\numberwithin{equation}{section}
\newtheorem{thm}{Theorem}[section]
  \theoremstyle{plain}
  \newtheorem{lem}[thm]{Lemma}
  \theoremstyle{plain}
  \theoremstyle{plain}
  \newtheorem{cor}[thm]{Corollary}
  \theoremstyle{remark}
  \newtheorem{rem}[thm]{Remark}
  \theoremstyle{definition}
  \newtheorem{defi}[thm]{Definition}
\newcommand{\R}{{\mathbb R}}
\newcommand{\eps}{\varepsilon}
\newcommand{\Int}{\displaystyle \int}
\newcommand{\Frac}{\displaystyle \frac}
\newcommand{\Sum}{\displaystyle \sum}
\newcommand{\Intrn}{\displaystyle \int}
\newcommand{\Lim}{\displaystyle \lim}
\newcommand{\Limsup}{\displaystyle \limsup}
\newcommand{\Liminf}{\displaystyle \liminf}
\newcommand{\Rn}{{\mathbb{R}^n}}
\title{Asymptotically linear fractional Schr\"odinger equations}
\author[R.\ Lehrer]{Raquel Lehrer}
\address{Centro de Ciencias Exatas e Tecnologicas 
\newline\indent 
CCET, Unioeste
\newline\indent
Cascavel-PR, Brazil}
\email{rlehrer@gmail.com}
\author[L.A.\ Maia]{Liliane A. Maia}
\address{Departamento de Matematica
\newline\indent 
Universidade de Brasilia
\newline\indent
Brasilia, Brazil}
\email{lilimaia.unb@gmail.com}
\author[M.\ Squassina]{Marco Squassina}
\address{Dipartimento di Informatica
\newline\indent
Universit\`a degli Studi di Verona
\newline\indent
Verona, Italy}
\email{marco.squassina@univr.it}
\subjclass[2000]{34A08, 35Q40, 58E05}
\keywords{Fractional laplacian, ground states, Poho\v zaev identity, critical point theory}
\thanks{The first author is supported by CCET/UNIOESTE and the second author is partially supported by CNPq/ PQ 306388/2011-1.
The third author is partially supported by 2009 Italian MIUR project:
   ``Variational and Topological Methods in the Study of Nonlinear Phenomena''.} 
\begin{document}

\begin{abstract}
By exploiting a variational technique based upon projecting over the 
Poho\v zaev manifold, we prove existence of positive solutions for a class of nonlinear 
fractional Schr\"odinger equations having a nonhomogenous nonautonomous
asymptotically linear nonlinearity.
\end{abstract}

\maketitle


\bigskip
\begin{center}
\begin{minipage}{10.5cm}
\footnotesize
\tableofcontents
\end{minipage}
\end{center}

\bigskip
\smallskip

\section{Introduction and main results}

In the last few years, the study of fractional equations applied to physically relevant situations as well as to many other areas of mathematics has steadily grown.\
In \cite{metkla1,metkla2}, the authors investigate the description of anomalous diffusion
via fractional dynamics and many fractional partial differential
equations are derived from L\'evy random walk models, extending
Brownian walk models in a natural way. In particular, in \cite{laskin} a fractional Schr\"odinger
equation was obtained, which extends to a L\'evy framework a classical result that path integral over Brownian trajectories leads to
the standard Schr\"odinger equation.  More precisely,
let $s\in (0,1]$, $n>2s$ and $\mathrm{i}$ be the imaginary unit.  Then the Schr\"odinger equation
involving the fractional laplacian $(-\Delta)^s$ is 
\begin{equation}
\label{Nonlinfracev}
 \mathrm{i} \partial_t u = (-\Delta)^s u - f(x,u),   \,\,\,\quad\text{in $(0,\infty)\times\R^n$,}
\end{equation}
where the fractional Laplace operator is defined \cite{DiNezza}, for a suitable constant $C(n,s)$, as
\[
(-\Delta)^s u(x)=C(n,s)\lim_{\eps\to 0^+}\int_{\Rn\setminus B_\eps(x)}\frac{u(x)-u(y)}{|x-y|^{n+2s}}dy.
\]
Though fractional Sobolev spaces are well known since the beginning of the last century, especially among harmonic analists, 
they have become very popular in the last few year, under the impulse of the work of Caffarelli and Silvestre \cite{CS},
see again \cite{DiNezza} and the reference within.
Looking for standing wave solutions $u(t,x)=e^{\mathrm{i} \lambda t} u(x)$ of \eqref{Nonlinfracev} and assuming that
the nonlinearity is of the form $f(x,s)=a(x)f(s)$, we are led to study the following fractional equation
\begin{equation}
 (-\Delta)^{s} u + \lambda u = a(x)f(u)  \quad \text{in $\Rn$},
 \label{problema}
\end{equation}
for $\lambda >0$, whose variational formulation (weak solution) is 
\begin{equation}
\int  (-\Delta)^{s/2} u (-\Delta)^{s/2}\varphi+ \lambda \int u\varphi = \int a(x)f(u)\varphi,  
\,\,\,\quad \text{for all $\varphi\in H^s(\R^n)$}.
\label{vfor-problema}
\end{equation}
\noindent
We shall assume that $f$ satisfies the following conditions:
\begin{itemize}
\item[(f1)] $f\in C^1(\mathbb{R},\mathbb{R}^{+})$,\,\,\, $f(s)=0$\,\,\,\text{for $s\leq 0$},\,\,\, $\Lim_{s\to 0^+}\Frac{f(s)}{s} = 0$;
\vspace{0.15cm}
\item[(f2)] $\Lim_{s \to +\infty} \Frac{f(s)}{s} = 1;$
\vspace{0.15cm}
\item[(f3)] if $F(s):=\Int_{0}^{s}f(t)dt$
and $Q(s) := \Frac{1}{2}f(s)s - F(s)$, then there exists $D\geq 1$ such that
$$
Q(s)\leq D Q(t), \quad \text{for all $s \in [0,t]$},\qquad
\Lim_{s\to +\infty}Q(s) = + \infty.
$$
\end{itemize}
\noindent
On the function $a:\Rn\to\R$, we will assume the following conditions:
\begin{itemize}
\item[(A1)] $a \in C^2(\Rn,\mathbb{R}^{+})$, \quad 
$\displaystyle\inf_{\Rn}a> 0$ ;
\vspace{0.15cm}
\item[(A2)] $\Lim_{|x|\to +\infty}a(x) = a_{\infty} > \lambda$\; ;
\vspace{0.15cm}
\item[(A3)] $\nabla a(x)\cdot x \geq 0, \;\text{for all}\;  x \in \Rn$, with strict inequality on a set of  positive measure;
\vspace{0.15cm}
\item[(A4)] $a(x) + \Frac{\nabla a(x)\cdot x}{n} < a_{\infty}, \; \text{for all}\; x \in \Rn$;
\vspace{0.15cm}
\item[(A5)] $\nabla a(x)\cdot x + \Frac{x\cdot {\mathcal H}_a(x)\cdot x}{n}\geq 0, \; \text{for all}\;  x\in \Rn$, being
${\mathcal H}_a$ the Hessian matrix of $a$.
\end{itemize}

 \vskip3pt
 \noindent
Now we can state our main results. Consider 
the energy functional $I:H^s(\Rn)\to\R$,
$$
I(u) := \Frac{1}{2}\Intrn |(-\Delta)^{s/2}u|^{2} +\frac{\lambda}{2}\int u^2- \Intrn a(x) F(u),
$$
naturally associated with equation \eqref{problema}.
Then, we have the following nonexistence result

\begin{thm} \label{nonexist}
Assume that $(A1)$-$(A5)$ and $(f1)$-$(f3)$ hold and consider
$$
\mathcal{P}:= \left\lbrace u\in H^s(\Rn)\setminus\left\lbrace 0 \right\rbrace :  
\Frac{n-2s}{2}\Intrn |(-\Delta)^{s/2}u|^{2} = n\Intrn \left(\Big(a(x)+ \Frac{\nabla a(x)\cdot x}{n}\Big)F(u) - \Frac{\lambda }{2}u^2\right)\right\rbrace.
$$
Then, the infimum
\begin{equation}
\label{p-def}
\inf_{u\in \mathcal{P}}I(u),
\end{equation}
is not a critical level of $I$ and the infimum is not achieved.
\end{thm}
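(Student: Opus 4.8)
The plan is to identify the value \eqref{p-def} with the ground state energy $c_\infty$ of the \emph{autonomous problem at infinity} $(-\Delta)^s u+\lambda u=a_\infty f(u)$, and then to show that it is approached along $\mathcal P$ but never attained. Denote by $I_\infty$ and $\mathcal P_\infty$ the energy and the Poho\v zaev manifold of that problem, obtained from $I$ and $\mathcal P$ by replacing $a(x)$ and $\tilde a(x):=a(x)+\tfrac1n\,\nabla a(x)\cdot x$ with the constant $a_\infty$. By (A2)--(A4), $\tilde a(x)\le a(x)<a_\infty$ on $\Rn$ and $\tilde a(x)\to a_\infty$ as $|x|\to\infty$ (since $0\le\tfrac1n\,\nabla a(x)\cdot x<a_\infty-a(x)\to0$), while under (f1)--(f3) and (A2) the autonomous equation admits a positive radial ground state $w\in\mathcal P_\infty$ with $I_\infty(w)=c_\infty>0$. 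The two workhorses are the fibering maps $\Phi_u(\tau):=I\big(u(\cdot/\tau)\big)$ and $\Psi_u(\tau):=I_\infty\big(u(\cdot/\tau)\big)$, $\tau>0$. Using the scaling $\|(-\Delta)^{s/2}u(\cdot/\tau)\|_2^2=\tau^{n-2s}\|(-\Delta)^{s/2}u\|_2^2$, one finds $\Phi_u'(\tau)=\tau^{n-1}h_u(\tau)$ with $h_u(\tau)=\tfrac{n-2s}{2\tau^{2s}}\|(-\Delta)^{s/2}u\|_2^2+\tfrac{n\lambda}{2}\|u\|_2^2-n\int\tilde a(\tau z)F(u)\,dz$, and $u(\cdot/\tau)\in\mathcal P$ exactly when $h_u(\tau)=0$ (analogously for $\Psi_u$ and $\mathcal P_\infty$, with $\tilde a$ replaced by $a_\infty$). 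Because $\tau\mapsto\tau^{-2s}$ is strictly decreasing and $\tfrac{d}{d\tau}\int\tilde a(\tau z)F(u)\,dz=\tfrac1\tau\int\big(\nabla\tilde a(\tau z)\cdot(\tau z)\big)F(u)\,dz\ge0$ — since $\nabla\tilde a(x)\cdot x=\tfrac1n\,\nabla a(x)\cdot x+\big(\nabla a(x)\cdot x+\tfrac1n\,x\cdot\mathcal H_a(x)\cdot x\big)\ge0$ by (A3) and (A5) — each $h_u$ is strictly decreasing with $h_u(0^+)=+\infty$. Consequently, if $u\in\mathcal P$ then $\tau=1$ is the unique zero of $h_u$ and the strict global maximum of $\Phi_u$, so $I(u)=\max_{\tau>0}\Phi_u(\tau)$.

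I first prove $I(u)>c_\infty$ for every $u\in\mathcal P$. Such a $u$ has $u^+\not\equiv0$, hence $\int F(u)>0$, and since $\tilde a<a_\infty$ the identity defining $\mathcal P$ yields $\tfrac{n-2s}{2}\|(-\Delta)^{s/2}u\|_2^2<n\int\big(a_\infty F(u)-\tfrac\lambda2 u^2\big)$; in particular $n\,a_\infty\int F(u)-\tfrac{n\lambda}{2}\|u\|_2^2>0$, so the autonomous analogue of $h_u$ decreases strictly from $+\infty$ to a negative limit and $\Psi_u$ is increasing then decreasing, attaining its strict maximum at the unique $\sigma_u\in(0,1)$ for which $u(\cdot/\sigma_u)\in\mathcal P_\infty$; hence $\Psi_u(\sigma_u)\ge c_\infty$. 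On the other hand $a(x)<a_\infty$ on $\Rn$ gives $\Phi_u(\tau)-\Psi_u(\tau)=\tau^n\int\big(a_\infty-a(\tau z)\big)F(u)\,dz>0$ for all $\tau>0$. Combining, $I(u)=\max_{\tau>0}\Phi_u(\tau)\ge\Phi_u(\sigma_u)>\Psi_u(\sigma_u)\ge c_\infty$.

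For the reverse inequality I translate the autonomous ground state, $u_y:=w(\cdot-y)$, and project it onto $\mathcal P$ via the dilation above. For fixed $\tau$, dominated convergence gives $h_{u_y}(\tau)\to h_\infty(\tau)$ as $|y|\to\infty$, where $h_\infty$ is the autonomous fibering derivative of $w$; as $h_\infty$ is strictly decreasing with unique zero $\tau=1$, for $|y|$ large $h_{u_y}$ has a zero $\tau(y)$ and $\tau(y)\to1$. Then $u_y(\cdot/\tau(y))\in\mathcal P$ and, again by dominated convergence ($a$ bounded, $a\big(\tau(y)(z+y)\big)\to a_\infty$), $I\big(u_y(\cdot/\tau(y))\big)=\Phi_{u_y}(\tau(y))\to I_\infty(w)=c_\infty$. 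Thus $\inf_{\mathcal P}I\le c_\infty$, so $\inf_{\mathcal P}I=c_\infty$, and this infimum is not attained since $I>c_\infty$ strictly on $\mathcal P$. Finally, a nonzero critical point $v$ of $I$ satisfies the Poho\v zaev identity proved earlier, hence $v\in\mathcal P$ and $I(v)>c_\infty$; as also $I(0)=0<c_\infty$, the level $c_\infty$ is not a critical level of $I$.

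The main obstacle is turning the comparison $I(u)>c_\infty$ into a rigorous argument, i.e.\ establishing the unimodality of the two fibering maps so that the energy of a point of $\mathcal P$ (resp.\ $\mathcal P_\infty$) equals the maximum of $I$ (resp.\ $I_\infty$) along its dilation orbit; this is precisely where the structural hypotheses (A3), (A5) on $a$ (monotonicity of $\tau\mapsto\int\tilde a(\tau\cdot)F(u)$) and (f1)--(f3) on $f$ (existence of the autonomous ground state, and $c_\infty>0$) are used. A secondary difficulty is controlling the projection parameter $\tau(y)$ and justifying $\tau(y)\to1$ as $|y|\to\infty$ — where $\tilde a$ is neither constant nor monotone — and checking that $w$ decays fast enough for all the limits above to pass; I expect these last points to account for most of the technical work.
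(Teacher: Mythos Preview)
Your proposal is correct and follows essentially the same route as the paper: identify $p:=\inf_{\mathcal P}I$ with $c_\infty$ by (i) projecting each $u\in\mathcal P$ onto $\mathcal P_\infty$ via a dilation with parameter $\sigma_u<1$ to get $I(u)>c_\infty$, and (ii) translating the autonomous ground state $w$ and projecting onto $\mathcal P$ with $\tau(y)\to1$ to get $p\le c_\infty$; then conclude that the level is not critical since any nonzero critical point lies in $\mathcal P$ by the Poho\v zaev identity. The only organizational difference is that the paper deduces ``infimum not achieved'' from ``level not critical'' together with the fact that $\mathcal P$ is a \emph{natural constraint} (a constrained minimizer would be a free critical point), whereas you obtain ``not achieved'' directly from the strict inequality $I(u)>c_\infty$ on $\mathcal P$; your shortcut is valid and avoids that extra lemma.
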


\noindent
Consider now also the limiting problem
\begin{equation}
\label{lim-intro}
 (-\Delta)^{s} u + \lambda u = a_\infty f(u)  \quad \text{in $\Rn$}.
\end{equation}
We shall denote by $I_\infty:H^s(\Rn)\to\R$,
$$
I_{\infty}(u) := \Frac{1}{2}\Intrn |(-\Delta)^{s/2}u|^{2} +\frac{\lambda}{2}\int u^2- \Intrn a_\infty F(u),
$$
its associated energy functional.
In Section~\ref{autonom} we shall
discuss some properties of least energy critical values of this functional.
In passing, we observe that by combining the results of \cite{lopes} (see e.g.\ Theorem 4.1 therein) with an adaptation of \cite[(i) of Lemma 1]{BJM} 
to the fractional framework, it is possible to prove that {\em any} least energy solution to \eqref{lim-intro} is radially symmetric 
and decreasing and of fixed sign.
\vskip3pt
\noindent
We have the following existence result

\begin{thm} \label{exist}
Assume that $(A1)$-$(A5)$, $(f1)$-$(f3)$ hold and
that the following facts hold
\begin{enumerate}
\item $f\in C^1(\R)\cap {\rm Lip}(\mathbb{R},\mathbb{R}^{+})$ and there exists $\tau>0$ such that
$\lim\limits_{s\to 0^+} \frac{f'(s)}{s^\tau}=0;$
\item $\|a_\infty - a\|_{L^\infty}$ is sufficiently small; 
\item the least energy level $c_\infty$ of \eqref{lim-intro}
is an  isolated radial critical level for $I_\infty$ or equation \eqref{lim-intro} 
admits a unique positive solution which is radially symmetric about some point. 
\end{enumerate}
Then the nonautonomous problem 
\begin{equation*}
 (-\Delta)^{s} u + \lambda u = a(x)f(u)  \quad \text{in $\Rn$},
\end{equation*}
admits a nontrivial nonnegative solution $u \in H^s(\Rn)$.
\end{thm}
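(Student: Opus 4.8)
\emph{Strategy.} By Theorem~\ref{nonexist} the direct minimization of $I$ over the Poho\v zaev manifold cannot produce a solution, so the plan is instead to locate a solution at a \emph{strictly higher} min-max level $\bar c$, built from translates of the ground state of the limiting problem \eqref{lim-intro} and confined to the ``compactness window'' $c_\infty<\bar c<2c_\infty$. The first step is the projection onto $\mathcal P$ by dilations. Let
\[
\mathcal C:=\Big\{u\in H^s(\Rn):\ u\ge0,\ \int\Big[\Big(a(x)+\frac{\nabla a(x)\cdot x}{n}\Big)F(u)-\frac{\lambda}{2}u^2\Big]>0\Big\},
\]
and for $u\in\mathcal C$ put $u_t(x):=u(x/t)$. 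Then $t\mapsto I(u_t)$ is positive for $t$ small (since $n-2s<n$, the kinetic term dominates), tends to $-\infty$ as $t\to+\infty$ (by $(f1)$--$(f2)$ and $(A4)$, which give $\frac{\lambda}{2}\int u^2<a_\infty\int F(u)$ on $\mathcal C$), and by $(f3)$ together with $(A5)$ has a \emph{unique} critical point $t(u)>0$, which is its global maximum and satisfies $u_{t(u)}\in\mathcal P$; moreover $u\mapsto t(u)$ is continuous and $\mathcal P\cap\mathcal C$ is a $C^1$ natural constraint for $I$. The same construction for $I_\infty$ and the discussion of Section~\ref{autonom} give a ground state $\omega$ of \eqref{lim-intro} (radial, positive, decreasing, $I_\infty(\omega)=c_\infty$) which is its own projection. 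Since $a\le a_\infty$ (from $(A2)$--$(A3)$) and $F\ge0$ yield $I\ge I_\infty$ pointwise, one has $c:=\inf_{\mathcal P\cap\mathcal C}I=\inf_{u\in\mathcal C}\max_{t>0}I(u_t)\ge\inf_u\max_t I_\infty(u_t)=c_\infty>0$; and since $a<a_\infty$ on a set of positive measure, $I(\omega)=c_\infty+\int(a_\infty-a)F(\omega)>c_\infty$.

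\emph{The min-max level and the bound $\bar c<2c_\infty$.} For $R>0$ large set $\Phi_R:\overline{B_R^n}\to\mathcal P\cap\mathcal C$, $\Phi_R(y):=\big(\omega(\cdot-y)\big)_{t(\omega(\cdot-y))}$, and define
\[
\bar c:=\inf_{\gamma\in\Gamma}\ \max_{y\in\overline{B_R^n}}I(\gamma(y)),\qquad
\Gamma:=\big\{\gamma\in C\big(\overline{B_R^n},\mathcal P\cap\mathcal C\big):\ \gamma=\Phi_R\ \text{on}\ \partial B_R^n\big\}.
\]
Here hypothesis~$(2)$ is used: when $\|a_\infty-a\|_{L^\infty}$ is small, $a+\frac{\nabla a\cdot x}{n}$ is uniformly close to $a_\infty$ by $(A3)$--$(A4)$, so $\Phi_R$ is well defined and the parameters $t(\omega(\cdot-y))$ stay in a fixed bounded interval; writing $I=I_\infty+\int(a_\infty-a)F(\cdot)$ then gives $I(\Phi_R(y))\le c_\infty+\|a_\infty-a\|_{L^\infty}\big(\sup_y t(\omega(\cdot-y))\big)^{n}\int F(\omega)<2c_\infty$. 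Since $\Phi_R\in\Gamma$, $\bar c\le\max_yI(\Phi_R(y))<2c_\infty$.

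\emph{The strict lower bound $\bar c>c_\infty$ (main obstacle).} This is where the real work lies. Arguing by contradiction, assume $\bar c\le c_\infty$ and choose, for each small $\delta>0$, some $\gamma\in\Gamma$ with $\max_yI(\gamma(y))\le c_\infty+\delta$. A splitting lemma near the level $c_\infty$ --- for whose proof one uses the uniform regularity and decay of near-ground states provided by hypothesis~$(1)$ and the isolatedness of $c_\infty$ provided by hypothesis~$(3)$ --- shows that $\mathrm{dist}_{H^s}\big(u,\{\omega(\cdot-z):z\in\Rn\}\big)\to0$, uniformly over $u\in\mathcal P\cap\mathcal C$ with $I(u)\le c_\infty+\delta$, as $\delta\to0$. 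Consequently a barycenter map $\beta$ is well defined and continuous on this sublevel set and satisfies $\beta(\omega(\cdot-z))=z$; then $\beta\circ\gamma:\overline{B_R^n}\to\Rn$ is continuous, coincides on $\partial B_R^n$ with $y\mapsto y$, hence has nonzero Brouwer degree on $B_R^n$ and so vanishes at some $y_0$. Thus $\gamma(y_0)$ is $H^s$-close to $\omega$ itself, whence $I(\gamma(y_0))$ is close to $I(\omega)$, which by the \emph{strict} inequality $(A4)$ (equivalently, $\int(a_\infty-a)F(\omega)>0$) exceeds $c_\infty$ by a fixed amount independent of $\delta$ --- a contradiction with $I(\gamma(y_0))\le c_\infty+\delta$ once $\delta$ is small. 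Hence $c_\infty<\bar c<2c_\infty$.

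\emph{Compactness and conclusion.} By the deformation/min-max principle on the $C^1$ manifold $\mathcal P\cap\mathcal C$ (the class $\Gamma$ being stable under the relevant deformations), $\bar c$ carries a Cerami sequence $(u_k)$ for $I$; the superlinearity-type control in $(f3)$ makes it bounded, and it may be taken nonnegative. A profile decomposition --- the fractional counterpart of the global compactness lemma --- gives, along a subsequence, $u_k=u_\infty+\sum_{j=1}^{m}\omega_j(\cdot-y_k^j)+o(1)$ in $H^s(\Rn)$ with $u_\infty$ a solution of \eqref{problema}, each $\omega_j$ a nontrivial nonnegative solution of \eqref{lim-intro}, $|y_k^j|\to+\infty$, and $\bar c=I(u_\infty)+\sum_{j=1}^{m}I_\infty(\omega_j)$. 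Every nontrivial solution lies on the associated Poho\v zaev manifold, so $I_\infty(\omega_j)\ge c_\infty$, while $I(u_\infty)\ge c\ge c_\infty$ if $u_\infty\ne0$; furthermore hypothesis~$(3)$ forces each nonnegative $\omega_j$ to be a ground state, $I_\infty(\omega_j)=c_\infty$. If $m\ge1$ these relations yield $\bar c\ge 2c_\infty$ (when $u_\infty\ne0$ or $m\ge2$) or $\bar c=c_\infty$ (when $u_\infty=0$ and $m=1$), both ruled out by $c_\infty<\bar c<2c_\infty$. Therefore $m=0$, $u_k\to u_\infty$ strongly in $H^s(\Rn)$, and $I(u_\infty)=\bar c>0$, so $u_\infty\ne0$. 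Finally, testing \eqref{problema} with $u_\infty^-$ and using $f\equiv0$ on $(-\infty,0]$ together with $\int(-\Delta)^{s/2}u_\infty\,(-\Delta)^{s/2}u_\infty^-\le-\|(-\Delta)^{s/2}u_\infty^-\|_{2}^{2}$ gives $u_\infty\ge0$; by the regularity from $(1)$ the strong maximum principle applies, so in fact $u_\infty>0$, and $u_\infty\in H^s(\Rn)$ is the desired nontrivial nonnegative (indeed positive) solution.
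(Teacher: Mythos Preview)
Your overall architecture---projecting translates of the ground state $\omega$ onto $\mathcal P$, setting up a linking/min-max level via a barycenter map, and closing with a profile decomposition---matches the paper's. But there is a genuine gap in your compactness step under the \emph{first} alternative of hypothesis~(3) (that $c_\infty$ is merely an \emph{isolated} radial critical level of $I_\infty$). You assert that ``hypothesis (3) forces each nonnegative $\omega_j$ to be a ground state, $I_\infty(\omega_j)=c_\infty$.'' That is only true under the uniqueness alternative. Under the isolatedness alternative, nothing prevents the limiting equation from having a positive radial solution $\omega_1$ with $I_\infty(\omega_1)\in(c_\infty,2c_\infty)$; then the splitting scenario $m=1$, $\bar u=0$ gives $\bar c=I_\infty(\omega_1)$, which sits squarely inside your window $(c_\infty,2c_\infty)$ and cannot be ruled out by your bounds. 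The paper repairs this by introducing $c_\sharp:=\inf\{c>c_\infty:\ c\text{ is a radial critical value of }I_\infty\}>c_\infty$ and proving Cerami compactness on the \emph{smaller} interval $(c_\infty,\min\{c_\sharp,2c_\infty\})$; correspondingly the smallness condition on $\|a_\infty-a\|_{L^\infty}$ is calibrated so that $I(\Pi[y])<\min\{c_\sharp,2c_\infty\}$ for all $y$, not just $<2c_\infty$. Your upper-bound computation needs this sharper target.

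A second point concerns your lower bound $\bar c>c_\infty$. The claim that every $u\in\mathcal P$ with $I(u)\le c_\infty+\delta$ is uniformly $H^s$-close to a translate of $\omega$ does not follow directly from a ``splitting lemma'': splitting lemmas apply to Palais--Smale/Cerami sequences, not to arbitrary low-energy points of $\mathcal P$. The paper obtains the analogous strict inequality (for $b:=\inf\{I(u):u\in\mathcal P,\ \beta(u)=0\}$) by first applying Ekeland's principle on $\mathcal P$ to upgrade a minimizing sequence to a nearby sequence with $I'|_{\mathcal P}\to 0$, then showing this forces $I'\to 0$ in $H^{-s}$ (using the Lipschitz bound on $f$ from hypothesis~(1)), and only then invoking the splitting corollary at level $c_\infty$. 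Your contradiction argument can be salvaged along these lines, but as written the key concentration step is asserted rather than proved.
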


\noindent
These results extend the corresponding results in \cite{raqlili} to the fractional case.
The framework employed and ideas of the proofs of our main results follow closely those found in \cite{raqlili}.
However, the nonlocal character of the fractional laplacian requires to overcome several additional difficulties.
\vskip2pt
\noindent
Theorem~\ref{exist} follows 
under uniqueness of positive radial solutions of \eqref{lim-intro} or isolatedness
assumption on the least energy level of $I_\infty$. 
To our knowledge, in the case $s\in (0,1)$,
the isolatedness or uniqueness assumption of Theorem~\ref{exist} are unknown
in the current literature. In the case $s=1$, it follows for instance by the uniqueness
result by Serrin-Tang \cite{serrin}, under suitable assumptions of $f(s)$ for large values of $s$,
which are compatible with the model nonlinearity 
\begin{equation}
\label{model-nonlin}
f(s)=\frac{s^3}{1+s^2},\quad \text{for $s\geq 0$},\qquad f(s)=0,\quad\text{for $s\leq 0$}.
\end{equation}
In fact, assumptions (H1)-(H2) in \cite[Theorem 1]{serrin} are fulfilled with 
$b=(\lambda/(a_\infty-\lambda))^{1/2}>0,$ where $a_\infty>\lambda.$
For the case of superquadratic nonlinearities $f(s)=s^p$, nondegeneracy and uniqueness properties of ground state solutions of \eqref{lim-intro}
where recently proved in \cite{frank,frank2}, so assumption (3) of Theorem~\ref{exist}
is expected to be fulfilled.
Semi-linear Schr\"odinger equations associated with the asymptotically linear model 
nonlinearity \eqref{model-nonlin} are one of the main motivations 
for developing the technique in this paper.
For the physical background in the local case $s=1$, see \cite{stuart1,stuart2}.
\vskip2pt
\noindent
In \cite{chang}, the author considers asymptotically linear 
fractional NLS with an external potential $V$ which provides compactness directly
via coercivity.
We also refer the reader to the contributions \cite{ChangWang,felmer} where the case of a 
superquadratic nonlinearity is covered for the fractional laplacian obtaining
existence, regularity and qualitative properties of solutions. In the superquadratic case, as known,
one can also exploit the Nehari manifold associated with the problem. On the other hand, when
the nonlinear term is nonhomogeneous and asymptotically linear, as it was pointed out by Costa and Tehrani
in\cite{costa}, in general, not every nonzero function can be projected onto the Nehari manifold 
or it may happen that the projection is not uniquely determined. In turn, as exploitied in other 
contributions \cite{azz,jeanta2,raqlili}, we shall look at projections onto the Poho\v zaev manifold in place of the Nehari constraint in order
to prove Theorem~\ref{nonexist} and \ref{exist}.
\vskip6pt
\noindent
{\bf A few additional remarks.}
Conditions $(A2)$, $(A3)$ and $(A4)$ imply
\begin{equation} \label{antigaA5}
\nabla a(x)\cdot x \rightarrow 0, \quad \text{if } \quad|x|\to+\infty,
\end{equation}
while $(f1)$ and $(f2)$ imply that, given $\varepsilon > 0$ and $2<p\leq 2n/(n-2s)$, 
there exists $C_\varepsilon>0$ with
\begin{equation}
|F(s)|\leq \Frac{\varepsilon}{2}|s|^2 + C_\eps |s|^p,\qquad\text{for all $s\in \R$.}
\label{condiF2}
\end{equation}
In what follows we will denote 
\begin{equation}
\label{Hsnorm}
\Vert u \Vert_{H^s}= \Big(\Intrn |(-\Delta)^{s/2}u|^{2}+ \Intrn\lambda u^2\Big)^{1/2},
\end{equation} 
as the norm in $H^s(\Rn)$, which is equivalent to the standard norm of $H^{s}(\Rn)$.
We will also denote by $\Vert u \Vert_{p}$ de usual norm of $L^{p}(\Rn)$.
We define $I: H^{s}(\Rn) \rightarrow \mathbb{R}$ as the functional associated with \eqref{problema}
$$
I(u):= \Frac{1}{2}\Intrn |(-\Delta)^{s/2}u|^{2}- \Intrn G(x,u),
\qquad
G(x,u) :=a(x)F(u) - \frac{\lambda}{2} u^2.
$$
Since $f(s)=0$ on $\R^-$, it follows that any weak solution $u\in H^s(\R^n)$ for \eqref{problema}
is nonnegative. In fact, by choosing $\varphi=u^-\in H^s(\Rn)$ in the variational formulation \eqref{vfor-problema} yields
\begin{equation*}
\int  (-\Delta)^{s/2} u (-\Delta)^{s/2} u^- = \int a(x)f(u)u^-- \lambda \int u u^-= \lambda \int (u^-)^2.
\end{equation*}
Hence, if $C(n,s)$ is the normalization constant in the definition of $(-\Delta)^s$, we obtain
\begin{align}
\label{conput-pos}
& \int  (-\Delta)^{s/2} u (-\Delta)^{s/2} u^-  =\int  u^- (-\Delta)^s u^+ -\|(-\Delta)^{s/2} u^-\|_2^2 \\ 
& =\frac{C(n,s)}{2}\iint  \frac{(u^+(x)-u^+(y))(u^-(x)-u^-(y))}{|x-y|^{n+2s}}dxdy-\|(-\Delta)^{s/2} u^-\|_2^2   \notag \\
& =-C(n,s)\iint  \frac{u^+(x)u^-(y)}{|x-y|^{n+2s}}dxdy-\|(-\Delta)^{s/2} u^-\|_2^2\leq -\|(-\Delta)^{s/2} u^-\|_2^2.  \notag
\end{align}
In turn we get $\|u^-\|_{H^s}^2=\|u^-\|_2^2+\|(-\Delta)^{s/2} u^-\|_2^2=0$, 
namely $u^-=0$, hence the assertion.

\section{Energy levels of the limiting problem}
\label{autonom}
\noindent
In this section we study the following equation for $s\in (0,1)$ and $n>2s$,
\begin{equation}
(-\Delta)^{s} u + \lambda u = a_{\infty}f(u),\qquad \text{in $\R^n$},
\label{auto}
\end{equation}
where $\lambda>0$ and $a_{\infty} >\lambda$. 
We shall assume that $F$ satisfies the growth estimate \eqref{condiF2}.
Our aim is to provide a Mountain Pass 
characterization for least energy solutions which is the counterpart of the main result
of \cite{jeanta2}. 
Let the Hilbert space $H^s(\R^n)$ be endowed with the norm \eqref{Hsnorm}
and let $I_{\infty}: H^{s}(\Rn) \to\R$ be the functional corresponding to \eqref{auto}, namely
$$
I_{\infty}(u) = \Frac{1}{2}\Intrn |(-\Delta)^{s/2}u|^{2} - \Intrn G_{\infty}(u),
$$
where we have set
$$
G_{\infty}(u) =\displaystyle\int_{0}^{u} g_\infty(t)dt=
\displaystyle\int_{0}^{u} (a_{\infty}f(t) - \lambda t )dt=a_\infty F(u)-
\frac{\lambda}{2}u^2.
$$
We say that a solution $u$ of $\eqref{auto}$ is 
a \textit{least energy solution} to \eqref{auto} if 
$$
I_{\infty}(u) = m,\qquad
m := \inf\big\{I_{\infty}(u): \text{$u\in H^{s}(\Rn)\setminus \left\lbrace 0 \right\rbrace$ is a solution of $\eqref{auto}$}\big\}.
$$
As stated in \cite[Theorem 1.1 ]{R-OS}, the Poho\v zaev identity associated with 
\eqref{auto} is given by 
\begin{equation*}
(n-2s)\Intrn ug_\infty(u)  = 2n \Intrn G_{\infty}(u),
\end{equation*}
where $g_\infty$ and $G_{\infty}$ are defined as before.
Also, if $u$ and $v$ belong to $H^{s}(\Rn)$, then
$$
\Intrn v(-\Delta)^{s} u  = \Intrn (-\Delta)^{s/2}u (-\Delta)^{s/2}v ,
$$
which yields in turn
$$
\Intrn ug_\infty(u) = \Intrn |(-\Delta)^{s/2}u|^{2}=\Frac{C(n,s)}{2}[u]^{2}_{H^{s}},
\qquad
[u]_{H^{s}}:=\Big(\iint \Frac{|u(x)- u(y)|^{2}}{|x-y|^{n+2s}}dxdy\Big)^{1/2}.
$$
Therefore, the Poho\v zaev identity may be written 
as (see also \cite[Proposition 4.1]{ChangWang} for a different proof)
\begin{equation}
(n-2s)\Intrn |(-\Delta)^{s/2}u|^{2} = 2n\Intrn G_{\infty}(u).
\label{pohozaevinf}
\end{equation}
For the following, it is convenient to introduce the set 
$$
\mathcal{P}_{\infty}:= \big\{u\in H^{s}(\Rn)\setminus \left\lbrace 0 \right\rbrace: \text{$u$  satisfies identity \eqref{pohozaevinf}}\big\}.
$$
We also consider we set of paths
$$
\Gamma_{\infty}:=\big\{\gamma\in C([0,1],H^s(\Rn)):\, \gamma(0)=0,\,\, I_{\infty}(\gamma(1))<0\big\},
$$ 
and define the min-max Mountain Pass level (see \cite{ambrorabino})
\begin{equation}
\label{cinftydef}
c_{\infty} := \displaystyle\min_{\gamma\in \Gamma_{\infty}}\max_{t\in[0,1]}I_{\infty}(\gamma(t)).
\end{equation}

\noindent
The main result of the section is the following
\begin{thm} \label{cim}
$c_{\infty}=m$.
\end{thm}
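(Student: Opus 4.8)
The plan is to prove the two inequalities $c_\infty\le m$ and $m\le c_\infty$ separately, following the scheme of \cite{jeanta2}, the second being the substantial one. For $c_\infty\le m$, let $u$ be any nontrivial solution of \eqref{auto}; by the Pohozaev identity \eqref{pohozaevinf} it belongs to $\mathcal P_\infty$, and a change of variables in the dilation path $u_t:=u(\cdot/t)$ ($t>0$), $u_0:=0$, gives
\[
I_\infty(u_t)=\Frac{t^{n-2s}}{2}\Intrn|(-\Delta)^{s/2}u|^{2}-t^{n}\Intrn G_\infty(u),
\]
where \eqref{pohozaevinf} reads $\Intrn G_\infty(u)=\Frac{n-2s}{2n}\Intrn|(-\Delta)^{s/2}u|^{2}>0$. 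Hence $t\mapsto I_\infty(u_t)$ starts at $0$, is positive for small $t$, tends to $-\infty$ as $t\to+\infty$, and has $t=1$ as its unique critical point, which is therefore a global maximum with value $I_\infty(u)$; since moreover $\Vert u_t\Vert_{H^s}\to0$ as $t\to0^+$, a reparametrisation $\theta\mapsto u_{\theta T}$ on $[0,1]$ with $T$ large (so that $I_\infty(u_T)<0$) produces a path in $\Gamma_\infty$. Thus $c_\infty\le\max_{t>0}I_\infty(u_t)=I_\infty(u)$, and taking the infimum over all nontrivial solutions yields $c_\infty\le m$.

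For $m\le c_\infty$, I would first verify that $I_\infty$ has the Mountain Pass geometry: $I_\infty(0)=0$; by \eqref{condiF2} with $\eps$ small enough there are $\rho,\alpha>0$ with $I_\infty\ge\alpha$ on $\Vert u\Vert_{H^s}=\rho$; and, since $(f2)$ forces $F(s)/s^2\to\Frac12$, a sufficiently dilated profile $\phi_R:=\phi(\cdot/R)$ — chosen so that $\Vert(-\Delta)^{s/2}\phi_R\Vert_2^2<(a_\infty-\lambda)\Vert\phi_R\Vert_2^2$, possible for large $R$ since this ratio scales like $R^{-2s}$ — makes $I_\infty(t\phi_R)\to-\infty$ as $t\to+\infty$. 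Hence $\Gamma_\infty\neq\emptyset$, $c_\infty\ge\alpha>0$, and the Mountain Pass theorem furnishes $(u_k)\subset H^s(\Rn)$ with $I_\infty(u_k)\to c_\infty$ and $I_\infty'(u_k)\to0$. The main obstacle is then the boundedness of $(u_k)$ in $H^s(\Rn)$: asymptotic linearity rules out the Ambrosetti--Rabinowitz device, and one must exploit $(f3)$ through the identity
\[
a_\infty\Intrn Q(u_k)=I_\infty(u_k)-\Frac12\langle I_\infty'(u_k),u_k\rangle,
\]
together with the monotonicity bound $Q(s)\le DQ(t)$, the coercivity $Q(s)\to+\infty$, and a vanishing/non-vanishing analysis of the normalised sequence $u_k/\Vert u_k\Vert_{H^s}$, to argue by contradiction that $\Vert u_k\Vert_{H^s}\not\to+\infty$.

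Once $(u_k)$ is bounded, I would exclude vanishing: if $\sup_{y\in\Rn}\int_{B_1(y)}u_k^2\to0$ then $u_k\to0$ in $L^p(\Rn)$ for $p\in(2,2n/(n-2s))$, and inserting this into the $L^p$-estimates for $\Intrn f(u_k)u_k$ and $\Intrn F(u_k)$ together with $\langle I_\infty'(u_k),u_k\rangle\to0$ would force $\Vert u_k\Vert_{H^s}\to0$, contradicting $c_\infty>0$; hence there are $y_k\in\Rn$ with $\tilde u_k:=u_k(\cdot+y_k)\rightharpoonup u\neq0$ in $H^s(\Rn)$, and by translation invariance $(\tilde u_k)$ is still Palais--Smale at level $c_\infty$. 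Passing to the limit in $\langle I_\infty'(\tilde u_k),\varphi\rangle$ for $\varphi\in C_c^\infty(\Rn)$ — the delicate points being the weak continuity of $\Intrn(-\Delta)^{s/2}\tilde u_k(-\Delta)^{s/2}\varphi$ and the local strong convergence used to treat $\Intrn f(\tilde u_k)\varphi$ — shows that $u$ is a nontrivial weak solution of \eqref{auto}. Finally, since $(f3)$ implies $Q\ge0$ (because $Q(0)=0\le DQ(t)$ for all $t\ge0$), Fatou's lemma gives
\[
I_\infty(u)=I_\infty(u)-\Frac12\langle I_\infty'(u),u\rangle=a_\infty\Intrn Q(u)\le\liminf_{k}a_\infty\Intrn Q(\tilde u_k)=c_\infty,
\]
so $m\le I_\infty(u)\le c_\infty$, which combined with the first part gives $c_\infty=m$ (and exhibits $u$ as a least energy solution).
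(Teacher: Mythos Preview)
Your proof of $c_\infty\le m$ coincides with the paper's Lemma~\ref{first-lem}. For $m\le c_\infty$, however, you take a substantially different route. The paper argues geometrically: it shows (Lemma~\ref{pre-lem1}) that every path $\gamma\in\Gamma_\infty$ must cross the Poho\v zaev manifold $\mathcal P_\infty$, and separately (Lemma~\ref{pre-lem2}) that $m=\inf_{\mathcal P_\infty}I_\infty$ via a scaling bijection between $\mathcal P_\infty$ and the constraint set $\{u:\int G_\infty(u)=1\}$; combining these two facts gives $c_\infty\ge m$ in one line, without ever constructing a critical point at level $c_\infty$. You instead run the full concentration--compactness machinery to produce a nontrivial solution $u$ with $I_\infty(u)\le c_\infty$. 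Your approach is valid and has the merit of being self-contained---it yields existence of a least energy solution as a byproduct, whereas the paper imports this from \cite{ChangWang}---but it is considerably longer, and the boundedness step needs more care than your sketch indicates: the standard Mountain Pass theorem delivers only a Palais--Smale sequence, yet your intended contradiction in the non-vanishing case for $\hat u_k$ (via $a_\infty\int Q(u_k)=I_\infty(u_k)-\tfrac12\langle I_\infty'(u_k),u_k\rangle$ bounded) requires $I_\infty'(u_k)u_k\to0$, i.e.\ a Cerami-type sequence. This is fixable (e.g.\ via Ghoussoub--Preiss, exactly as the paper does later in Lemma~\ref{ceramip4}), but you should say so explicitly. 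The paper's Poho\v zaev-manifold argument sidesteps this issue entirely by never touching Palais--Smale sequences for the autonomous problem.
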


\noindent
In order to prove the result we need the following Lemmas.

\begin{lem}
\label{first-lem}
Let $w \in H^{s}(\Rn)$ be a least energy solution to \eqref{auto}. Then
there exists $\gamma \in \Gamma_{\infty}$ such that 
$$
w\in \gamma([0,1]),\qquad 
\max_{t\in[0,1]}I_{\infty}(\gamma(t)) = I_{\infty}(w) = m.
$$
\end{lem}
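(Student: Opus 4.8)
I would realize the required path as a curve of dilations of $w$. For $t>0$ put $w_t(x):=w(x/t)$ and set $w_0:=0$. The first step is to record the two scaling identities
$$
\Intrn |(-\Delta)^{s/2}w_t|^2 = t^{n-2s}\Intrn |(-\Delta)^{s/2}w|^2,
\qquad
\Intrn G_\infty(w_t)=t^n\Intrn G_\infty(w),
$$
which follow from $(-\Delta)^{s/2}\big(w(\cdot/t)\big)(x)=t^{-s}\big((-\Delta)^{s/2}w\big)(x/t)$ and the change of variables $y=x/t$. Writing $A:=\Intrn |(-\Delta)^{s/2}w|^2$ and $B:=\Intrn G_\infty(w)$, one has $A>0$ since $w\neq 0$, and, because $w$ solves \eqref{auto}, it lies on $\mathcal P_\infty$, so the Poho\v zaev identity \eqref{pohozaevinf} yields $B=\frac{n-2s}{2n}A>0$. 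Hence
$$
\varphi(t):=I_\infty(w_t)=\frac{A}{2}\Big(t^{n-2s}-\frac{n-2s}{n}t^{n}\Big),\qquad t\ge 0 .
$$

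\textbf{One‑variable analysis.} Next I would study $\varphi$. From $\varphi'(t)=\frac{A}{2}(n-2s)t^{n-2s-1}(1-t^{2s})$ one sees that $\varphi$ increases strictly on $(0,1)$ and decreases strictly on $(1,\infty)$, so it attains a strict global maximum over $[0,\infty)$ at $t=1$, with $\varphi(1)=I_\infty(w)=m$; moreover $\varphi(0)=0$ and $\varphi(t)\to-\infty$ as $t\to+\infty$. Fix then $T>1$ with $\varphi(T)<0$.

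\textbf{Construction of the path.} I would define $\gamma:[0,1]\to H^s(\Rn)$ by $\gamma(t):=w_{tT}$. Continuity on $(0,1]$ is immediate from $\|w_{tT}\|_{H^s}^2=(tT)^{n-2s}\|(-\Delta)^{s/2}w\|_2^2+\lambda(tT)^n\|w\|_2^2$ together with strong continuity of the dilation group on $L^2(\Rn)$ and on the Gagliardo seminorm; the same formula forces $\|\gamma(t)\|_{H^s}\to 0$ as $t\to 0^+$, giving continuity at $0$ with $\gamma(0)=0$. Since $\gamma(1)=w_T$ and $\varphi(T)<0$, we get $\gamma\in\Gamma_\infty$; since $\gamma(1/T)=w_1=w$ with $1/T\in(0,1)$, we get $w\in\gamma([0,1])$; and finally $\max_{t\in[0,1]}I_\infty(\gamma(t))=\max_{t\in[0,1]}\varphi(tT)=\varphi(1)=m$, because as $t$ runs over $[0,1]$ the argument $tT$ sweeps $[0,T]\ni 1$ and $\varphi$ peaks precisely at $1$. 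This gives all the asserted properties.

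\textbf{Main obstacle.} The argument is essentially the calculus of the single variable function $\varphi$, whose shape is forced by the Poho\v zaev identity; the only points genuinely requiring the nonlocal setting are the two fractional scaling identities and the continuity of the dilation curve up to $t=0$ in the \emph{full} $H^s$-norm. Both are controlled by the homogeneity $[w(\cdot/t)]_{H^s}^2=t^{n-2s}[w]_{H^s}^2$ of the Gagliardo seminorm (recall $\Intrn |(-\Delta)^{s/2}u|^2=\frac{C(n,s)}{2}[u]_{H^s}^2$), so no substantial new difficulty arises beyond keeping track of these homogeneities.
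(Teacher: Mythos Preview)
Your argument is correct and follows essentially the same route as the paper: construct the path by dilations $t\mapsto w(\cdot/t)$, use the scaling identities and the Poho\v zaev relation \eqref{pohozaevinf} to reduce to a one-variable analysis with unique maximum at $t=1$, then reparametrize by a large factor $T$ (the paper's $L$) to obtain $\gamma\in\Gamma_\infty$. Your write-up is in fact slightly more explicit than the paper's, in that you justify the continuity of the dilation curve at $t=0$ in the full $H^s$-norm, a point the paper states without comment.
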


\begin{proof}
Consider a least energy solution $w$ of \eqref{auto}, which exists e.g.\ by 
 \cite[Theorem 1.1]{ChangWang}. 
Then we can define the continuous path $\alpha:[0,\infty)\to H^s(\R^n)$ by setting
$\alpha(t)(x):=w(x/t)$, if $t>0$, and $\alpha(0):=0$.
Then, by construction, we have $I_{\infty}(\alpha(0)) = 0$ and 
\begin{align*}
I_{\infty}(\alpha(t))& =  \Frac{1}{2}\Intrn |(-\Delta)^{s/2} w(x/t)|^{2} - \Intrn G_{\infty}(w(x/t))\\
& = \Frac{t^{n-2s}}{2}\Intrn |(-\Delta)^{s/2}w(x)|^{2} - t^{n}\Intrn G_{\infty}(w),\quad t>0.
\end{align*}
Then, taking the derivative, we obtain
\begin{align*}
\Frac{d}{dt}I_{\infty}(\alpha(t)) & = \Frac{(n-2s)}{2}t^{n-2s-1}\Intrn |(-\Delta)^{s/2} w|^{2} - nt^{n-1}\Intrn G_{\infty}(w)\\
& = \Frac{t^{n-2s-1}}{2}\left\lbrace (n-2s)\Intrn |(-\Delta)^{s/2}w|^{2}  - 2n t^{2s}\Intrn G_{\infty}(w)\right\rbrace
\end{align*}
Since $w$ is a solution of $(\ref{auto})$, it satisfies the Poho\v zaev identity \eqref{pohozaevinf}, therefore
\begin{equation*}
\Frac{d}{dt}I_{\infty}(\alpha(t)) = \Frac{t^{n-2s-1}}{2}(n-2s)(1-t^{2s})\Intrn |(-\Delta)^{s/2}w|^{2}.
\end{equation*}
Then, since $n>2s$, the map $\{t\mapsto I_{\infty}(\alpha(t))\}$ achieves
the maximum value at $t=1$. By choosing $L>0$ sufficiently large and recalling \eqref{pohozaevinf} again
to guarantee $\int G_{\infty}(w)>0$, we have 
$$
\max_{0\leq t \leq L}I_{\infty}(\alpha(t)) = I_{\infty}(\alpha(1)) =  I_{\infty}(w) =m,\qquad I_{\infty}(\alpha(L)) <0.
$$ 
Taking $\gamma(t) := \alpha(tL)$, we have that $\gamma \in \Gamma_{\infty}$ and the result follows.
\end{proof}

\begin{lem}
\label{pre-lem1}
$\gamma([0,1])\cap \mathcal{P}_{\infty} \neq \emptyset$, for all $\gamma \in \Gamma_{\infty}$.
\end{lem}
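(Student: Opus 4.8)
The plan is to run an intermediate value argument for the scalar map $t\mapsto J(\gamma(t))$ along an arbitrary $\gamma\in\Gamma_{\infty}$, where
$$
J(u):=(n-2s)\Intrn |(-\Delta)^{s/2}u|^{2}-2n\Intrn G_{\infty}(u),
$$
so that $\mathcal{P}_{\infty}=\{u\in H^{s}(\Rn)\setminus\{0\}:J(u)=0\}$. Thus one must produce a parameter at which $J\circ\gamma$ vanishes and at which $\gamma$ does not vanish.

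First I would analyze the endpoint $t=1$. From $I_{\infty}(\gamma(1))<0$ and $I_{\infty}(u)=\frac12\|(-\Delta)^{s/2}u\|_2^2-\Intrn G_{\infty}(u)$ we get $\Intrn G_{\infty}(\gamma(1))>\frac12\|(-\Delta)^{s/2}\gamma(1)\|_2^2\ge 0$; in particular $\gamma(1)\neq 0$ (the only constant in $H^{s}(\Rn)$ is $0$), so $\|(-\Delta)^{s/2}\gamma(1)\|_2^2>0$, and hence $J(\gamma(1))<(n-2s)\|(-\Delta)^{s/2}\gamma(1)\|_2^2-n\|(-\Delta)^{s/2}\gamma(1)\|_2^2=-2s\|(-\Delta)^{s/2}\gamma(1)\|_2^2<0$.

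Next I would show $J(u)>0$ whenever $0<\|u\|_{H^{s}}\le\rho$ for $\rho$ small. Writing $2n\Intrn G_{\infty}(u)=2na_{\infty}\Intrn F(u)-n\lambda\|u\|_2^2$ and using $(n-2s)\|(-\Delta)^{s/2}u\|_2^2+n\lambda\|u\|_2^2\ge(n-2s)\|u\|_{H^{s}}^2$ together with the bound \eqref{condiF2} (with $\eps$ chosen so small that $na_{\infty}\eps/\lambda\le(n-2s)/2$) and the Sobolev embedding $H^{s}(\Rn)\hookrightarrow L^{p}(\Rn)$ for the exponent $2<p\le 2n/(n-2s)$ fixed in \eqref{condiF2}, one obtains
$$
J(u)\ge\frac{n-2s}{2}\|u\|_{H^{s}}^2-C\|u\|_{H^{s}}^p=\|u\|_{H^{s}}^2\Big(\frac{n-2s}{2}-C\|u\|_{H^{s}}^{p-2}\Big),
$$
which is strictly positive once $0<\|u\|_{H^{s}}\le\rho$ with $\rho$ small, since $p>2$.

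Finally I would assemble these facts. Set $T:=\sup\{t\in[0,1]:\gamma(t)=0\}$; this set contains $0$, and since $\gamma(1)\neq 0$ we have $T\in[0,1)$ with $\gamma(T)=0$ and $\gamma(t)\neq 0$ for all $t\in(T,1]$. By continuity of $\gamma$ at $T$ there is $t'\in(T,1)$ with $0<\|\gamma(t')\|_{H^{s}}\le\rho$, so $J(\gamma(t'))>0$, while $J(\gamma(1))<0$. Since $t\mapsto J(\gamma(t))$ is continuous on $[t',1]$ — which uses continuity of $\gamma$ and of $u\mapsto\Intrn G_{\infty}(u)$ on $H^{s}(\Rn)$, the latter again a consequence of \eqref{condiF2} and Sobolev — the intermediate value theorem yields $t_{0}\in(t',1)$ with $J(\gamma(t_{0}))=0$; as $t_{0}>T$ we have $\gamma(t_{0})\neq 0$, hence $\gamma(t_{0})\in\mathcal{P}_{\infty}$. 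The only genuinely nontrivial point is that a path in $\Gamma_{\infty}$ may coincide with $0$ on a whole subinterval near $t=0$, so one cannot just evaluate near the origin; passing to the last time $T$ at which $\gamma$ vanishes removes this difficulty, and the rest is a routine continuity-plus-IVT argument once the small-norm positivity of $J$ is in place.
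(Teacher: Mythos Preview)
Your proof is correct and follows essentially the same approach as the paper: both introduce the Poho\v zaev functional $J_\infty$ (your $J$ is $2J_\infty$), establish its strict positivity on a small punctured ball via \eqref{condiF2} and Sobolev, its strict negativity at $\gamma(1)$ (the paper does this in one line via the identity $J_\infty=nI_\infty-s\|(-\Delta)^{s/2}\cdot\|_2^2$, you via the equivalent inequality $\int G_\infty(\gamma(1))>\tfrac12\|(-\Delta)^{s/2}\gamma(1)\|_2^2$), and then conclude by continuity. Your last-zero argument with $T=\sup\{t:\gamma(t)=0\}$ is in fact a bit more explicit than the paper's about ensuring the IVT root occurs where $\gamma$ is nonzero, but the underlying idea is the same.
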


\begin{proof}
Consider the functional associated with the Poho\v zaev identity \eqref{pohozaevinf},
\begin{equation}
\label{funct-pohoz}
J_{\infty}(u):= \Frac{n-2s}{2}\Intrn |(-\Delta)^{s/2}u|^{2} - n\Intrn G_{\infty}(u),
\qquad u\in H^s(\Rn).
\end{equation} 
We will first prove 
that there exists $\rho >0$ such that, if $0<\Vert u \Vert_{H^s}\leq \rho$, then $J(u)>0$. We have
\begin{align*} 
J_{\infty}(u) &= \Frac{n-2s}{2}\Intrn |(-\Delta)^{s/2}u|^{2} - na_{\infty}\Intrn F(u) + \Frac{n\lambda}{2}\Intrn u^2  \\
&\geq \Frac{n-2s}{2}\Vert u \Vert^2_{H^s} - na_{\infty} \Intrn F(u).
\end{align*}
Then, by virtue of \eqref{condiF2} and the fractional Sobolev inequality
\cite[Theorem 6.7]{DiNezza}, we obtain
\begin{align*}
J_{\infty}(u)&\geq    \Frac{n-2s}{2}\Vert u \Vert^2_{H^s} 
- \Frac{n\varepsilon a_{\infty}}{2\lambda}\Intrn  \lambda u^2 - na_{\infty} C_\eps \Intrn |u|^p \\
& \geq  \Frac{1}{2}\left(n-2s- \frac{n\varepsilon a_{\infty}}{\lambda}\right)\Vert u \Vert^2_{H^s}  - na_{\infty}C_\eps \Vert u \Vert^p_{H^s} .
\end{align*}
Take now $\varepsilon>0$ so small that $n-2s- 
n\varepsilon a_{\infty}/\lambda >0$ and then choose
$\rho>0$ small enough so that $J_{\infty}(u) >0$ if $0< \Vert u \Vert_{H^s}\leq \rho$,  which is possible, since $p>2$. Observe now that
\begin{equation*}
J_{\infty}(u) = n I_{\infty}(u) -s\Intrn |(-\Delta)^{s/2}u|^{2}.
\end{equation*}
If $\gamma \in \Gamma_{\infty}$, we have $J_{\infty}(\gamma(0))=0$ and $J_{\infty}(\gamma(1)) \leq nI_{\infty}(\gamma(1)) <0$.
Then, by continuity, there exists $\sigma\in(0,1)$ such that $\Vert \gamma(\sigma)\Vert_{H^s} \geq  \rho$ and $J_{\infty}(\gamma(\sigma)) = 0$.  
This means $\gamma(\sigma)\in \mathcal{P}_{\infty}$, concluding the proof.
\end{proof}

\begin{lem}
\label{pre-lem2}
We have
$$
m=\inf_{u\in {\mathcal P}_\infty} I_\infty(u).
$$
\end{lem}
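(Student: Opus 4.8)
The plan is to prove separately that $\ell\le m$ and $\ell\ge m$, where $\ell:=\inf_{u\in\mathcal P_\infty}I_\infty(u)$, the second inequality by showing that $\ell$ is attained and that every minimizer is a solution of \eqref{auto}. It is useful to record first that, since $J_\infty(u)=nI_\infty(u)-s\int_{\Rn}|(-\Delta)^{s/2}u|^2$, every $u\in\mathcal P_\infty$ satisfies $I_\infty(u)=\frac sn\int_{\Rn}|(-\Delta)^{s/2}u|^2$; in particular $I_\infty\ge0$ on $\mathcal P_\infty$ and minimizing $I_\infty$ there is the same as minimizing the Gagliardo seminorm. Arguing as in the proof of Lemma~\ref{pre-lem1} (via \eqref{condiF2} with $p=2n/(n-2s)$ and the fractional Sobolev inequality) one sees that $\int_{\Rn}|(-\Delta)^{s/2}u|^2$ is bounded below by a positive constant on $\mathcal P_\infty$, so $\ell>0$. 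The inequality $\ell\le m$ is then immediate: by \cite[Theorem 1.1]{ChangWang} equation \eqref{auto} possesses a nontrivial solution, and every nontrivial solution $u$ satisfies the Poho\v zaev identity \eqref{pohozaevinf}, hence belongs to $\mathcal P_\infty$; taking the infimum of $I_\infty(u)$ over all such $u$ gives $\ell\le m$.

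Suppose now that $\ell$ is attained at some $\bar u\in\mathcal P_\infty$. Since the map $u\mapsto|u|$ does not increase the Gagliardo seminorm and, because $F$ vanishes on $(-\infty,0]$, does not decrease $\int_{\Rn}G_\infty$, we get $J_\infty(|\bar u|)\le J_\infty(\bar u)=0$; moreover $J_\infty(|\bar u|)<0$ is impossible, for a suitable downward rescaling of $|\bar u|$ would then lie in $\mathcal P_\infty$ with energy strictly below $\ell$. Hence $|\bar u|\in\mathcal P_\infty$ is again a minimizer and we may assume $\bar u\ge0$. Next, $J_\infty'(\bar u)\ne0$: otherwise $\bar u$ would solve $(-\Delta)^s\bar u=\frac{n}{n-2s}g_\infty(\bar u)$, and comparing the associated Poho\v zaev identity \cite[Theorem 1.1]{R-OS} with the defining relation of $\mathcal P_\infty$ would force $\int_{\Rn}G_\infty(\bar u)=0$, contradicting $\int_{\Rn}G_\infty(\bar u)=\frac{n-2s}{2n}\int_{\Rn}|(-\Delta)^{s/2}\bar u|^2>0$. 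By the Lagrange multiplier rule there is $\mu\in\R$ with $I_\infty'(\bar u)=\mu J_\infty'(\bar u)$, that is
\[
\big(1-\mu(n-2s)\big)\int_{\Rn}(-\Delta)^{s/2}\bar u\,(-\Delta)^{s/2}\varphi=(1-\mu n)\int_{\Rn}g_\infty(\bar u)\varphi,\qquad\varphi\in H^s(\Rn).
\]
The coefficient $1-\mu(n-2s)$ cannot vanish, since this would force $g_\infty(\bar u)\equiv0$ and then, testing with $\varphi=\bar u$ and using $\bar u\in\mathcal P_\infty$ together with $\bar u\ge0$, one would reach $(n-2s)\int_{\Rn}|(-\Delta)^{s/2}\bar u|^2=-2na_\infty\int_{\Rn}Q(\bar u)\le0$ (because $(f3)$ and $Q(0)=0$ give $Q\ge0$ on $[0,\infty)$), which is absurd. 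Hence $\bar u$ solves $(-\Delta)^s\bar u=\kappa\,g_\infty(\bar u)$ with $\kappa=\frac{1-\mu n}{1-\mu(n-2s)}$, and matching the corresponding Poho\v zaev identity against $\bar u\in\mathcal P_\infty$ (again using $\int_{\Rn}G_\infty(\bar u)>0$) forces $\kappa=1$, i.e. $\mu=0$. Therefore $\bar u$ solves \eqref{auto}, so $\ell=I_\infty(\bar u)\ge m$, which together with the first part yields $m=\ell$.

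It remains to establish the attainment of $\ell$, which is the main obstacle. The expected route is a constrained minimization. Given a minimizing sequence $(u_k)\subset\mathcal P_\infty$, the identity $I_\infty(u_k)=\frac sn\int_{\Rn}|(-\Delta)^{s/2}u_k|^2$ keeps the seminorms bounded, but the $L^2$-boundedness of $(u_k)$ is delicate — because of the asymptotically linear growth $(f2)$ and the absence of an Ambrosetti--Rabinowitz condition — and has to be squeezed out of $(f3)$, as in the local counterpart \cite{raqlili,jeanta2}. After replacing each $u_k$ by the downward rescaling into $\mathcal P_\infty$ of its symmetric-decreasing rearrangement (which does not increase the seminorm and leaves $\int_{\Rn}G_\infty$ unchanged, so $\ell$ is unaffected), one may assume $(u_k)$ radial; passing to a subsequence, $u_k\rightharpoonup u$ in $H^s(\Rn)$ and, by the compact embedding $H^s_{\mathrm{rad}}(\Rn)\hookrightarrow L^p(\Rn)$ for $2<p<2n/(n-2s)$, $u_k\to u$ in each such $L^p(\Rn)$; here $u\ne0$, since otherwise \eqref{condiF2} would force $\int_{\Rn}G_\infty(u_k)\to0$, against $\int_{\Rn}G_\infty(u_k)=\frac{n-2s}{2n}\int_{\Rn}|(-\Delta)^{s/2}u_k|^2\to\frac{n-2s}{2s}\ell>0$. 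Weak lower semicontinuity of the seminorm and a careful passage to the limit in $\int_{\Rn}G_\infty$ — the most demanding point in the asymptotically linear nonlocal setting — then give $J_\infty(u)\le0$; and $J_\infty(u)<0$ is impossible, because then $\int_{\Rn}G_\infty(u)>0$ and a rescaling $u(\cdot/t_0)$ with $t_0\in(0,1)$ would lie in $\mathcal P_\infty$ with $I_\infty(u(\cdot/t_0))=\frac sn t_0^{n-2s}\int_{\Rn}|(-\Delta)^{s/2}u|^2<\ell$. Hence $u\in\mathcal P_\infty$ and $I_\infty(u)\le\liminf_k I_\infty(u_k)=\ell$, so $u$ is the desired minimizer. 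Making this scheme rigorous in the fractional framework — the rearrangement and radial compactness for $(-\Delta)^{s/2}$, the $(f3)$-based boundedness, and the limit in $\int_{\Rn}G_\infty$ — is the bulk of the work and the source of the nonlocal ``additional difficulties'' alluded to in the introduction; alternatively, one may try to read off the equality $m=\ell$ directly from the fractional Berestycki--Lions theory of \cite{ChangWang}.
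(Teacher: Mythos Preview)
Your argument is correct in its architecture, but it takes a substantially longer route than the paper, and the heaviest part of your sketch --- the attainment of $\ell$ on $\mathcal P_\infty$ --- is precisely the step the paper bypasses entirely. The paper does not minimize directly on $\mathcal P_\infty$ and then invoke Lagrange multipliers. Instead it exploits the scaling structure of the autonomous problem: setting $\mathcal S_\infty=\{u:\int G_\infty(u)=1\}$, the map $\Phi(u)=u(\cdot/t_u)$ with $t_u=\big(\tfrac{n-2s}{2n}\big)^{1/2s}\|(-\Delta)^{s/2}u\|_2^{1/s}$ is a bijection $\mathcal S_\infty\to\mathcal P_\infty$, and along it $I_\infty(\Phi(u))$ is a fixed monotone function of $\|(-\Delta)^{s/2}u\|_2$. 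Hence $\inf_{\mathcal P_\infty}I_\infty$ equals (a constant times a power of) $\inf_{\mathcal S_\infty}\|(-\Delta)^{s/2}u\|_2$, which is exactly the Berestycki--Lions constrained problem; the identification of the latter with $m$ is then read off from \cite{ChangWang} and the argument in \cite[Lemma~1]{BJM}. This is the ``alternative'' you mention in your last sentence --- it is in fact the paper's proof, and it is a one-paragraph reduction.

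Your direct route is not wrong, and your Lagrange-multiplier computation (including the exclusion of $J_\infty'(\bar u)=0$ and of $1-\mu(n-2s)=0$, and the matching of Poho\v zaev identities to force $\kappa=1$) is carefully done. The soft spot is the attainment step: for a minimizing sequence $(u_k)\subset\mathcal P_\infty$ you only control $\|(-\Delta)^{s/2}u_k\|_2$, while the constraint $\int G_\infty(u_k)=\tfrac{n-2s}{2n}\|(-\Delta)^{s/2}u_k\|_2^2$ does \emph{not} by itself bound $\|u_k\|_2$, since $G_\infty$ is negative near $0$ and asymptotically quadratic at infinity; your appeal to $(f3)$ and to \cite{raqlili,jeanta2} for this point is not quite on target (neither paper extracts $L^2$-bounds from the Poho\v zaev constraint in this way). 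Likewise, the passage to the limit in $\int G_\infty(u_k)$ under mere $L^p$ convergence for $2<p<2^*_s$ is problematic for the same reason --- the quadratic tails of $G_\infty$ are not captured by subcritical compactness. These obstacles are genuine in the asymptotically linear regime and are exactly what the bijection to $\mathcal S_\infty$ is designed to avoid.
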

\begin{proof}
If we set
$$
{\mathcal S}_\infty=\Big\{u\in H^s(\R^n):\int G_\infty(u)=1\Big\},
$$
it follows that $\Phi:{\mathcal S}_\infty\to{\mathcal P}_\infty$ defined by
$$
\Phi(u)(x):=u\Big(\frac{x}{t_u}\Big),\qquad t_u:=\Big(\frac{n-2s}{2n}\Big)^{1/2s}\|(-\Delta)^{s/2} u\|_2^{1/s} 
$$
establishes a bijective correspondence and
$$
I_\infty(\Phi(u))=\frac{s}{n}\Big(\frac{n-2s}{2n}\Big)^{(n-2s)/2s}\|(-\Delta)^{s/2} u\|_2^{n/s},
\,\,\quad u\in {\mathcal S}_\infty,
$$ 
yielding in turn
$$
\inf_{u\in {\mathcal P}_\infty} I_\infty(u)=\inf_{u\in {\mathcal S}_\infty} I_\infty(\Phi(u))=
\inf_{u\in {\mathcal S}_\infty} \frac{s}{n}\Big(\frac{n-2s}{2n}\Big)^{(n-2s)/2s}\|(-\Delta)^{s/2} u\|_2^{n/s}=m, 
$$
since the last infimum is achieved and the corresponding value 
equals the least energy level $m$. This can be proved by performing calculations
similar to those of \cite[proof of (i) of Lemma 1]{BJM}. 
\end{proof}

\noindent
{\em Proof of Theorem~\ref{cim} concluded.\ }
By combining Lemma~\ref{pre-lem1} with \ref{pre-lem2}
we immediately obtain $m\leq c_{\infty}$. Considering the path 
$\gamma \in \Gamma_{\infty}$ provided by Lemma~\ref{pre-lem1}, we have
$$
\max_{0\leq t \leq 1}I_{\infty}(\gamma(t)) = I_{\infty}(w)=m.
$$
By taking the infimum over $\Gamma_{\infty}$ yields
$$
\inf_{\gamma \in \Gamma_{\infty}}\max_{t \in [0,1]}I_{\infty}(\gamma(t)) \leq m,
$$
so that $c_{\infty}\leq m$, which concludes the proof.
\qed

\section{Projecting on the Poho\v zaev manifold}

\noindent
From \cite[Proposition 1.12]{R-OS}, if $u\in H^{s}(\Rn)$ is a 
solution of $(\ref{problema})$, then $u$ satisfies de Poho\v zaev identity
\begin{equation}
\Frac{n-2s}{2}\Intrn |(-\Delta)^{s/2}u|^{2} = n\Intrn \left(\Big(a(x)+ \Frac{\nabla a(x)\cdot x}{n}\Big)F(u) - \Frac{\lambda }{2}u^2\right). 
\label{pohozaev}
\end{equation}
Furthermore, we define the Poho\v zaev manifold associated with \eqref{problema} by
$$
\mathcal{P}:= \left\lbrace u\in H^s(\Rn)\setminus\left\lbrace 0 \right\rbrace : u \ \text{satisfies identity $\eqref{pohozaev}$}\right\rbrace.
$$

\noindent
We first have the following
\begin{lem} \label{manifold}
Let the functional $J:H^s(\Rn)\rightarrow \mathbb{R}$ be defined by
$$
J(u) := \Frac{n-2s}{2}\Intrn |(-\Delta)^{s/2}u|^{2} - n\Intrn \left(\Big(a(x)+ \Frac{\nabla a(x)\cdot x}{n}\Big)F(u) - \Frac{\lambda u^{2}}{2}\right).
$$
Then, it holds that
\begin{itemize}
\item[a)] $\left\lbrace u \equiv 0 \right\rbrace$ is an isolated point of $J^{-1}(\left\lbrace 0 \right\rbrace)$;
\item[b)] $\mathcal{P}:= \left\lbrace u\in H^s(\Rn) \setminus \left\lbrace 0 \right\rbrace : J(u)=0\right\rbrace$ is a closed set.
\item[c)] $\mathcal{P}$ is a $C^1$ manifold.
\item[d)] There exists $\sigma >0$ such that $\Vert u \Vert_{H^s} > \sigma,$ for all $ u \in \mathcal{P}$.
\end{itemize}
\end{lem}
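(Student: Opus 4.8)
The plan is to obtain (a) and (d) together from a quantitative lower bound for $J$ near the origin, to deduce (b) formally from (d) and the continuity of $J$, and to prove (c) by verifying that $0$ is a regular value of $J$ on the open set $H^s(\Rn)\setminus\{0\}$. The starting point, obtained from $\Intrn|(-\Delta)^{s/2}u|^{2}=\Vert u\Vert_{H^s}^{2}-\lambda\Vert u\Vert_{2}^{2}$, is the reformulation
$$
J(u)=\Frac{n-2s}{2}\Vert u\Vert_{H^s}^{2}+s\lambda\Vert u\Vert_{2}^{2}-n\Intrn\Big(a(x)+\Frac{\nabla a(x)\cdot x}{n}\Big)F(u).
$$
Since $F\geq 0$ (because $f\geq0$ and $f\equiv0$ on $\R^-$) and $a(x)+\frac{\nabla a(x)\cdot x}{n}<a_\infty$ by $(A4)$, I drop the nonnegative term $s\lambda\Vert u\Vert_{2}^{2}$ and bound the last integral above by $na_\infty\Intrn F(u)$; then, exactly as in the proof of Lemma~\ref{pre-lem1}, the growth estimate \eqref{condiF2}, the fractional Sobolev embedding $H^s(\Rn)\hookrightarrow L^{p}(\Rn)$ with $2<p\leq 2n/(n-2s)$, and a small choice of $\eps$ give $J(u)\geq c_0\Vert u\Vert_{H^s}^{2}-C_1\Vert u\Vert_{H^s}^{p}$ for suitable $c_0,C_1>0$. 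Choosing $\sigma:=(c_0/(2C_1))^{1/(p-2)}$ then yields $J(u)\geq\frac{c_0}{2}\Vert u\Vert_{H^s}^{2}>0$ whenever $0<\Vert u\Vert_{H^s}\leq\sigma$, which proves (a) (the only zero of $J$ in the ball $B_\sigma(0)$ is $u\equiv0$) and (d) (every $u\in\mathcal P$ satisfies $\Vert u\Vert_{H^s}>\sigma$).

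For (b), I first note that $J\in C^1(H^s(\Rn),\R)$: the quadratic terms are smooth, while $x\mapsto a(x)+\frac{\nabla a(x)\cdot x}{n}$ is bounded (by $(A1)$ and \eqref{antigaA5}) and $f$ has at most linear growth by $(f1)$--$(f2)$, so the Nemytskii functional $u\mapsto\Intrn(a+\frac{\nabla a\cdot x}{n})F(u)$ is $C^1$. Hence $J^{-1}(\{0\})$ is closed in $H^s(\Rn)$; and if $u_k\in\mathcal P$ with $u_k\to u$ in $H^s(\Rn)$, then $J(u)=0$ while $\Vert u\Vert_{H^s}\geq\sigma>0$ by (d), so $u\in\mathcal P$. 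Thus $\mathcal P$ is closed.

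For (c), it suffices to prove $J'(u)\neq 0$ for every $u\in\mathcal P$: a nonzero continuous linear functional on a Hilbert space is onto $\R$ with (orthogonally) complemented kernel, so the implicit function theorem then realizes $\mathcal P=J^{-1}(\{0\})\cap(H^s(\Rn)\setminus\{0\})$ as a codimension-one $C^1$ submanifold, hence a $C^1$ manifold. I test $J'(u)$ against $u$ itself; eliminating $\Intrn|(-\Delta)^{s/2}u|^{2}$ via the constraint $J(u)=0$, a direct computation gives
$$
J'(u)[u]=-2n\Intrn\Big(a(x)+\Frac{\nabla a(x)\cdot x}{n}\Big)Q(u),\qquad Q(s)=\tfrac12 f(s)s-F(s)
$$
(note that $J(u)=\left.\frac{d}{dt}I(u(\cdot/t))\right|_{t=1}$, which is why the Poho\v zaev manifold contains all nontrivial solutions of \eqref{problema}). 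It remains to see that this integral is strictly positive. First, $(f3)$ with $s=0$ gives $0=Q(0)\leq D\,Q(t)$, so $Q\geq 0$ on $[0,+\infty)$ (and $Q\equiv0$ on $\R^-$), whence $Q(u)\geq0$ a.e. Second, the reformulation of $J$ together with $J(u)=0$ gives $n\Intrn(a+\frac{\nabla a\cdot x}{n})F(u)=\frac{n-2s}{2}\Vert u\Vert_{H^s}^{2}+s\lambda\Vert u\Vert_{2}^{2}>0$, so $\Intrn F(u)>0$ and the set $\{u>\delta_0\}$ has positive measure, where $\delta_0:=\sup\{s\geq0:F(s)=0\}$ (recall $F$ is nondecreasing on $[0,+\infty)$). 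Third, $(f3)$ forces $Q(s)>0$ for every $s>\delta_0$: if $Q(s_1)=0$ with $s_1>\delta_0$, then $Q$ cannot vanish identically on $[0,s_1]$ — otherwise $Q'\equiv0$ there would make $f(s)/s$ constant on $(0,s_1)$, equal to $0$ by $(f1)$, forcing $f\equiv0$ and hence $F\equiv0$ on $[0,s_1]$, against $s_1>\delta_0$ — so $Q(s_2)>0$ for some $s_2\in(0,s_1)$, and $(f3)$ then yields the contradiction $0<Q(s_2)\leq D\,Q(s_1)=0$. Since $Q(u)=0$ a.e. on $\{u\leq\delta_0\}$ (where $f\equiv0$) and $a(x)+\frac{\nabla a(x)\cdot x}{n}\geq\inf_{\Rn}a>0$ by $(A1)$ and $(A3)$, we obtain $\Intrn(a+\frac{\nabla a\cdot x}{n})Q(u)\geq(\inf_{\Rn}a)\Intrn_{\{u>\delta_0\}}Q(u)>0$, so $J'(u)[u]<0$ and in particular $J'(u)\neq0$.

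I expect the only genuinely delicate step to be the third one above, namely deriving the strict positivity of $Q$ on $(\delta_0,+\infty)$ from $(f3)$, which a priori supplies only the one-sided comparison $Q(s)\leq D\,Q(t)$. The near-origin estimate behind (a) and (d), the $C^1$ regularity of $J$, and the implicit-function-theorem argument for (c) are all routine.
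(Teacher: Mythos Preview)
Your proof is correct and follows essentially the same route as the paper: the same lower bound $J(u)\geq c_0\|u\|_{H^s}^2-C_1\|u\|_{H^s}^p$ near the origin for (a)/(d), the same closure argument for (b), and the same identity $J'(u)[u]=-2n\int(a+\tfrac{\nabla a\cdot x}{n})Q(u)$ for (c). Your treatment of (c) is in fact more careful than the paper's --- where the paper just writes ``$<0$, in light of $(A1)$, $(A3)$ and $(f3)$'', you actually justify the strict positivity of the integral via the $\delta_0$ argument and the dichotomy forced by $(f3)$, which fills in a step the paper leaves implicit.
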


\begin{proof} $(a)$
Using condition $(A4)$, we get
\begin{align*}
J(u) & = \Frac{n-2s}{2}\Intrn |(-\Delta)^{s/2}u|^{2}  - n\Intrn \left(\Big(a(x)+ \Frac{\nabla a(x)\cdot x}{n}\Big)F(u) - \Frac{\lambda u^{2}}{2}\right)\\
&>  \Frac{n-2s}{2}\Intrn |(-\Delta)^{s/2}u|^{2} - n\Intrn \left(a_{\infty}F(u) - \lambda\Frac{u^2}{2}\right) \\
&\geq \Frac{n-2s}{2}\Vert u \Vert^2_{H^s} - na_{\infty}\Intrn F(u).
\end{align*}
By virtue of the fractional Sobolev embedding \cite[Theorem 6.7]{DiNezza} and 
\eqref{condiF2}, we obtain
\begin{align*}
J(u)&\geq  \Frac{n-2s}{2}\Vert u \Vert^2_{H^s}- \Frac{n\varepsilon a_{\infty}}{2\lambda}\Intrn \lambda u^2 - na_{\infty}C_\eps\Intrn |u|^p \\
& \geq  \Frac{1}{2}\left( n - 2s - \Frac{n\varepsilon a_{\infty}}{\lambda}\right)\Vert u \Vert^2_{H^s} - na_{\infty}C_\eps \Vert u \Vert^p_{H^s}.
\end{align*}
Take $\varepsilon>0$ with
$n-2s - n\varepsilon a_{\infty}/\lambda >0$.  
For $\rho>0$ small enough, 
$J(u) >0$ if $0< \Vert u \Vert_{H^s} < \rho$.
\vskip4pt
\noindent
$(b)$ $J(u)$ is a $C^1$ functional, thus $\mathcal{P} \cup \left\lbrace 0\right\rbrace = J^{-1}(\left\lbrace 0\right\rbrace)$ is a closed subset. Moreover, $\left\lbrace u \equiv 0 \right\rbrace$ is an isolated point in $J^{-1}(\left\lbrace 0 \right\rbrace)$ and 
the assertion follows.
\vskip4pt
\noindent
$(c)$ Considering the derivative of $J$ at $u$ and applied at $u$ yields
\begin{equation}
J'(u)(u) = (n-2s)\Intrn |(-\Delta)^{s/2}u|^{2} - n\Intrn \left(a(x)f(u)u - \lambda u^2\right) - \Intrn \nabla a(x)\cdot xf(u)u. \label{j(u)u}
\end{equation}
Since $u\in \mathcal{P}$, it follows that $u$ satisfies $(\ref{pohozaev})$
and, using formula \eqref{pohozaev} into \eqref{j(u)u}, we obtain
\begin{align*}
J'(u)(u) & =  2n\Intrn a(x)F(u) - n\lambda\Intrn u^2 + 2\Intrn \nabla a(x)\cdot xF(u) \\
&- n\Intrn a(x)f(u)u+n\lambda \Intrn u^2 - \Intrn \nabla a(x)\cdot xf(u)u \\
&= 2n\Intrn \Big( a(x)+ \Frac{\nabla a(x)\cdot x}{n}\Big)F(u)  \\
&- n\Intrn \Big(a(x)+\Frac{\nabla a(x)\cdot x}{n}\Big)f(u)u \\
&= 2n\Intrn \Big(a(x)+ \Frac{\nabla a(x)\cdot x}{n}\Big)\Big(F(u)-\Frac{1}{2}f(u)u\Big)  <0,
\end{align*}
in light of $(A1)$, $(A3)$ and $(f3)$. Then, 
if $ u\in \mathcal{P}$, then  $J'(u)(u)<0$. 
This shows that the set $\mathcal{P}$ is a $C^1$ manifold.
\vskip4pt
\noindent
$(d)$ Since $0$ is isolated in $J^{-1}(\{0\})$, 
there is a ball $\|u\|_{H^s} \leq \sigma$ which does not intersect $\mathcal{P}$.
\end{proof}

\section{Nonexistence results}

\noindent
In this section we
get relations between the Poho\v zaev manifold  $\mathcal{P}$ associated with 
\eqref{problema} and the Poho\v zaev manifold $\mathcal{P}_{\infty}$ 
for the limiting problem \eqref{auto}. Recall that
$$
\mathcal{P}_{\infty} = \left\lbrace u\in H^s(\Rn)\setminus 
\left\lbrace 0\right\rbrace : J_{\infty}(u) = 0\right\rbrace,
$$ 
where $J_\infty$ is defined as in \eqref{funct-pohoz}.
Notice that the hypotheses $(A3)$-$(A4)$ 
imply that $I_\infty(u) < I(u)$ for every $u$ in $H^s(\Rn) \setminus \{0\}$.
If $p$ is defined as in \eqref{p-def}, we will show in this section that
$p=c_{\infty},$ that this level is not critical for $I$ and in turn
that it is not achieved.

\begin{lem}\label{pohoprojeta}
If $\int G_{\infty}(u)>0$, there exist unique $\vartheta_1,\vartheta_2 >0$ with
$u(\cdot/\vartheta_1)\in \mathcal{P}_{\infty}$ and $u(\cdot/\vartheta_2)\in \mathcal{P}$.
\end{lem}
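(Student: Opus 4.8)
\medskip
\noindent
The plan is to reduce each membership condition to a scalar equation in the dilation parameter. For $\vartheta>0$ set $u_\vartheta:=u(\cdot/\vartheta)\in H^s(\Rn)\setminus\{0\}$; the scaling identities
\[
\Intrn|(-\Delta)^{s/2}u_\vartheta|^2=\vartheta^{n-2s}C,\qquad
\Intrn\zeta(x)F(u_\vartheta)\,dx=\vartheta^n\Intrn\zeta(\vartheta y)F(u)\,dy,\qquad
\|u_\vartheta\|_2^2=\vartheta^n\|u\|_2^2
\]
hold for every bounded $\zeta$, where $C:=\Intrn|(-\Delta)^{s/2}u|^2$; note $C>0$, since $u\neq 0$ and the only constant in $H^s(\Rn)$ is $0$. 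Inserting these relations into $J_\infty$ and into $J$ and dividing by $n\vartheta^n>0$, one obtains, writing $L(\vartheta):=\frac{n-2s}{2n}\,C\,\vartheta^{-2s}$ and $b(x):=a(x)+\frac{\nabla a(x)\cdot x}{n}$,
\[
u_\vartheta\in\mathcal{P}_\infty\iff L(\vartheta)=\Intrn G_\infty(u),\qquad
u_\vartheta\in\mathcal{P}\iff L(\vartheta)=\tilde B(\vartheta),\quad
\tilde B(\vartheta):=\Intrn b(\vartheta y)F(u(y))\,dy-\frac{\lambda}{2}\|u\|_2^2.
\]
Since $s>0$ and $n>2s$, $L$ is a continuous strictly decreasing bijection of $(0,+\infty)$ onto itself, so I would then analyse these two equations separately.

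The case of $\mathcal{P}_\infty$ is immediate: the right-hand side is the positive constant $\int G_\infty(u)$, hence there is exactly one solution $\vartheta_1=\big((n-2s)C/(2n\int G_\infty(u))\big)^{1/2s}$, and $u_{\vartheta_1}\in\mathcal{P}_\infty$. For $\mathcal{P}$ the key step will be to prove that $\tilde B$ is nondecreasing on $(0,+\infty)$. As $F(u)\ge 0$, this reduces to showing that $\vartheta\mapsto b(\vartheta y)$ is nondecreasing for each fixed $y$, i.e.\ that $\nabla b(x)\cdot x\ge 0$ for all $x\in\Rn$; a direct computation gives
\[
\nabla b(x)\cdot x=\frac{n+1}{n}\,\nabla a(x)\cdot x+\frac1n\,x\cdot\mathcal H_a(x)\,x
=\frac1n\Big[\big(n\,\nabla a(x)\cdot x+x\cdot\mathcal H_a(x)\,x\big)+\nabla a(x)\cdot x\Big]\ge 0,
\]
the first bracketed term being nonnegative by $(A5)$ and the second by $(A3)$. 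Then, pointwise monotonicity of $\vartheta\mapsto b(\vartheta y)$ (via the fundamental theorem of calculus applied to this scalar function) yields at once that $\tilde B$ is nondecreasing.

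It then remains to locate the zero of $L-\tilde B$. From $(A1)$, $(A3)$, $(A4)$ one has $0<\inf_{\Rn}a\le a(x)\le b(x)<a_\infty$, so $b$ is bounded; since $F(u)\in L^1(\Rn)$ by \eqref{condiF2} and the fractional Sobolev embedding, $\tilde B$ is continuous on $(0,+\infty)$, and since $b(x)\to a_\infty$ as $|x|\to+\infty$ by $(A2)$ and \eqref{antigaA5}, dominated convergence gives $\tilde B(\vartheta)\to a_\infty\int F(u)-\frac{\lambda}{2}\|u\|_2^2=\int G_\infty(u)>0$ as $\vartheta\to+\infty$. Consequently $L-\tilde B$ is continuous and \emph{strictly} decreasing on $(0,+\infty)$ (a strictly decreasing function minus a nondecreasing one), with $(L-\tilde B)(\vartheta)\to+\infty$ as $\vartheta\to0^+$ and $(L-\tilde B)(\vartheta)\to-\int G_\infty(u)<0$ as $\vartheta\to+\infty$; hence it vanishes at exactly one point $\vartheta_2>0$, and $u_{\vartheta_2}\in\mathcal{P}$, completing the proof.

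I expect the only genuinely delicate point to be the sign $\nabla b(x)\cdot x\ge 0$: this is precisely where hypothesis $(A5)$ (together with $(A3)$) is used, and the splitting must be arranged exactly as above, namely $(n\,\nabla a\cdot x+x\cdot\mathcal H_a x)+\nabla a\cdot x$, since in the more obvious decomposition $\nabla a\cdot x+\frac1n(\nabla a\cdot x+x\cdot\mathcal H_a x)$ the second summand is not separately controlled by the hypotheses. The remaining parts are bookkeeping; in particular the $\vartheta^n$-scaling of the nonautonomous term is exactly what forces the weight $a(x)+\frac{\nabla a(x)\cdot x}{n}$, rather than $a(x)$, to appear in the definition of $\mathcal{P}$.
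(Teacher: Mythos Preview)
Your proposal is correct and follows essentially the same route as the paper. Both arguments reduce the membership conditions $u(\cdot/\vartheta)\in\mathcal P_\infty$ and $u(\cdot/\vartheta)\in\mathcal P$ to a scalar equation in $\vartheta$ (the paper does so via $\Psi'(\vartheta)=J(u(\cdot/\vartheta))/\vartheta$ and the function $h(\vartheta)$, which is exactly your $\tilde B(\vartheta)$), and the decisive monotonicity step is the same computation of $\nabla b(x)\cdot x=\frac{n+1}{n}\nabla a(x)\cdot x+\frac1n\,x\cdot\mathcal H_a(x)\,x\ge 0$ from $(A3)$ and $(A5)$. Your presentation is marginally more economical in that you obtain existence and uniqueness in one stroke from the strict monotonicity of $L-\tilde B$ (so you only need $\tilde B$ nondecreasing, not strictly increasing), whereas the paper argues existence and uniqueness separately; but this is a matter of packaging rather than a different idea.
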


\begin{proof}
First, we consider the case of $\mathcal{P}_{\infty}$. Consider the function 
$\varphi:(0,\infty)\to\R$ defined by
\begin{equation*}
\varphi(\vartheta)  :=  I_{\infty}(u(x/\vartheta)) 
= \Frac{\vartheta^{n-2s}}{2}\Intrn |(-\Delta)^{s/2}u|^{2}  - \vartheta^{n}\Intrn G_{\infty}(u).
\end{equation*}
Taking the derivative of $\varphi$, we obtain
\begin{align*}
\varphi'(\vartheta)  &=\Frac{\vartheta^{n-2s-1}}{2}\Big((n-2s)\Intrn |(-\Delta)^{s/2}u|^{2} - 2n\vartheta^{2s}\Intrn G_{\infty}(u) \Big) \\
&=\Frac{1}{\vartheta}\Big(\frac{n-2s}{2}\Intrn |(-\Delta)^{s/2} u(x/\vartheta)|^{2} 
- n\Intrn G_{\infty}(u(x/\vartheta)) \Big)=
\frac{J_\infty(u(\cdot/\vartheta))}{\vartheta}.
\end{align*}
Then, $\varphi '(\vartheta) = 0$ if and only if either $\vartheta = 0 $ or
$$
\vartheta=\vartheta_1= \Big(\frac{n-2s}{2n}\Frac{\int |(-\Delta)^{s/2}u|^{2}}{\int G_{\infty}(u)}\Big)^{1/2s} >0.
$$
Since by the formula for $\varphi'$ we have
$u(x/\vartheta)\in \mathcal{P}_{\infty}$ if and only if $\varphi'(\vartheta)=0$ for some $\vartheta>0$, we have the result.
In passing, we observe that $\varphi$ is positive for $\vartheta>0$ small while it is negative for $\vartheta>0$ large, so
that the unique critical point of $\varphi$ corresponds to a global maximum point for $\varphi$.
Now we turn to the case of $\mathcal{P}$. First, we define the function $\Psi:(0,\infty)\to\R$
by
\begin{align}
\label{basic}
\Psi(\vartheta)&:=  I(u(x/\vartheta)) = \Frac{\vartheta^{n-2s}}{2}\Intrn |(-\Delta)^{s/2}u|^{2}  - \Intrn G(x,u(x/\vartheta))  \\
& = \Frac{\vartheta^{n-2s}}{2}\Intrn |(-\Delta)^{s/2}u|^{2}  - \Intrn \Big(a(x)F(u(x/\vartheta)) - \lambda\Frac{u^2(x/\vartheta)}{2}\Big)  \notag \\
&=\Frac{\vartheta^{n-2s}}{2}\Intrn |(-\Delta)^{s/2}u|^{2}  - \vartheta^n\Intrn \Big( a(\vartheta x)F(u) - \lambda\Frac{u^2}{2}\Big).  \notag
\end{align}
Taking the derivative of $\Psi$ and recalling that $n>2s$, we obtain:
\begin{align}
\label{formPsi}
\Psi'(\vartheta)& = \Frac{n-2s}{2}\vartheta^{n-2s-1}\Intrn |(-\Delta)^{s/2}u|^{2} - n\vartheta^{n-1}\Intrn \Big(a(\vartheta x)F(u) - \lambda\Frac{u^2}{2}\Big)  \\
&- \vartheta^n\Intrn\nabla a(\vartheta x)\cdot x F(u)  \notag \\
& =  \vartheta^{n-2s-1}\left\lbrace \Frac{n-2s}{2}\Intrn |(-\Delta)^{s/2}u|^{2} - n\vartheta^{2s}\Intrn \Big(a(\vartheta x)F(u) - \lambda\Frac{u^2}{2}\Big) \right.  \notag \\
&- \left. \vartheta^{2s} \Intrn \nabla a(\vartheta x)\cdot (\vartheta x)F(u) \right\rbrace \notag \\
& =  \frac{1}{\vartheta}\left\lbrace \Frac{n-2s}{2}\Intrn |(-\Delta)^{s/2}u(x/\vartheta)|^{2} - n\Intrn \Big(a(x)F(u(x/\vartheta)) - \lambda\Frac{u^2(x/\vartheta)}{2}\Big) \right.  \notag \\
&- \left. \Intrn \nabla a(x)\cdot x F(u(x/\vartheta)) \right\rbrace=
\frac{J(u(\cdot/\vartheta))}{\vartheta}.   \notag
\end{align}
Hence, $u(\cdot/\vartheta) \in \mathcal{P}$ if 
and only if $\Psi'(\vartheta) =0$, for some $\vartheta >0$. 
Notice that, in view of $(A2)$ and \eqref{antigaA5} and the 
Lebesgue Dominated Convergence Theorem, we get
\begin{align*}
& \Lim_{\vartheta\to \infty} \Intrn a(\vartheta x)F(u) - \lambda\Frac{u^2}{2} 
=\Intrn G_{\infty}(u), \\
& \Lim_{\vartheta\to \infty}\Intrn \nabla a(\vartheta x)\cdot (\vartheta x)F(u) = 0.
\end{align*}
Therefore, if $\vartheta >0$ is sufficiently large, then 
$$
\Psi'(\vartheta) = \vartheta^{n-2s-1}\left\lbrace \Frac{n-2s}{2}\Intrn |(-\Delta)^{s/2}u|^{2} - n\vartheta^{2s}\Big(\Intrn G_{\infty}(u) + o_{\vartheta}(1)\Big)\right\rbrace.
$$
Since $\int G_{\infty}(u) >0$, it follows that 
$\Psi'(\vartheta) < 0$, for $\vartheta >0$ sufficiently large.
On the other hand, if $\vartheta >0$ is sufficiently small we have that condition $(A4)$, together with $(A1)$-$(A3)$ yield
$$
0 < a(x)+ \Frac{\nabla a(x)\cdot x}{n} < a_{\infty},
$$
$$
-\Frac{\lambda}{2}\Intrn u^2 \leq \Intrn \left(\Big(a(\vartheta x) + \Frac{\nabla a(\vartheta x)\cdot (\vartheta x)}{n}\Big)F(u)-\lambda\Frac{u^2}{2}\right) < \Intrn G_{\infty}(u)
\leq \Frac{a_{\infty} C}{2}\Intrn u^2,
$$
where $C$ is a positive constant independent of $\vartheta$.
Thus, taking  $\vartheta>0$ sufficiently small in $\Psi'(\vartheta)$, we obtain $\Psi'(\vartheta) >0$. 
Since $\Psi'$ is continuous, there exists $\vartheta_2=\vartheta_2(u)>0$ such that 
$\Psi'(\vartheta_2) = 0$, which means that $u(\cdot/\vartheta_2)\in \mathcal{P}$.
To show the uniqueness of $\vartheta_2$, note that $\Psi'(\vartheta)=0$ implies
\begin{equation}
\label{charatcht}
\Frac{n-2s}{2}\Intrn |(-\Delta)^{s/2}u|^{2} = n\vartheta^{2s} h(\vartheta),\qquad
h(\vartheta) :=  \Intrn \Big(\Big( a(\vartheta x)+ \Frac{\nabla a(\vartheta x)\cdot(\vartheta x)}{n}\Big)F(u) - \Frac{\lambda u^2}{2}\Big) ,
\end{equation}
with $\vartheta> 0$. Taking the derivative of $h$ we end up with
$$
h'(\vartheta) = \Frac{1}{\vartheta} \Intrn \left( \nabla a(\vartheta x)\cdot(\vartheta x) +
\Frac{(\vartheta x)\cdot {\mathcal H}_a(\vartheta x)\cdot (\vartheta x)}{n} + \Frac{\nabla a(\vartheta x)\cdot (\vartheta x)}{n}\right)F(u).
$$
Hypotheses $(A3)$ and $(A5)$ imply that $h'(\vartheta)>0$.
Therefore,  $h$ is an increasing function of $\vartheta$ and hence 
there exists a unique $\vartheta >0$ such that the identity in \eqref{charatcht} holds. 
As for the functional $\varphi$, the above arguments show that $\Psi$ is positive for $\vartheta>0$ small while 
it is negative for $\vartheta>0$ large, and hence
the unique critical point of $\Psi$ corresponds to a global maximum point for $\Psi$.
\end{proof}


\noindent
Consider the open subset of $H^s(\R^n)$
$$
\mathcal{O} = \Big\lbrace u\in H^s(\Rn)\setminus \left\lbrace 0 \right\rbrace:\, \int G_{\infty}(u)>0 \Big\rbrace.
$$ 
Then we have the following

\begin{lem}
The map defined by $ \mathcal{O}\ni u\mapsto \theta_2(u)\in (0,\infty)$, such that $u(\cdot/ \theta_2(u))\in \mathcal{P}$, is continuous.
\end{lem}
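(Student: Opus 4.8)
The plan is a routine subsequence argument that reduces the continuity of $u\mapsto\theta_2(u)$ to the uniqueness part of Lemma~\ref{pohoprojeta}. Recalling \eqref{charatcht} and \eqref{formPsi}, for $u\in\mathcal{O}$ and $\vartheta>0$ I set
\[
G(u,\vartheta):=\frac{n-2s}{2}\int |(-\Delta)^{s/2}u|^{2}-n\vartheta^{2s}\int\Big(\Big(a(\vartheta x)+\frac{\nabla a(\vartheta x)\cdot(\vartheta x)}{n}\Big)F(u)-\frac{\lambda u^2}{2}\Big)\,dx,
\]
so that $J(u(\cdot/\vartheta))=\vartheta^{n-2s}G(u,\vartheta)$ and, by Lemma~\ref{pohoprojeta}, $\theta_2(u)$ is the \emph{unique} $\vartheta>0$ with $G(u,\vartheta)=0$. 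Fix $u_0\in\mathcal{O}$ and a sequence $u_k\to u_0$ in $H^s(\R^n)$ (so $u_k\in\mathcal{O}$ for $k$ large, since $\mathcal{O}$ is open), and write $\vartheta_k:=\theta_2(u_k)$. It suffices to prove that every subsequence of $(\vartheta_k)$ has a further subsequence converging to $\theta_2(u_0)$.

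First I would check that $(\vartheta_k)$ stays in a compact subinterval of $(0,\infty)$. For the lower bound: since $u_k(\cdot/\vartheta_k)\in\mathcal{P}$, part (d) of Lemma~\ref{manifold} gives $\sigma^2<\|u_k(\cdot/\vartheta_k)\|_{H^s}^2=\vartheta_k^{n-2s}\|(-\Delta)^{s/2}u_k\|_2^2+\lambda\vartheta_k^{n}\|u_k\|_2^2$, and since $\|u_k\|_{H^s}$ is bounded (being convergent) and $n>2s$, this rules out $\vartheta_k\to 0$ along a subsequence. The upper bound is the heart of the matter. From $u_k\to u_0$ in $H^s(\R^n)$ and \eqref{condiF2} one gets, by a standard argument, $F(u_k)\to F(u_0)$ in $L^1(\R^n)$, hence $\int G_\infty(u_k)\to\int G_\infty(u_0)=:L>0$ and $\{F(u_k)\}$ is equi-integrable. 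Using $(A2)$ — so that $a(y)\ge a_\infty-\varepsilon$ for $|y|$ large — I would then repeat, now uniformly in $k$, the asymptotic estimate for $\Psi'$ carried out in the proof of Lemma~\ref{pohoprojeta}, producing $\Theta_0>0$ and $c_0>0$, independent of $k$, with $\int\big(a(\vartheta x)F(u_k)-\tfrac{\lambda}{2}u_k^2\big)\,dx\ge c_0$ for all $\vartheta\ge\Theta_0$ and all large $k$. Since $\nabla a(\vartheta x)\cdot(\vartheta x)\ge 0$ by $(A3)$, \eqref{formPsi} then yields $\Psi'(\vartheta)\le\vartheta^{n-2s-1}\big(\tfrac{n-2s}{2}\|(-\Delta)^{s/2}u_k\|_2^2-nc_0\vartheta^{2s}\big)<0$ for every $\vartheta$ beyond a $k$-independent threshold; as $\vartheta_k$ is the unique, hence global maximum, critical point of $\Psi$ (for $u=u_k$), this bounds $\vartheta_k$ from above.

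Next I would establish joint continuity of $G$ on $\mathcal{O}\times(0,\infty)$. The map $u\mapsto\|(-\Delta)^{s/2}u\|_2^2$ is continuous on $H^s(\R^n)$; for the second term, writing $h_u(\vartheta):=\int\big((a(\vartheta x)+\tfrac1n\nabla a(\vartheta x)\cdot(\vartheta x))F(u)-\tfrac{\lambda}{2}u^2\big)\,dx$ and splitting $h_{v_k}(\vartheta_k)-h_v(\vartheta)=\big(h_{v_k}(\vartheta_k)-h_v(\vartheta_k)\big)+\big(h_v(\vartheta_k)-h_v(\vartheta)\big)$, the first difference is controlled by $a_\infty\|F(v_k)-F(v)\|_1+\tfrac{\lambda}{2}\|v_k^2-v^2\|_1\to 0$ uniformly in $\vartheta_k$ — using $0<a(\vartheta x)+\tfrac1n\nabla a(\vartheta x)\cdot(\vartheta x)<a_\infty$ for all $\vartheta,x$ by $(A1)$, $(A3)$, $(A4)$, together with the continuity of $v\mapsto\int F(v)$ and $v\mapsto\|v\|_2^2$ on $H^s$ — while the second difference tends to $0$ by dominated convergence in $\vartheta$ (the integrand is dominated by $a_\infty F(v)+\tfrac{\lambda}{2}v^2\in L^1$ and $a\in C^1$), exactly as in the proof of Lemma~\ref{pohoprojeta}. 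Note that only $h_u$, not $h_u'$, enters here, so the $C^2$/Hessian term of $(A5)$ plays no role.

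Granting these two steps, from any subsequence of $(\vartheta_k)$ I extract a further one along which $\vartheta_k\to\bar\vartheta\in(0,\infty)$, pass to the limit in $G(u_k,\vartheta_k)=0$ by joint continuity to obtain $G(u_0,\bar\vartheta)=0$, and conclude $\bar\vartheta=\theta_2(u_0)$ by the uniqueness in Lemma~\ref{pohoprojeta}. Since the limit does not depend on the subsequence, $\vartheta_k\to\theta_2(u_0)$, which proves that $u\mapsto\theta_2(u)$ is continuous on $\mathcal{O}$. The only nonroutine point is the uniform-in-$k$ upper bound on $\vartheta_k$; the compactness, joint continuity, and passage to the limit are all soft.
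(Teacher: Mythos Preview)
Your proof is correct and follows the same subsequence--plus--uniqueness strategy as the paper. The paper's argument is slightly leaner: it only establishes the upper bound on $(\vartheta_k)$, and does so by direct contradiction---if $\vartheta_2(u_j)\to\infty$ along a subsequence, dominated convergence forces the integral factor in \eqref{charatcht} to converge to $\int G_\infty(u)>0$, so the right-hand side diverges while the left-hand side stays bounded---rather than building a uniform-in-$k$ estimate on $\Psi'$; and it passes to the limit in the defining equation directly via dominated convergence without formalizing joint continuity of $G$. Your version is more carefully quantified (and you address the lower bound $\vartheta_k\not\to 0$ explicitly, which the paper leaves implicit), but the underlying mechanism is identical.
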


\begin{proof}
Let $u\in \mathcal{O}$ and consider  $(u_j)\subset \mathcal{O}$ 
such that $u_j \to u$ in $H^s(\R^n)$ as $j\to\infty$. 
First note that $\vartheta_2(u_j)$ is bounded.
Indeed, consider the expression \eqref{charatcht} of $\psi'= 0$ in the proof 
of Lemma~\ref{pohoprojeta} for $u_j$ and $\vartheta_2(u_j)$
\begin{equation*}
\Frac{n-2s}{2}\Intrn |(-\Delta)^{s/2}u_j|^{2} =  n\vartheta^{2s}_2(u_j)\Intrn \Big(\Big(a(\vartheta_2(u_j) x)  
+ \Frac{\nabla a(\vartheta_2(u_j) x)\cdot(\vartheta_2(u_j) x)}{n}\Big)F(u_j)   
- \Frac{\lambda u^2_j}{2}\Big). 
\end{equation*} 
Suppose by contradiction that $\vartheta_2(u_j)\to\infty$ as $j\to\infty$, 
along a suitable subsequence. 
Then, in light of the assumptions on $a$ and $F$ and by 
Lebesgue Dominated Convergence Theorem,
the right-hand side of the above equation goes to 
infinity while the left-hand converges to
$(n-2s)/2\|(-\Delta)^{s/2}u\|^{2}_2,$
which is a contradiction. Hence, $\vartheta_2(u_j)$ admits a convergent 
subsequence, say $\vartheta_2(u_j)\rightarrow \bar\vartheta$ as $j\to\infty$.
In turn, by Lebesgue Dominated Convergence Theorem, as $j\to\infty$, we have
\begin{align*}
\Intrn a(\vartheta_2(u_j)x)F(u_j)  & \to \Intrn a(\bar{\vartheta}x)F(u),   \\
\Intrn \nabla a(\vartheta_2(u_j) x)\cdot(\vartheta_2(u_j) x) F(u_j) & \to \Intrn \nabla a(\bar{\vartheta}x)\cdot (\bar{\vartheta}x)F(u).
\end{align*}
Then, since $u_j\rightarrow u$ in $H^s(\Rn)$ as $j\to\infty$, we obtain
$$
\Frac{n-2s}{2}\Intrn |(-\Delta)^{s/2}u|^{2} = n\bar{\vartheta}^{2s} 
\Intrn \Big( \Big(a(\bar{\vartheta}x) + \Frac{\nabla a(\bar{\vartheta}x)\cdot (\bar{\vartheta}x)}{n}\Big)F(u) - \Frac{\lambda u^2}{2}\Big).
$$
Hence $u(\cdot/\bar{\vartheta})\in \mathcal{P}$ and, 
by uniqueness of the projection in
$\mathcal{P}$, $\bar{\vartheta}= \vartheta_2(u)$.
\end{proof}

\begin{lem}\label{mini}
If $u \in \mathcal{P}_{\infty}$, then $\int G_{\infty}(u) >0$.
\end{lem}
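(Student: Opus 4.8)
The plan is to read off the conclusion directly from the definition of $\mathcal{P}_\infty$. Recall that
$$
\mathcal{P}_{\infty} = \left\lbrace u\in H^s(\Rn)\setminus \left\lbrace 0\right\rbrace : J_{\infty}(u) = 0\right\rbrace,
\qquad
J_{\infty}(u)= \Frac{n-2s}{2}\Intrn |(-\Delta)^{s/2}u|^{2} - n\Intrn G_{\infty}(u).
$$
So, given $u\in \mathcal{P}_\infty$, the identity $J_\infty(u)=0$ (equivalently \eqref{pohozaevinf}) yields
$$
\Intrn G_{\infty}(u) = \Frac{n-2s}{2n}\Intrn |(-\Delta)^{s/2}u|^{2},
$$
and since $n>2s$ the coefficient $(n-2s)/(2n)$ is strictly positive. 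Hence the whole matter reduces to showing that $\Intrn |(-\Delta)^{s/2}u|^{2}>0$.

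For this I would argue by contradiction: if $\Intrn |(-\Delta)^{s/2}u|^{2}=0$, then the Gagliardo seminorm $[u]_{H^s}$ vanishes (the two quantities differ only by the positive constant $C(n,s)/2$), which forces $u$ to be constant almost everywhere on $\Rn$. But $u\in H^s(\Rn)$ in particular belongs to $L^2(\Rn)$, and the only constant lying in $L^2(\Rn)$ is $0$; this contradicts $u\in H^s(\Rn)\setminus\{0\}$. Alternatively, one may simply invoke that $\|u\|_{H^s}>\sigma>0$ on the Pohozaev manifold together with part $(d)$-type positivity, or more elementarily that $\|u\|_{H^s}^2=\Intrn |(-\Delta)^{s/2}u|^{2}+\lambda\Intrn u^2$ and, were the first term zero, the Pohozaev identity would give $\Intrn G_\infty(u)=0$ as well, so that $I_\infty(u)=\frac12\Intrn|(-\Delta)^{s/2}u|^2-\Intrn G_\infty(u)=0$, while a direct scaling/minimality argument (as in Lemma~\ref{pre-lem2}) shows $I_\infty>0$ on $\mathcal{P}_\infty$, a contradiction. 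Either route closes the argument.

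There is essentially no hard step here: the statement is a bookkeeping consequence of the Pohozaev identity defining $\mathcal{P}_\infty$ combined with $n>2s$. The only point requiring a line of justification is the strict positivity of $\Intrn |(-\Delta)^{s/2}u|^{2}$ for a nonzero element of $H^s(\Rn)$, which I would dispatch via the constancy-forces-triviality observation above. Consequently $\Intrn G_{\infty}(u)>0$, as claimed.
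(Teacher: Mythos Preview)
Your proposal is correct and follows essentially the same route as the paper: from $J_\infty(u)=0$ one gets $\int G_\infty(u)=\tfrac{n-2s}{2n}\int|(-\Delta)^{s/2}u|^2\ge 0$, and the strict inequality comes from observing that a vanishing Gagliardo seminorm forces $u$ to be constant, hence zero in $L^2(\R^n)$, contradicting $u\neq 0$. The paper's proof is exactly this argument; your alternative routes are unnecessary but do no harm.
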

\begin{proof}
Let $u \in \mathcal{P}_{\infty}.$ Of course $\int G_{\infty}(u) \geq 0$. Assume by contradiction that $\int G_{\infty}(u)=0.$ Then
$$
0=\|(-\Delta)^{s/2} u\|_2^2=\frac{C(n,s)}{2}\iint  \frac{(u(x)-u(y))^2}{|x-y|^{n+2s}}dxdy,
$$
so that $u$ is constant and hence, as $u\in L^2(\R^n)$, $u=0$, contradicting $u \in \mathcal{P}_{\infty}$.
\end{proof}

\begin{lem}\label{pohoprojeta2}
If $u \in \mathcal{P}_{\infty}$, then there exists a unique $\vartheta > 0$ such that $u(\cdot/\vartheta)\in \mathcal{P}$ and $\vartheta >1$.
\end{lem}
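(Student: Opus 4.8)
The plan is to deduce the statement from Lemmas~\ref{mini} and~\ref{pohoprojeta} by comparing the two Poho\v zaev-type identities. Since $u\in\mathcal P_\infty$, Lemma~\ref{mini} gives $\int G_\infty(u)>0$, so Lemma~\ref{pohoprojeta} applies and yields a unique $\vartheta=\vartheta(u)>0$ with $u(\cdot/\vartheta)\in\mathcal P$; this already settles existence and uniqueness, and it remains only to prove $\vartheta>1$.

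First I would observe that $u$ is positive on a set of positive measure: were $u\le0$ a.e., then $F(u)=0$ a.e.\ by $(f1)$, so $G_\infty(u)=-\frac{\lambda}{2}u^2$ and $J_\infty(u)=\frac{n-2s}{2}\|(-\Delta)^{s/2}u\|_2^2+\frac{n\lambda}{2}\|u\|_2^2>0$, contradicting $u\in\mathcal P_\infty$. Consequently, using $F(u)\ge0$ from $(f1)$ together with the \emph{strict} inequality $a(\vartheta x)+\frac{\nabla a(\vartheta x)\cdot(\vartheta x)}{n}<a_\infty$ from $(A4)$, the function $h$ appearing in \eqref{charatcht} satisfies, for every $\vartheta>0$,
$$
h(\vartheta)=\Intrn\Big(\Big(a(\vartheta x)+\frac{\nabla a(\vartheta x)\cdot(\vartheta x)}{n}\Big)F(u)-\frac{\lambda u^2}{2}\Big)<\Intrn\Big(a_\infty F(u)-\frac{\lambda}{2}u^2\Big)=\Intrn G_\infty(u)=:\Lambda,
$$
with $\Lambda>0$.

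Finally I would combine the identities. By \eqref{charatcht}, $u(\cdot/\vartheta)\in\mathcal P$ means $\frac{n-2s}{2}\Intrn|(-\Delta)^{s/2}u|^2=n\vartheta^{2s}h(\vartheta)$, while $u\in\mathcal P_\infty$, i.e.\ $J_\infty(u)=0$, means $\frac{n-2s}{2}\Intrn|(-\Delta)^{s/2}u|^2=n\Lambda$. Hence $\vartheta^{2s}h(\vartheta)=\Lambda>0$, which forces $h(\vartheta)>0$, and then $\vartheta^{2s}=\Lambda/h(\vartheta)>1$ since $0<h(\vartheta)<\Lambda$; as $2s>0$, this gives $\vartheta>1$.

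The only delicate point is the strict inequality $h(\vartheta)<\Lambda$, which relies on the fact that $u$ is not a.e.\ nonpositive (the argument already used in Lemma~\ref{mini}) and on the strictness in $(A4)$. An equivalent, more geometric route would be to compare the functions $\varphi$ and $\Psi$ of Lemma~\ref{pohoprojeta}: the pointwise inequality $J(v)>J_\infty(v)$, valid for $v\neq0$ positive on a set of positive measure, gives $\Psi'(\vartheta)>\varphi'(\vartheta)$ for all $\vartheta>0$; since $\varphi'(1)=0$ one gets $\Psi'(1)>0$, and since $\Psi'>0$ on $(0,\vartheta)$ and $\Psi'<0$ on $(\vartheta,\infty)$ (as shown in the proof of Lemma~\ref{pohoprojeta}), it follows that $\vartheta>1$.
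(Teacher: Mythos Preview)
Your proof is correct and follows essentially the same route as the paper: invoke Lemma~\ref{mini} and Lemma~\ref{pohoprojeta} for existence and uniqueness, then use the strict inequality in $(A4)$ to compare $h(\vartheta)$ with $\int G_\infty(u)$ and conclude $\vartheta^{2s}>1$. Your extra justification that $u$ is positive on a set of positive measure (so the strict pointwise inequality in $(A4)$ survives integration) is a welcome piece of rigor that the paper leaves implicit.
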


\begin{proof}
Let $u \in \mathcal{P}_{\infty}$. Then, by Lemma~\ref{mini}, $\int G_{\infty}(u) >0$. 
In turn, by Lemma~\ref{pohoprojeta},
there exists a unique $\vartheta>0$ such that $u(\cdot/\vartheta)\in \mathcal{P}$. Now, we are left with the proof that $\vartheta>1$. By the arguments in the previous lemmas, it follows that $\vartheta$ satisfies
$$
\Frac{n-2s}{2}\Intrn |(-\Delta)^{s/2}u|^{2} = n\vartheta^{2s} 
\Intrn \Big(\Big( a(\vartheta x) + 
\Frac{\nabla a(\vartheta x)\cdot (\vartheta x)}{n}\Big) F(u) - \lambda\Frac{u^2}{2}\Big).
$$
By condition $(A4)$, we get
\begin{align*}
\Frac{n-2s}{2n}\Intrn |(-\Delta)^{s/2}u|^{2} &< \vartheta^{2s} \Intrn \Big(a_{\infty}F(u)- \lambda\Frac{u^2}{2}\Big)
=\vartheta^{2s} \Intrn G_{\infty}(u).
\end{align*}
Since $u\in \mathcal{P}_{\infty}$, the inequality above yields  $\theta>1$.
\end{proof}

\begin{lem}\label{mini2}
If $u \in \mathcal{P}$, then $\int G_{\infty}(u) >0$.
\end{lem}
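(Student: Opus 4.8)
The plan is to read the conclusion off directly from the defining identity of $\mathcal{P}$, using only condition $(A4)$ and the sign of $F$, in the same spirit as Lemma~\ref{mini}. First I would exploit that $u\in\mathcal{P}$ means $J(u)=0$, i.e.
\[
\Frac{n-2s}{2}\Intrn |(-\Delta)^{s/2}u|^{2}
= n\Intrn\Big(\Big(a(x)+\Frac{\nabla a(x)\cdot x}{n}\Big)F(u)-\Frac{\lambda}{2}u^{2}\Big).
\]
The left-hand side is nonnegative, and it cannot vanish: if $\int|(-\Delta)^{s/2}u|^{2}=0$, then, writing the Gagliardo seminorm as a double integral exactly as in the proof of Lemma~\ref{mini}, $u$ would be a.e.\ constant, hence $u=0$ since $u\in L^{2}(\Rn)$, contradicting $u\in\mathcal{P}$ (one may alternatively invoke part $(d)$ of Lemma~\ref{manifold}). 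Consequently the right-hand side is strictly positive, namely
\[
\Intrn\Big(\Big(a(x)+\Frac{\nabla a(x)\cdot x}{n}\Big)F(u)-\Frac{\lambda}{2}u^{2}\Big)>0.
\]

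Next I would observe that $(f1)$ gives $f\geq 0$ on $\R$ with $f\equiv 0$ on $(-\infty,0]$, so $F(s)\geq 0$ for every $s\in\R$, whence $F(u)\geq 0$ a.e. Combining this with $(A4)$, that is $a(x)+\frac{\nabla a(x)\cdot x}{n}<a_{\infty}$ pointwise, yields
\[
\Big(a(x)+\Frac{\nabla a(x)\cdot x}{n}\Big)F(u)-\Frac{\lambda}{2}u^{2}\ \leq\ a_{\infty}F(u)-\Frac{\lambda}{2}u^{2}\ =\ G_{\infty}(u).
\]
Integrating this pointwise inequality over $\Rn$ and chaining with the strict positivity obtained in the previous step gives $\int G_{\infty}(u)>0$, which is the assertion.

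I do not expect any genuine obstacle here. The only point that requires a word of care is ruling out $\int|(-\Delta)^{s/2}u|^{2}=0$ in order to pass from $J(u)=0$ to strict positivity of the nonlinear term; this is settled verbatim as in Lemma~\ref{mini}. Everything else is a one-line comparison using $(A4)$ and $F\geq 0$, so the argument should be very short.
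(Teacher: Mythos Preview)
Your argument is correct and essentially identical to the paper's: both use that $u\in\mathcal{P}$ gives the Poho\v zaev identity, bound the right-hand side above by $n\int G_\infty(u)$ via $(A4)$ and $F\geq 0$, and rule out $\int|(-\Delta)^{s/2}u|^2=0$ exactly as in Lemma~\ref{mini}. The only cosmetic difference is the order in which the two observations are made.
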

\begin{proof}
Let $u\in \mathcal{P}$. Then,  by condition $(A4)$, $u$ satisfies
\begin{equation*}
\Frac{n-2s}{2}\Intrn |(-\Delta)^{s/2}u|^{2} =  n \Intrn \Big(\Big(a(x) + \Frac{\nabla a(x)\cdot x}{n}\Big)F(u) - \lambda\Frac{u^2}{2}\Big) 
<  n \Intrn G_{\infty}(u).
\end{equation*}
Since $\int |(-\Delta)^{s/2}u|^{2}>0$ otherwise $u$ would be constant and hence the zero function as $u\in L^2(\R^n)$, the assertion follows.
\end{proof}

\begin{lem}
\label{other-proj}
If $u \in \mathcal{P}$, then there exists a unique $\vartheta>0$ such that $u(\cdot/\vartheta)\in \mathcal{P}_{\infty}$ and $\vartheta <1$.
\end{lem}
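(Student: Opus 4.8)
The plan is to read the result off from Lemma~\ref{mini2} and Lemma~\ref{pohoprojeta}. Since $u\in\mathcal{P}$, Lemma~\ref{mini2} gives $\int G_\infty(u)>0$, which is exactly the hypothesis under which the ``$\mathcal{P}_\infty$ part'' of Lemma~\ref{pohoprojeta} applies; it produces a \emph{unique} $\vartheta>0$ with $u(\cdot/\vartheta)\in\mathcal{P}_\infty$. So existence and uniqueness are immediate, and only the bound $\vartheta<1$ remains to be shown.

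For that, I would recall from the proof of Lemma~\ref{pohoprojeta} the explicit formula for the projection onto $\mathcal{P}_\infty$,
$$
\vartheta=\Big(\Frac{n-2s}{2n}\,\Frac{\Intrn |(-\Delta)^{s/2}u|^{2}}{\Intrn G_\infty(u)}\Big)^{1/2s},
$$
so that $\vartheta<1$ is equivalent to $\tfrac{n-2s}{2}\int|(-\Delta)^{s/2}u|^{2}<n\int G_\infty(u)$. But this strict inequality is precisely the one displayed inside the proof of Lemma~\ref{mini2}: since $u\in\mathcal{P}$ satisfies the Poho\v zaev identity~\eqref{pohozaev}, condition $(A4)$ (which gives $a(x)+\tfrac{\nabla a(x)\cdot x}{n}<a_\infty$ pointwise, with $F(u)\not\equiv 0$ because $\int G_\infty(u)>0$ rules out $F(u)\equiv 0$) yields
$$
\Frac{n-2s}{2}\Intrn |(-\Delta)^{s/2}u|^{2}=n\Intrn\Big(\Big(a(x)+\Frac{\nabla a(x)\cdot x}{n}\Big)F(u)-\Frac{\lambda}{2}u^2\Big)<n\Intrn G_\infty(u).
$$
Dividing by $n\int G_\infty(u)>0$ gives $\vartheta^{2s}<1$, hence $\vartheta<1$, and the proof is complete.

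I do not expect a genuine obstacle here: the statement is the mirror image of Lemma~\ref{pohoprojeta2} (there an element of $\mathcal{P}_\infty$ projects into $\mathcal{P}$ with scaling parameter $>1$; here an element of $\mathcal{P}$ projects into $\mathcal{P}_\infty$ with scaling parameter $<1$), and the whole argument is bookkeeping around the inequality $\tfrac{n-2s}{2}\|(-\Delta)^{s/2}u\|_2^2<n\int G_\infty(u)$, already known to hold on $\mathcal{P}$. The only point needing a little care is that this inequality be strict rather than merely $\leq$, which is guaranteed by the pointwise strict inequality in $(A4)$ together with $F(u)\not\equiv 0$ (the latter being forced by $\int G_\infty(u)>0$ via the Poho\v zaev identity, since $F(u)\equiv 0$ would make the right-hand side of \eqref{pohozaev} nonpositive while the left-hand side is positive for $u\neq 0$).
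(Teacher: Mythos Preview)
Your proof is correct and follows essentially the same route as the paper: invoke Lemma~\ref{mini2} to get $\int G_\infty(u)>0$, apply Lemma~\ref{pohoprojeta} for existence and uniqueness of $\vartheta$, then use the explicit formula $\vartheta^{2s}=\tfrac{n-2s}{2n}\,\|(-\Delta)^{s/2}u\|_2^2/\int G_\infty(u)$ together with the strict inequality from the proof of Lemma~\ref{mini2} to conclude $\vartheta<1$. Your extra remark on why the inequality is strict (namely $F(u)\not\equiv 0$) is a welcome clarification that the paper leaves implicit.
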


\begin{proof}
Let $u\in \mathcal{P}$, then $\int G_{\infty}(u)>0$ by Lemma~\ref{mini2}. 
By Lemma \ref{pohoprojeta}, there exists a unique $\vartheta >0$ such that $u(\cdot/\vartheta)\in \mathcal{P}_{\infty}$.
We are left with the proof that $\vartheta <1$. Notice that
 $$
 \Frac{n-2s}{2n}\Intrn |(-\Delta)^{s/2}u|^{2} < \Intrn G_{\infty}(u).
 $$
Since $u(\cdot/\vartheta)\in \mathcal{P}_{\infty}$, 
then the assertion follows since $\vartheta>0$ satisfies
 $$
 \vartheta^{2s} = \frac{n-2s}{2n}\frac{\Intrn |(-\Delta)^{s/2}u|^{2}}{\Intrn G_{\infty}(u)} <1.
 $$
 This concludes the proof.
\end{proof}

\noindent
Notice that, as a consequence of the previous results, a given function
$u \in H^s(\Rn)\backslash \{0\}$ can be projected onto the manifolds $\mathcal{P}$ and 
$\mathcal{P}_{\infty}$ if and only if $\int G_{\infty}(u)>0$. We will also need the
following

\begin{lem} \label{thetay}
If $ u\in \mathcal{P}_{\infty}$, then $u(\cdot-y)\in \mathcal{P}_{\infty},$ for all $ y \in \Rn$. Moreover, there exists $\vartheta_y >1$ with
$$
u\Big(\Frac{\cdot-y}{\vartheta_y}\Big)\in \mathcal{P},  \,\,\qquad
\Lim_{|y|\to \infty}\vartheta_y = 1.
$$
\end{lem}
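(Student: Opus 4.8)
\medskip
\noindent\textbf{Plan of proof.}
The first assertion will follow at once from the translation invariance of $J_\infty$: both the Gagliardo energy $\Intrn|(-\Delta)^{s/2}u|^2=\frac{C(n,s)}{2}[u]_{H^s}^2$ and the term $\Intrn G_\infty(u)$ are unchanged under $u\mapsto u(\cdot-y)$ — the first by the obvious change of variables in the double integral defining $[\,\cdot\,]_{H^s}$, the second trivially — so $J_\infty(u(\cdot-y))=J_\infty(u)=0$ and $u(\cdot-y)\in\mathcal{P}_\infty$. For the second assertion, since $u\in\mathcal{P}_\infty$ gives $\Intrn G_\infty(u)>0$ by Lemma~\ref{mini}, and hence $\Intrn G_\infty(u(\cdot-y))>0$ as well, I will apply Lemma~\ref{pohoprojeta2} to the translate $u(\cdot-y)\in\mathcal{P}_\infty$ to obtain a unique $\vartheta_y>1$ whose associated rescaling carries $u(\cdot-y)$ into $\mathcal{P}$; this is the function $u\big(\frac{\cdot-y}{\vartheta_y}\big)$ of the statement. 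Everything then reduces to showing $\vartheta_y\to1$ as $|y|\to\infty$.

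For this I will use the characterization \eqref{charatcht} applied to $u(\cdot-y)$. Put $A:=\frac{n-2s}{2}\Intrn|(-\Delta)^{s/2}u|^2$ and $w(z):=a(z)+\frac{\nabla a(z)\cdot z}{n}$, and, after the change of variables $x\mapsto x+y$, let
$$
h_y(\vartheta):=\Intrn w\big(\vartheta(x+y)\big)F(u(x))\,dx-\frac{\lambda}{2}\Intrn u^2;
$$
then $\vartheta_y$ is determined by $A=n\vartheta_y^{2s}h_y(\vartheta_y)$. Since $u\in\mathcal{P}_\infty$ satisfies the Poho\v zaev identity \eqref{pohozaevinf}, one has $A=n\Intrn G_\infty(u)$, whence $\vartheta_y^{2s}=\Intrn G_\infty(u)/h_y(\vartheta_y)$, so it suffices to prove $h_y(\vartheta_y)\to\Intrn G_\infty(u)$. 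The bound $h_y(\vartheta)\leq\Intrn G_\infty(u)$ holds for all $\vartheta,y$ because $w<a_\infty$ by $(A4)$. For the matching lower bound I will fix $\eps>0$, pick $R>0$ with $a(z)\geq a_\infty-\eps$ and $|\nabla a(z)\cdot z|\leq\eps$ whenever $|z|\geq R$ — possible by $(A2)$ and \eqref{antigaA5} — and pick $M>0$ with $\int_{\{|x|>M\}}F(u)\leq\eps$, which is legitimate since $F(u)\in L^1(\Rn)$ by \eqref{condiF2} and the fractional Sobolev embedding. Since $\vartheta_y>1$, for $|x|\leq M$ and $|y|\geq R+M$ we have $|\vartheta_y(x+y)|\geq|x+y|\geq|y|-M\geq R$, so $w(\vartheta_y(x+y))\geq a_\infty-\eps(1+\frac1n)$ there; on $\{|x|>M\}$ one has $w\geq\inf_{\Rn}a>0$ by $(A1)$ and $(A3)$, and $F(u)\geq0$ by $(f1)$. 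Hence
$$
h_y(\vartheta_y)\ \geq\ \Big(a_\infty-\eps\big(1+\tfrac1n\big)\Big)\Big(\Intrn F(u)-\eps\Big)-\frac{\lambda}{2}\Intrn u^2,
$$
and letting $|y|\to\infty$ with $\eps$ fixed, and then $\eps\to0$, we get $\Liminf_{|y|\to\infty}h_y(\vartheta_y)\geq a_\infty\Intrn F(u)-\frac{\lambda}{2}\Intrn u^2=\Intrn G_\infty(u)$. Together with the upper bound this gives $h_y(\vartheta_y)\to\Intrn G_\infty(u)$, hence $\vartheta_y\to1$.

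The step I expect to be the main obstacle is precisely this last lower bound: a priori nothing keeps $\vartheta_y$ bounded as $|y|\to\infty$, so one cannot simply freeze $\vartheta_y$ and pass to the limit inside $h_y$. What makes the estimate go through is the elementary inequality $|\vartheta_y(x+y)|\geq|x+y|$, valid since $\vartheta_y>1$: the rescaling only pushes the argument of $w$ further out, into the region where $w$ is within $\eps$ of $a_\infty$, while the contribution of $\{|x|>M\}$ — where that argument may fail to be large — is harmless because there $w\geq\inf_{\Rn}a>0$ and $F\geq0$. Boundedness of $\vartheta_y$ is then a byproduct of $\vartheta_y^{2s}=\Intrn G_\infty(u)/h_y(\vartheta_y)$ rather than something to be arranged beforehand.
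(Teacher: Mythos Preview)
Your proof is correct and takes a genuinely different route from the paper's. The paper argues by contradiction: it assumes a sequence $|y_j|\to\infty$ along which $\vartheta_{y_j}$ converges to some $A>1$ or to $+\infty$, then invokes the Dominated Convergence Theorem to show that $\int\big(K(\vartheta_{y_j}x+y_j)F(u)-\tfrac{\lambda}{2}u^2\big)\to\int G_\infty(u)$, whence the defining equation $A=n\vartheta_{y_j}^{2s}\,h_{y_j}(\vartheta_{y_j})$ is violated. Your approach is instead direct: you squeeze $h_y(\vartheta_y)$ between $\int G_\infty(u)$ (from $(A4)$) and a lower bound converging to it. The observation you single out --- that $\vartheta_y>1$ gives $|\vartheta_y(x+y)|\geq|x+y|$, so the rescaling only pushes the argument of $w$ further into the region where $w\approx a_\infty$ --- is what lets you avoid any a priori control on $\vartheta_y$. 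In the paper's DCT argument, by contrast, the pointwise convergence $K(\vartheta_{y_j}x+y_j)\to a_\infty$ a.e.\ in the case $\vartheta_{y_j}\to+\infty$ is asserted but in fact needs a short justification (one passes to a further subsequence along which $y_j/\vartheta_{y_j}$ either converges or escapes to infinity). Your argument sidesteps this subtlety and delivers boundedness of $\vartheta_y$ as a byproduct of the squeeze rather than as something to be arranged beforehand.
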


\begin{proof}
If $ u\in \mathcal{P}_{\infty}$, then from translation invariance, we have 
$u(\cdot-y)\in \mathcal{P}_{\infty}$, for all $y \in \Rn$. Furthermore, from Lemma \ref{pohoprojeta2},
there exists $\vartheta_y >1$ such that $u((\cdot-y)/\vartheta_y)\in \mathcal{P}$.
Suppose by contradiction
that there exists a sequence $(y_j)\subset \Rn$ 
with $|y_j|\to +\infty$ and $\vartheta_{y_j}$ converges either to $A > 1$ or $+\infty$.
Let us define
$$
K(\vartheta_{y_j} x + y_j) := a(\vartheta_{y_j}x+y_j) + \Frac{\nabla a(\vartheta_{y_j}x+y_j)\cdot(\vartheta_{y_j}x+y_j)}{n}.
$$ 
From conditions (f1)-(f2) we have 
$0\leq K(\vartheta_{y_j}x + y_j)F(u(x)) < a_{\infty}F(u(x)) \leq  Cu^2(x)$
for a.e.\ $x\in\R^n$ and for some positive constant $C$. Hence, by Lebesgue Dominated Convergence Theorem, we get
\begin{equation}
\label{succ-conv}
\Lim_{j\to \infty}\Intrn \Big(K(\vartheta_{y_j} x + y_j)F(u) - \lambda\Frac{u^2}{2}\Big) =\Intrn G_{\infty}(u).
\end{equation}
But for each $y_j$ it follows that $u(\frac{\cdot-y_j}{\vartheta_{y_j}})\in \mathcal{P}$ with $\vartheta_{y_j}>1$, which means we have
\begin{equation}
\Frac{n-2s}{2}\Intrn |(-\Delta)^{s/2}u|^{2} = n\vartheta^{2s}_{y_j}\Intrn 
\Big(K(\vartheta_{y_j}x + y_j)F(u) - \lambda\Frac{u^2}{2}\Big). 
\label{theta1}
\end{equation}
The right-hand side of formula \eqref{theta1} goes to $+\infty$ or to $nA^{2s}\int G_{\infty}(u)$, 
while the left-hand side is constant. In the first case 
we immediately get a contradiction. In the
second case, as $u\in \mathcal{P}_{\infty}$ and $A>1$, we get a contradiction too.
\end{proof}

\noindent
Under the assumption of Lemma~\ref{thetay}, we have the following

\begin{lem}\label{limitheta}
$\displaystyle\sup_{y\in \Rn} \vartheta_y = \bar{\vartheta}<+\infty$ 
and $\bar{\vartheta}>1$.
\end{lem}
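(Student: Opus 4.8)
The plan is to argue by contradiction, exactly mirroring the scheme of Lemma~\ref{thetay}. Suppose $\sup_{y\in\Rn}\vartheta_y=+\infty$; then there is a sequence $(y_j)\subset\Rn$ with $\vartheta_{y_j}\to+\infty$. We cannot immediately invoke Lemma~\ref{thetay} because that lemma only controls $\vartheta_{y_j}$ when $|y_j|\to+\infty$; so first I would dispose of the bounded case. If $(y_j)$ is bounded, pass to a subsequence with $y_j\to y_0$; then the right-hand side of the defining identity \eqref{theta1} for $u((\cdot-y_j)/\vartheta_{y_j})\in\mathcal P$, namely
$$
\Frac{n-2s}{2}\Intrn |(-\Delta)^{s/2}u|^{2} = n\vartheta^{2s}_{y_j}\Intrn \Big(K(\vartheta_{y_j}x + y_j)F(u) - \lambda\Frac{u^2}{2}\Big),
$$
has the integral factor converging to $\int G_\infty(u)>0$ (by $(A2)$, \eqref{antigaA5}, dominated convergence, and $u\in\mathcal P_\infty$ together with Lemma~\ref{mini}), while $\vartheta_{y_j}^{2s}\to+\infty$, so the right-hand side blows up against a constant left-hand side — a contradiction. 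If instead $(y_j)$ is unbounded, pass to a subsequence with $|y_j|\to+\infty$, and then Lemma~\ref{thetay} already gives $\vartheta_{y_j}\to 1$, contradicting $\vartheta_{y_j}\to+\infty$. Hence $\bar\vartheta<+\infty$.

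For the lower bound $\bar\vartheta>1$: since $u\in\mathcal P_\infty$, Lemma~\ref{pohoprojeta2} gives, for $y=0$, a projection constant $\vartheta_0>1$ (translation by $0$ leaves $u$ unchanged), so $\bar\vartheta\geq\vartheta_0>1$. Actually it suffices to note $u\big((\cdot-y)/\vartheta_y\big)\in\mathcal P$ with each $\vartheta_y>1$ by Lemma~\ref{thetay} (and Lemma~\ref{pohoprojeta2} for the translate $u(\cdot-y)\in\mathcal P_\infty$), so $\bar\vartheta=\sup_y\vartheta_y\geq\vartheta_0>1$ trivially.

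The main obstacle — though a mild one — is handling the bounded-$(y_j)$ case, since Lemma~\ref{thetay} as stated only covers $|y_j|\to+\infty$. The resolution is the direct dominated-convergence argument above: for $y_j$ ranging in a bounded set, the functions $x\mapsto K(\vartheta_{y_j}x+y_j)F(u(x))$ are uniformly dominated by $a_\infty F(u)\leq Cu^2\in L^1(\Rn)$ (using $(f1)$–$(f2)$, $(A1)$, $(A4)$), so after extracting $y_j\to y_0$ and $\vartheta_{y_j}\to\ell\in(1,+\infty]$ the integral converges to $\int K(\ell x+y_0)F(u)-\lambda u^2/2$ if $\ell$ is finite, or to $\int G_\infty(u)$ if $\ell=+\infty$; in either case the right-hand side of \eqref{theta1} tends to $+\infty$ (because $\vartheta_{y_j}^{2s}\to+\infty$ and the integral factor stays bounded away from $0$, the latter since $\int G_\infty(u)>0$ and, for finite $\ell>1$, $\int(K(\ell x+y_0)F(u)-\lambda u^2/2)>\frac{n-2s}{2n\ell^{2s}}\int|(-\Delta)^{s/2}u|^2>0$ as well — but one does not even need this refinement, monotonicity of $h$ in Lemma~\ref{pohoprojeta} already forces the factor positive). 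This contradicts the constancy of $\frac{n-2s}{2}\int|(-\Delta)^{s/2}u|^2$, completing the proof.
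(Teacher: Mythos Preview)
Your proof is correct and follows essentially the same approach as the paper: both split the analysis into the region where $|y|$ is large (handled directly by Lemma~\ref{thetay}) and the bounded-$y_j$ case, the latter being treated by the identical dominated-convergence argument showing the integral factor in \eqref{theta1} tends to $\int G_\infty(u)>0$ while $\vartheta_{y_j}^{2s}\to+\infty$, a contradiction. The only cosmetic difference is that the paper first fixes $R>0$ with $\vartheta_y\leq 2$ for $|y|>R$ and then argues by contradiction on $\{|y|\leq R\}$, whereas you begin with the global contradiction hypothesis and then pass to subsequences; the content is the same, and your treatment of $\bar\vartheta>1$ via $\vartheta_0>1$ is exactly what the paper leaves implicit.
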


\begin{proof}
From Lemma~\ref{thetay} there is $R>0$ such 
that $|\vartheta_y | \leq 2$ if $|y|>R$. There exists $M>0$ such that 
$\sup\{\vartheta_y:|y|\leq R\}\leq M$. In fact, suppose that there exists a sequence $(y_j)$ with $|y_j|\leq R$ such that 
$\vartheta_{y_j}\to+\infty$ as $j\to\infty$.   As in the previous lemma, 
\eqref{succ-conv} holds.
Therefore, from \eqref{theta1}, it follows
$$
\Frac{n-2s}{2}\Intrn |(-\Delta)^{s/2}u|^{2}  
= n\vartheta^{2s}_{y_j}\Big(\Intrn G_{\infty}(u)+ o_{y_j}(1)\Big).
$$
Since $\vartheta_{y_j}\to+\infty$ and the left-hand side is constant
we get a contradiction and the proof is complete. 
\end{proof}

\begin{lem}\label{infimoP}
 There exists a real number $\hat{\sigma}>0$ such that $\displaystyle\inf_{u\in \mathcal{P}}\int |(-\Delta)^{s/2}u|^{2}\geq \hat{\sigma}$.
\end{lem}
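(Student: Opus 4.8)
The plan is to argue directly from the Poho\v zaev identity that defines $\mathcal{P}$, combined with the subcritical growth estimate \eqref{condiF2} used at the \emph{critical} Sobolev exponent. First I would fix $u\in\mathcal{P}$ and rewrite identity \eqref{pohozaev} in the form
\[
\frac{n-2s}{2}\Intrn |(-\Delta)^{s/2}u|^{2} + \frac{n\lambda}{2}\Intrn u^2 = n\Intrn \Big(a(x)+\frac{\nabla a(x)\cdot x}{n}\Big)F(u).
\]
By $(A1)$ and $(A3)$ the weight $a(x)+\frac{\nabla a(x)\cdot x}{n}$ is strictly positive, while $(A4)$ bounds it above by $a_\infty$; since $F\geq 0$ by $(f1)$, the right-hand side is at most $na_\infty\int F(u)$.

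Next I would invoke \eqref{condiF2} with $p=2n/(n-2s)$ and with $\eps>0$ chosen so small that $a_\infty\eps<\lambda$. This gives $na_\infty\int F(u)\leq na_\infty\big(\frac{\eps}{2}\|u\|_2^2+C_\eps\|u\|_{2n/(n-2s)}^{2n/(n-2s)}\big)$, and absorbing the $\|u\|_2^2$ term into the left-hand side (the coefficient $\frac{n\lambda}{2}-\frac{na_\infty\eps}{2}$ is positive) leaves
\[
\frac{n-2s}{2}\Intrn |(-\Delta)^{s/2}u|^{2} \leq na_\infty C_\eps \|u\|_{2n/(n-2s)}^{2n/(n-2s)}.
\]
The fractional Sobolev inequality \cite[Theorem 6.7]{DiNezza} bounds $\|u\|_{2n/(n-2s)}^2$ by a constant multiple of $\int |(-\Delta)^{s/2}u|^2$, so, setting $q:=n/(n-2s)>1$,
\[
\frac{n-2s}{2}\Intrn |(-\Delta)^{s/2}u|^{2} \leq C\Big(\Intrn |(-\Delta)^{s/2}u|^{2}\Big)^{q}.
\]
Since $u\in\mathcal{P}$ is nonzero we have $\int |(-\Delta)^{s/2}u|^2>0$ (otherwise $u$ would be constant, hence $0$ as $u\in L^2(\Rn)$), so dividing yields $\big(\int |(-\Delta)^{s/2}u|^2\big)^{q-1}\geq \frac{n-2s}{2C}=:c_0>0$, and the claim follows with $\hat{\sigma}:=c_0^{1/(q-1)}$.

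The only delicate point is to use precisely the endpoint $p=2n/(n-2s)$ in \eqref{condiF2} (legitimate, since that estimate is stated for all $2<p\leq 2n/(n-2s)$): this is what allows the Sobolev inequality to return a bound purely in terms of the Gagliardo seminorm, the $L^2$-mass having been entirely absorbed on the left; a strictly subcritical exponent would leave a residual $\|u\|_2$ that cannot be controlled by $\int|(-\Delta)^{s/2}u|^2$ alone. I would also remark that such an argument is genuinely needed, because Lemma~\ref{manifold}(d) bounds only the full norm $\|u\|_{H^s}$ from below. A slightly shorter alternative would be to prove the estimate first on $\mathcal{P}_\infty$, where $J_\infty(u)=0$ gives $\frac{n-2s}{2n}\int|(-\Delta)^{s/2}u|^2=\int G_\infty(u)\leq a_\infty C_\eps\|u\|_{2n/(n-2s)}^{2n/(n-2s)}$ directly, and then transport it to $\mathcal{P}$ through the projection of Lemma~\ref{other-proj}, using that its dilation parameter satisfies $\vartheta<1$ together with $\int|(-\Delta)^{s/2}u(\cdot/\vartheta)|^2=\vartheta^{n-2s}\int|(-\Delta)^{s/2}u|^2$ and $n>2s$.
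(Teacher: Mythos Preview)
Your argument is correct and is essentially the same as the paper's: start from the Poho\v zaev identity, use $(A4)$ to replace the weight by $a_\infty$, apply \eqref{condiF2} at the endpoint exponent $p=2n/(n-2s)$ with $\eps<\lambda/a_\infty$ to absorb the $L^2$ term, and conclude via the critical fractional Sobolev inequality. The only cosmetic difference is bookkeeping (you move $\frac{n\lambda}{2}\|u\|_2^2$ to the left before estimating, the paper keeps it on the right as a negative term); your closing remarks on why the critical exponent is essential and the alternative route through $\mathcal{P}_\infty$ via Lemma~\ref{other-proj} are not in the paper but are sound.
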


\begin{proof}
Let $u \in \mathcal{P}$,  then $u$ satisfies \eqref{pohozaev}
and by condition $(A4)$, we have
$$
0<\Frac{n-2s}{2}\Intrn |(-\Delta)^{s/2}u|^{2}  < n\Intrn \Big(a_{\infty}F(u) - \lambda\Frac{u^2}{2}\Big).
$$
On the other hand, from condition \eqref{condiF2} with $p = 2n/(n-2s)$,
given $0<\varepsilon < \frac{\lambda}{a_\infty}$, we get
$$
0<\Frac{n-2s}{2n}\Intrn |(-\Delta)^{s/2}u|^{2}  < a_{\infty}C\Vert u\Vert^{2n/(n-2s)}_{2n/(n-2s)}.
$$
for some $C>0$. Using the fractional Sobolev inequality (cf.\ \cite[Theorem 6.5]{DiNezza}), we find $\hat C>0$ with
$$
0 < \Frac{n-2s}{2na_{\infty}C \hat C} < \Big(\Intrn |(-\Delta)^{s/2}u|^{2} \Big)^{2s/(n-2s)},
$$
which yields the assertion with $\hat{\sigma} := ( (n-2s)/(2na_{\infty}C\hat C))^{(n-2s)/2s}>0$.
\end{proof}

\begin{lem}
$p =: \displaystyle\inf_{u\in\mathcal{P}}I(u)>0$.
\end{lem}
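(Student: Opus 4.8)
The goal is to show that the minimization value $p = \inf_{u\in\mathcal{P}} I(u)$ is strictly positive. The plan is to derive a lower bound for $I(u)$ on $\mathcal{P}$ in terms of the single quantity $\int |(-\Delta)^{s/2}u|^2$, and then invoke Lemma~\ref{infimoP} to conclude. The starting point is the algebraic identity relating $I$ and $J$ that was already used in the proof of Theorem~\ref{cim}: since $J(u) = n I(u) - s\int |(-\Delta)^{s/2}u|^2$ (this follows by expanding both functionals, exactly as for $J_\infty$ and $I_\infty$), and since $J(u)=0$ for $u\in\mathcal{P}$, we get
\begin{equation*}
I(u) = \frac{s}{n}\Intrn |(-\Delta)^{s/2}u|^2,\qquad\text{for all } u\in\mathcal{P}.
\end{equation*}

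From here the argument is immediate: by Lemma~\ref{infimoP} there is $\hat\sigma>0$ with $\int |(-\Delta)^{s/2}u|^2 \geq \hat\sigma$ for every $u\in\mathcal{P}$, so $I(u)\geq (s/n)\hat\sigma$ on $\mathcal{P}$, whence $p = \inf_{u\in\mathcal{P}}I(u) \geq (s/n)\hat\sigma > 0$. One should double-check the coefficient in the identity $J = nI - s\int|(-\Delta)^{s/2}u|^2$ by a direct computation: writing $I(u) = \tfrac12\int|(-\Delta)^{s/2}u|^2 - \int (a(x)F(u) - \tfrac{\lambda}{2}u^2)$ and $J(u) = \tfrac{n-2s}{2}\int|(-\Delta)^{s/2}u|^2 - n\int((a(x)+\tfrac{\nabla a(x)\cdot x}{n})F(u) - \tfrac{\lambda}{2}u^2)$, the terms involving $\nabla a(x)\cdot x$ cancel between $-n\cdot\tfrac{\nabla a(x)\cdot x}{n}F(u)$ in $J$ and... actually they do not cancel against anything in $nI$, so one must be slightly more careful: in fact $nI(u) - J(u) = \tfrac{n}{2}\int|(-\Delta)^{s/2}u|^2 - \tfrac{n-2s}{2}\int|(-\Delta)^{s/2}u|^2 + \int \nabla a(x)\cdot x\, F(u) = s\int|(-\Delta)^{s/2}u|^2 + \int\nabla a(x)\cdot x\,F(u)$.

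So the clean identity $I(u)=(s/n)\int|(-\Delta)^{s/2}u|^2$ is \emph{not} available for the non-autonomous $\mathcal{P}$; instead one gets $I(u) = \tfrac{s}{n}\int|(-\Delta)^{s/2}u|^2 + \tfrac1n\int\nabla a(x)\cdot x\,F(u)$ for $u\in\mathcal{P}$. This is actually harmless and in fact helpful: by $(A3)$ we have $\nabla a(x)\cdot x \geq 0$ and $F\geq 0$, so the extra term is nonnegative, giving
\begin{equation*}
I(u) \geq \frac{s}{n}\Intrn |(-\Delta)^{s/2}u|^2 \geq \frac{s}{n}\hat\sigma > 0,\qquad u\in\mathcal{P},
\end{equation*}
and therefore $p \geq (s/n)\hat\sigma > 0$, as claimed.

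The only real content of this lemma is the already-proven Lemma~\ref{infimoP}; the present step is essentially a bookkeeping argument combining that quantitative bound with the $I$--$J$ identity restricted to $\mathcal{P}$, so there is no serious obstacle. The one point requiring care is the correct form of the $I$--$J$ relation in the non-autonomous case (the $\nabla a\cdot x$ term does not drop out), but as noted it enters with a favorable sign thanks to $(A3)$ and $f\geq 0$ (so $F\geq 0$), and can simply be discarded for the lower bound.
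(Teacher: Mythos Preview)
Your proof is correct and matches the paper's approach exactly: on $\mathcal{P}$ one has $I(u)=\frac{s}{n}\int |(-\Delta)^{s/2}u|^2+\frac{1}{n}\int \nabla a(x)\cdot x\,F(u)$, drop the last term by $(A3)$ and $F\geq 0$, then invoke Lemma~\ref{infimoP}. The only difference is expository---you record and repair the initial miscomputation of the $I$--$J$ relation, whereas the paper simply states the correct identity.
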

\begin{proof}
Let $u\in \mathcal{P}$, then $I(u)$ satisfies
\begin{equation}
\label{riscritt}
I(u) = \Frac{s}{n}\Intrn |(-\Delta)^{s/2}u|^{2} + \Intrn \Frac{\nabla a(x)\cdot x}{n}F(u)
\geq  \Frac{s}{n}\Intrn |(-\Delta)^{s/2}u|^{2} \geq \Frac{s\hat{\sigma}}{n}>0,
\end{equation}
by Lemma \ref{infimoP} and condition $(A3)$. This concludes the proof.
\end{proof}

\vskip2pt
\noindent
If $u\in H^s(\Rn)$ with $\int G_{\infty}(u)>0$ and $\vartheta>0$ is such that $u(\cdot/\vartheta)\in \mathcal{P}_{\infty}$, then 
\begin{equation}
I_{\infty}(u(x/\vartheta)) = \frac{s}{n}\vartheta^{n-2s}\Intrn |(-\Delta)^{s/2}u|^{2}. 
\label{funemP}
\end{equation}

\noindent
Let $c_\infty$ be defined as in \eqref{cinftydef}. Then, we have the following

\begin{lem}\label{pigual}
$p = c_\infty$.
\end{lem}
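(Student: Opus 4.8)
\textbf{Proof plan for Lemma~\ref{pigual} ($p=c_\infty$).}
The plan is to establish the two inequalities $p\le c_\infty$ and $c_\infty\le p$ separately, exploiting the projection maps between $\mathcal P$ and $\mathcal P_\infty$ developed above together with the Mountain Pass characterization $c_\infty=m$ from Theorem~\ref{cim} and the variational identity \eqref{funemP}. Throughout I would use that, by Lemma~\ref{pre-lem2} and Lemma~\ref{pre-lem1}, $c_\infty=m=\inf_{v\in\mathcal P_\infty}I_\infty(v)$, so that it suffices to compare $\inf_{\mathcal P}I$ with $\inf_{\mathcal P_\infty}I_\infty$.

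\emph{Step 1: $c_\infty\le p$.} Take any $u\in\mathcal P$. By Lemma~\ref{mini2}, $\int G_\infty(u)>0$, and by Lemma~\ref{other-proj} there is a unique $\vartheta<1$ with $u(\cdot/\vartheta)\in\mathcal P_\infty$. Using the representation \eqref{riscritt} for $I(u)$ and condition $(A3)$ we have $I(u)\ge \frac{s}{n}\int|(-\Delta)^{s/2}u|^2$. On the other hand, by \eqref{funemP}, $I_\infty(u(\cdot/\vartheta))=\frac{s}{n}\vartheta^{n-2s}\int|(-\Delta)^{s/2}u|^2$, and since $\vartheta<1$ and $n>2s$ we get $\vartheta^{n-2s}<1$, whence
$$
c_\infty=\inf_{\mathcal P_\infty}I_\infty\le I_\infty(u(\cdot/\vartheta))=\frac{s}{n}\vartheta^{n-2s}\Intrn|(-\Delta)^{s/2}u|^2<\frac{s}{n}\Intrn|(-\Delta)^{s/2}u|^2\le I(u).
$$
Taking the infimum over $u\in\mathcal P$ yields $c_\infty\le p$.

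\emph{Step 2: $p\le c_\infty$.} Conversely, take any $u\in\mathcal P_\infty$. By Lemma~\ref{mini}, $\int G_\infty(u)>0$, and by Lemma~\ref{pohoprojeta2} there is a unique $\vartheta>1$ with $u(\cdot/\vartheta)\in\mathcal P$. Now apply \eqref{riscritt} to the function $u(\cdot/\vartheta)\in\mathcal P$: since $a(x)F(v)$ with $v=u(\cdot/\vartheta)$ and the gradient term must be handled, the cleanest route is to note that for $v\in\mathcal P$ one also has the scaling identity analogous to \eqref{funemP}, namely evaluating $I$ along the path $\vartheta\mapsto I(u(\cdot/\vartheta))=\Psi(\vartheta)$ from \eqref{basic} at its maximum point $\vartheta_2$ (which is $\vartheta$ here, by uniqueness). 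Since $1$ lies in the domain of $\Psi$ and $\Psi$ attains its global maximum at $\vartheta>1$, and $\Psi(1)=I(u)$, we get $I(u(\cdot/\vartheta))=\Psi(\vartheta)\ge\Psi(1)=I(u)> I_\infty(u)$, the last strict inequality because $I_\infty(w)<I(w)$ for all $w\ne 0$ by $(A3)$–$(A4)$ (as noted just before Lemma~\ref{pohoprojeta}). Actually, for the inequality $p\le c_\infty$ we only need the weaker direction: $p\le I(u(\cdot/\vartheta))$, so it suffices to bound $I(u(\cdot/\vartheta))$ from above by $I_\infty(u)$. This is the delicate point (see below); granting it, taking the infimum over $u\in\mathcal P_\infty$ and using $\inf_{\mathcal P_\infty}I_\infty=c_\infty$ gives $p\le c_\infty$, and combined with Step 1 the lemma follows.

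\emph{Main obstacle.} The subtle step is Step 2: showing $I(u(\cdot/\vartheta))\le I_\infty(u)$ for $u\in\mathcal P_\infty$ and $\vartheta>1$ its projection constant onto $\mathcal P$. Here one must use $(A4)$ quantitatively: writing out $I(u(\cdot/\vartheta))=\frac{\vartheta^{n-2s}}{2}\int|(-\Delta)^{s/2}u|^2-\vartheta^n\int(a(\vartheta x)F(u)-\frac{\lambda}{2}u^2)$ and $I_\infty(u)=\frac12\int|(-\Delta)^{s/2}u|^2-\int G_\infty(u)$, and substituting the Pohozaev identity \eqref{pohozaevinf} for $u\in\mathcal P_\infty$ to eliminate $\int|(-\Delta)^{s/2}u|^2$, one reduces the claim to an inequality comparing $\vartheta^{n-2s}$, $\vartheta^n$, and the weighted integrals, which should follow from $a(\vartheta x)+\frac{\nabla a(\vartheta x)\cdot(\vartheta x)}{n}<a_\infty$ ($(A4)$) together with $\vartheta>1$ and the defining relation for $\vartheta$. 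I expect this to be a one- or two-line computation once the substitutions are made, mirroring the corresponding argument in \cite{raqlili}; the only real care needed is in tracking the scaling exponents $n$ versus $n-2s$ correctly, which is exactly where the nonlocal case differs from the classical $s=1$ setting.
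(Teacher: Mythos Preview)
Your Step~1 ($c_\infty\le p$) is correct and matches the paper exactly.

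Your Step~2, however, has a genuine gap: the inequality you identify as ``the delicate point'', namely $I(u(\cdot/\vartheta))\le I_\infty(u)$ for $u\in\mathcal P_\infty$ and $\vartheta>1$ its $\mathcal P$--projection constant, is \emph{false}. Indeed, writing $A:=\int|(-\Delta)^{s/2}u|^2$ and using that $u(\cdot/\vartheta)\in\mathcal P$ to substitute in the expression for $I$ (exactly as in \eqref{riscritt} after scaling), one gets
\[
I(u(\cdot/\vartheta))=\frac{s}{n}\vartheta^{n-2s}A+\frac{\vartheta^n}{n}\int\nabla a(\vartheta x)\cdot(\vartheta x)\,F(u),
\]
while $I_\infty(u)=\frac{s}{n}A$ by \eqref{funemP}. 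Since $\vartheta>1$ and the second integral is nonnegative by $(A3)$, this yields $I(u(\cdot/\vartheta))>I_\infty(u)$ strictly, so no amount of massaging with $(A4)$ will reverse the inequality. In fact you had already observed the correct direction yourself via $\Psi(\vartheta)\ge\Psi(1)=I(u)>I_\infty(u)$ before hoping for the opposite.

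The paper's argument for $p\le c_\infty$ does not compare $I$ on $\mathcal P$ with $I_\infty$ on $\mathcal P_\infty$ pointwise. Instead it fixes a ground state $w\in\mathcal P_\infty$ with $I_\infty(w)=c_\infty$, considers the translates $w_y:=w(\cdot-y)\in\mathcal P_\infty$, projects them onto $\mathcal P$ via $\vartheta_y>1$ to get $\tilde w_y\in\mathcal P$, and then shows $|I(\tilde w_y)-c_\infty|\to 0$ as $|y|\to\infty$. This works because, by Lemma~\ref{thetay}, $\vartheta_y\to 1$, and by $(A2)$ and \eqref{antigaA5} the coefficient $a(\vartheta_y x+y)\to a_\infty$ pointwise, so dominated convergence kills all the error terms. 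Taking $p\le I(\tilde w_y)$ and letting $|y|\to\infty$ gives $p\le c_\infty$. The translation-to-infinity step is essential and cannot be bypassed by a direct comparison.
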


\begin{proof}
Let $w\in H^s(\Rn)$ be a ground state solution to \eqref{auto}. Then
$w\in \mathcal{P}_{\infty}$ and
$I_{\infty}(w) = c_\infty$, by virtue of Theorem \ref{cim}. 
Set $w_y:= w(x-y),$ for any $y\in \Rn$. 
Of course $w_y\in \mathcal{P}_{\infty}$ and $I_{\infty}(w_y)= c_\infty,$
by translation invariance. 
From Lemma \ref{pohoprojeta2} we find a unique $\vartheta_y>1 $ with
$\tilde{w}_y = w_y(\cdot/\vartheta_y)\in \mathcal{P}$. 
 Therefore, we have
\begin{align*}
& |I(\tilde{w}_y) - c_\infty|  = |I(\tilde{w}_y) - I_{\infty}(w_y)|\\
& = \left|\Frac{1}{2}\Intrn |(-\Delta)^{s/2}\tilde{w}_y|^{2} - \Intrn G(x, \tilde{w}_y) - \Frac{1}{2}\Intrn |(-\Delta)^{s/2} w_y|^{2} + \Intrn G_{\infty}(w_y) \right|\\
&= \left| \Frac{1}{2}(\vartheta^{n-2s}_{y}-1)\Intrn |(-\Delta)^{s/2}w_y|^{2} - \Intrn \Big(a(x)F(\tilde{w_y})-\Frac{\lambda\tilde{w}^2_y}{2}\Big)
 + \Intrn \Big(a_{\infty}F(w_y) - \Frac{\lambda w^2_y}{2}\Big) \right|\\
&=  \left| \Frac{1}{2}(\vartheta^{n-2s}_{y}-1)\Intrn |(-\Delta)^{s/2} w|^{2} - \vartheta^n_{y}\Intrn \Big(a(x\vartheta_y+y)F(w)-\Frac{\lambda w^2}{2}\Big) 
 + \Intrn \Big(a_{\infty}F(w) - \Frac{\lambda w^2}{2}\Big) \right|\\
&= \left| \Frac{1}{2}(\vartheta^{n-2s}_{y}-1)\Intrn |(-\Delta)^{s/2} w|^{2} + (\vartheta^n_{y}-1)\Intrn \Frac{\lambda w^2}{2}
 - \vartheta^n_{y}\Intrn a(x\vartheta_y +y)F(w) + \Intrn a_{\infty}F(w)  \right|\\
&\leq \Frac{\vert\vartheta^{n-2s}_{y}-1 \vert}{2}\Intrn |(-\Delta)^{s/2} w|^{2} + \vert \vartheta^n_y-1 \vert \Intrn \Frac{\lambda w^2}{2} +
\Intrn |F(w)||a_{\infty} - \vartheta^{n}_{y}a(x\vartheta_y + y)|. 
\end{align*}
Since $\vartheta_y \to 1$ if $|y|\to+\infty$, we obtain
$$
|F(w)||a_{\infty} - \vartheta^{n}_{y}a(x\vartheta_y + y)|\to 0\quad
\text{as $|y|\to\infty$, a.e.\ in $\Rn$},
\qquad|F(w)||a_{\infty} - \vartheta^{n}_{y}a(x\vartheta_y + y)|\leq C|w|^2,  
$$
for some positive constant $C$ independent of $y$. 
By Lebesgue Dominated Convergence Theorem,
$$
\Intrn |F(w)||a_{\infty} - \vartheta^{n}_{y}a(x\vartheta_y + y)| =o_y(1), \,\quad
\text{as $|y|\to\infty$.}
$$
In turn, we conclude that $|I(\tilde{w}_y) - c_\infty|\leq o_y(1) $, as $|y|\to\infty$. 
Then, $p=\inf_{u\in\mathcal{P}}I(u)\leq c_\infty$.
On the other hand, consider $u\in \mathcal{P}$ and let
$0<\vartheta <1$ by Lemma \ref{other-proj}
be such that $u(\cdot/\vartheta)\in \mathcal{P}_{\infty}$. 
Since $u\in \mathcal{P}$, then
\begin{align*}
I(u) &= \Frac{s}{n}\Intrn |(-\Delta)^{s/2}u|^{2} + \Frac{1}{n}\Intrn \nabla a(x)\cdot xF(u) 
> \Frac{s}{n}\Intrn |(-\Delta)^{s/2}u|^{2} \\
& \geq \frac{s}{n}\vartheta^{n-2s}\Intrn |(-\Delta)^{s/2}u|^{2} 
=I_{\infty}(u(x/\vartheta))\geq \inf_{u\in {\mathcal P}_\infty} I_\infty(u)=m=c_\infty,
\end{align*}
in light of \eqref{funemP}, $(A3)$ and Lemma~\ref{pre-lem2}. 
Hence $p=\inf_{u\in \mathcal{P}}I(u)\geq c_{\infty}$,
which concludes the proof.
\end{proof}

\begin{lem}
\label{naturale}
$\mathcal{P}$ is a natural constraint for \eqref{problema}.
\end{lem}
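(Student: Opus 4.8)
The plan is to show that any critical point of the constrained functional $I|_{\mathcal P}$ is a free critical point of $I$, i.e.\ a weak solution of \eqref{problema}. By the Lagrange multiplier rule, if $u\in\mathcal P$ is a critical point of $I|_{\mathcal P}$, then there exists $\mu\in\mathbb R$ such that $I'(u)=\mu J'(u)$ in $H^s(\R^n)^*$. The goal is to prove $\mu=0$. First I would test this identity with $u$ itself to get the scalar relation $I'(u)(u)=\mu\, J'(u)(u)$. By part $(c)$ of Lemma~\ref{manifold} we already know $J'(u)(u)<0$ for every $u\in\mathcal P$, so it suffices to show that $I'(u)(u)=0$ whenever $u\in\mathcal P$ and $u$ is a constrained critical point; this will force $\mu=0$.

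The key point is to extract a second scalar identity so that the two relations together pin down $\mu$. For this I would exploit the scaling $u_\vartheta:=u(\cdot/\vartheta)$ used throughout Section~4: recall from the proof of Lemma~\ref{pohoprojeta} that $\Psi(\vartheta):=I(u_\vartheta)$ satisfies $\Psi'(\vartheta)=J(u_\vartheta)/\vartheta$, and that for $u\in\mathcal P$ the value $\vartheta=1$ is the unique critical point of $\Psi$, in fact its global maximum. Since $u$ is a critical point of $I$ on $\mathcal P$ and the curve $\vartheta\mapsto u_\vartheta$ passes through $u$ at $\vartheta=1$ while staying near $\mathcal P$ only tangentially, the cleanest route is: differentiate the Lagrange identity along a well-chosen variation. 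Concretely, apply $I'(u)=\mu J'(u)$ to the vector $v:=\frac{d}{d\vartheta}\big|_{\vartheta=1}u_\vartheta$, which equals $-x\cdot\nabla u$ (interpreted in $H^s$, using that for a solution-type $u$ this lies in the dual pairing domain). Then $I'(u)(v)=\Psi'(1)=J(u)=0$ because $u\in\mathcal P$, so $0=\mu\, J'(u)(v)$. It remains to check $J'(u)(v)\neq 0$; computing $J'(u)(v)=\frac{d}{d\vartheta}\big|_{\vartheta=1}J(u_\vartheta)=\vartheta\Psi''(\vartheta)+\Psi'(\vartheta)\big|_{\vartheta=1}=\Psi''(1)$, and since $\vartheta=1$ is a strict global max of $\Psi$ one expects $\Psi''(1)<0$, or at worst one argues by the sign/monotonicity information already recorded (the function $h$ in \eqref{charatcht} is strictly increasing, giving strict concavity-type behavior at the unique critical point). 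Hence $\mu=0$.

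Alternatively — and this is the argument I would actually write out, since it avoids delicate regularity questions about $x\cdot\nabla u$ — one combines the two already-available facts directly. From $I'(u)(u)=\mu J'(u)(u)$ and $J'(u)(u)<0$, it is enough to show $I'(u)(u)=0$. But a critical point $u$ of $I|_{\mathcal P}$ satisfies $I'(u)(w)=0$ for all $w$ tangent to $\mathcal P$ at $u$; the tangent space is $\ker J'(u)$, and since $J'(u)(u)\neq 0$ we have the splitting $H^s(\R^n)=\ker J'(u)\oplus\mathbb R u$. Writing an arbitrary $w=w_0+tu$ with $w_0\in\ker J'(u)$, the Lagrange condition gives $I'(u)(w)=I'(u)(w_0)+tI'(u)(u)=t\,I'(u)(u)$, so the relation $I'(u)=\mu J'(u)$ is equivalent to $I'(u)(u)=\mu J'(u)(u)$ alone. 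Thus I genuinely need the extra scaling identity: I would invoke that $u$ lies on $\mathcal P$, so $J(u)=0$, and that along $\vartheta\mapsto u_\vartheta$ one has $\frac{d}{d\vartheta}I(u_\vartheta)\big|_{\vartheta=1}=J(u)=0$; since this curve is admissible (its tangent at $\vartheta=1$ must be decomposed against $\ker J'(u)$), matching the $\mathbb R u$-component yields a linear system in $\mu$ whose determinant is, up to sign, $J'(u)(u)\cdot\Psi''(1)$, nonzero by Lemma~\ref{manifold}$(c)$ and the strict maximality of $\vartheta=1$. Therefore $\mu=0$ and $u$ solves \eqref{problema}.

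The main obstacle is the regularity/pairing issue: to differentiate along the scaling curve I am using $\frac{d}{d\vartheta}u_\vartheta\in H^s(\R^n)$, which is not automatic for a general $u\in\mathcal P$ (it requires $x\cdot\nabla u\in H^s$, i.e.\ some extra decay/smoothness). The honest fix is to note that $\Psi(\vartheta)=I(u_\vartheta)$ is a smooth real function of $\vartheta$ for \emph{every} $u\in H^s(\R^n)$ — its explicit formula \eqref{basic} involves only $\|(-\Delta)^{s/2}u\|_2^2$ and integrals of $a(\vartheta x)F(u)$, which are manifestly $C^1$ in $\vartheta$ — and $\Psi'(\vartheta)=J(u_\vartheta)/\vartheta$ was computed rigorously in Lemma~\ref{pohoprojeta} without any such pairing. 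So the second scalar identity $\frac{d}{d\vartheta}I(u_\vartheta)\big|_{\vartheta=1}=0$ is legitimate, and the chain-rule statement $\frac{d}{d\vartheta}I(u_\vartheta)=I'(u_\vartheta)(\partial_\vartheta u_\vartheta)$ only needs to be read off from these closed-form expressions rather than from an abstract differentiability of $I$ along the curve. Once this is phrased at the level of the real functions $\Psi$ and $\vartheta\mapsto J(u_\vartheta)$, the argument closes cleanly: $\mu\,\Psi''(1)=0$ with $\Psi''(1)<0$ forces $\mu=0$, whence $I'(u)=0$ and $u$ is a nontrivial solution of \eqref{problema}.
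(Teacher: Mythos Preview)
Your strategy has a genuine gap at exactly the point you flag yourself. The Lagrange condition $I'(u)+\mu J'(u)=0$ is an identity in $H^{-s}(\R^n)$, so the only way to extract scalar information from it is to pair with elements of $H^s(\R^n)$. You want to pair with the scaling direction $v=\partial_\vartheta u_\vartheta|_{\vartheta=1}=-\,x\cdot\nabla u$, but for a general $u\in\mathcal P$ this need not lie in $H^s(\R^n)$. Your proposed fix does not close the gap: it is perfectly true that $\Psi(\vartheta)=I(u_\vartheta)$ and $\vartheta\mapsto J(u_\vartheta)$ are smooth real functions of $\vartheta$ from their closed-form expressions, and that $\Psi'(1)=J(u)=0$ and $\Psi''(1)<0$; but none of these identities contains $\mu$. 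The relation ``$\mu\,\Psi''(1)=0$'' that you need arises only by inserting $v$ into the Lagrange identity and invoking the chain rule $I'(u)(v)=\Psi'(1)$, $J'(u)(v)=\Psi''(1)$ --- and that step is precisely the one requiring $v\in H^s$. Reading off closed-form $\vartheta$-derivatives does not substitute for this pairing.

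The paper avoids this difficulty, and the route is instructive: from $I'(u)+\mu J'(u)=0$ one sees that $u$ is a weak solution of the \emph{modified} semilinear equation
\[
(1+\mu(n-2s))(-\Delta)^s u+\lambda(1+\mu n)u=\big[(1+\mu n)a(x)+\mu\,\nabla a(x)\cdot x\big]f(u),
\]
and then one applies the rigorous Poho\v zaev identity of \cite{R-OS} to \emph{this} equation, obtaining a second scalar relation $Q(u)=0$. Substituting the original relation $J(u)=0$ into $Q(u)=0$ produces
\[
-\mu\, s(n-2s)\int|(-\Delta)^{s/2}u|^2=(n+1)\mu\int\Big(\nabla a(x)\cdot x+\frac{x\cdot{\mathcal H}_a(x)\cdot x}{n+1}\Big)F(u),
\]
whose two sides have opposite signs by $(A5)$ unless $\mu=0$. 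Morally this is exactly your ``test against the scaling direction'' (the Poho\v zaev identity is the rigorous incarnation of that test), but the appeal to \cite{R-OS} replaces the problematic chain rule by a theorem that has already absorbed the required regularity and cut-off arguments. Note also that hypothesis $(A5)$ on the Hessian of $a$ is what makes the sign argument work; this is hidden in your computation of $\Psi''(1)$ through the monotonicity of $h$.
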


\begin{proof} 
If $u\in {\mathcal P}$ is a critical point of $I|_\mathcal{P}$, 
there exists $\mu\in \mathbb{R}$ with
$I'(u) + \mu J'(u) =0.$ The proof is complete as soon as we show that
$\mu= 0$. Computing $I'(u)(\varphi) + \mu J'(u)(\varphi)$ for any 
$\varphi\in H^s(\Rn)$ yields
\begin{align*}
0 &=\Intrn (-\Delta)^{s/2} u(-\Delta)^{s/2} \varphi + \lambda u\varphi - \Intrn a(x)f(u)\varphi \\
&+ \mu\Big[(n-2s)\Intrn (-\Delta)^{s/2} u (-\Delta)^{s/2}\varphi - n\Intrn\Big(\Big(a(x)+\Frac{\nabla a(x)\cdot x}{n}\Big)f(u)\varphi-\lambda u\varphi\Big)\Big]. 
\end{align*}
so that $u$ satisfies the equation
\begin{equation*}
(1+\mu(n-2s))(-\Delta)^{s} u + \lambda(1+\mu n)u = \left[(1+\mu n)a(x) + \mu\nabla a(x)\cdot x \right]f(u). 
\end{equation*}
The solutions of this equation satisfy a 
Poho\v zaev identity $Q(u)=0$, where
\begin{equation*}
Q(u) = \Frac{(1+\mu(n-2s))(n-2s)}{2}\Intrn|(-\Delta)^{s/2} u|^2 - n\Intrn \widehat G(x,u) 
-\Intrn x\cdot \widehat G_{x}(x,u),
\end{equation*}
where we have
\begin{align*}
\widehat G(x,u) &= \left((1+\mu n)a(x) + \mu\nabla a(x)\cdot x\right)F(u) - \lambda\Frac{(1+\mu n)}{2}u^2, \\
x \cdot \widehat G_{x}(x,u) &=  \left((1+\mu+\mu n)\nabla a(x)\cdot x + \mu x\cdot {\mathcal H}_a(x)\cdot x\right)F(u).
\end{align*}
Therefore, $Q$ rewrites as follows
\begin{align*}
Q(u)&= \Frac{(1+\mu(n-2s))(n-2s)}{2}\Intrn |(-\Delta)^{s/2} u)|^2   \\
& -  n\Intrn \left((1+\mu n)a(x) + \mu \nabla a(x)\cdot x\right)F(u)-\lambda\Frac{(1+\mu n)}{2}u^2 \\
&-\Intrn \left((1+\mu+\mu n)\nabla a(x)\cdot x + \mu x\cdot {\mathcal H}_a(x)\cdot x \right)F(u) \\
& =  \Frac{(1+\mu(n-2s))(n-2s)}{2}\Intrn |(-\Delta)^{s/2} u|^2  \\
& -  n(1+\mu n)\Intrn \Big(a(x)+ \Frac{\nabla a(x)\cdot x}{n}\Big)F(u) - \lambda\Frac{u^2}{2} \\
& -  (n+1)\mu\Intrn \Big( \nabla a(x)\cdot x +\Frac{x\cdot {\mathcal H}_a(x)\cdot x}{n+1}\Big)F(u).
\end{align*}
Recalling that $u\in \mathcal{P}$ and substituting \eqref{pohozaev} 
in the equation above, it follows that
\begin{align*}
Q(u) &=\Frac{(1+\mu(n-2s))(n-2s)}{2}\Intrn |(-\Delta)^{s/2} u|^2 - (1+\mu n)\Frac{(n-2s)}{2} \Intrn |(-\Delta)^{s/2} u|^2 \\
&-(n+1)\mu\Intrn \Big( \nabla a(x)\cdot x +\Frac{x\cdot {\mathcal H}_a(x)\cdot x}{n+1}\Big)F(u)\\
& = -\mu s(n-2s)\Intrn |(-\Delta)^{s/2} u|^2 
- (n+1)\mu\Intrn \Big( \nabla a(x)\cdot x +\Frac{x\cdot {\mathcal H}_a(x)\cdot x}{n+1}\Big)F(u).
\end{align*}
On the other hand, since $u$ satisfies $Q(u) = 0$, we end up with
$$
-\mu s(n-2s)\Intrn |(-\Delta)^{s/2} u|^2 = (n+1)\mu\Intrn \Big( \nabla a(x)\cdot x +\Frac{x\cdot {\mathcal H}_a(x)\cdot x}{n+1}\Big)F(u).
$$
From $(A5)$ we have that, if $\mu>0$, the right-hand side of the equation is nonnegative as
$$
\nabla a(x)\cdot x +\Frac{x\cdot {\mathcal H}_a(x)\cdot x}{n+1}\geq\frac{n}{n+1}
\Big(\nabla a(x)\cdot x +\Frac{x\cdot {\mathcal H}_a(x)\cdot x}{n}\Big)\geq 0,
$$
while the left-hand side is negative. If $\mu<0$ one gets 
the same contradiction. Whence $\mu=0$.
\end{proof}


\vskip2pt
\noindent
{\em Proof of Theorem~\ref{nonexist} concluded.}
Assume by contradiction that there exists 
a critical point  $z\in H^s(\Rn)$ of 
 $I$ at level $p$. In particular, $z\in \mathcal{P}$ and $I(z)=p$. 
 Let $\vartheta \in (0,1)$ be such that $z(\cdot /\vartheta)\in \mathcal{P}_{\infty}$. Then
\begin{align*}
p = I(z)&= \Frac{s}{n}\Intrn |(-\Delta)^{s/2}z|^{2} + \Frac{1}{n}\Intrn \nabla a(x)\cdot xF(z) \\
&> \Frac{s}{n}\Intrn |(-\Delta)^{s/2}z|^{2}>\frac{s}{n}\vartheta^{n-2s}\Intrn |(-\Delta)^{s/2}z|^{2} \\
&= I_{\infty}(z(\cdot/\vartheta))\geq 
\inf_{u\in {\mathcal P}_\infty} I_\infty(u)=m=c_\infty\;,
\end{align*}
using $(A3)$ and \eqref{funemP}, Lemma~\ref{pre-lem2}
and Theorem~\ref{cim}. Then $p>c_\infty$, contradicting Lemma~\ref{pigual}.
In particular the infimum $p$ is not achieved, otherwise, if $I(v)=p$
and $I'|_{{\mathcal P}}(v)=0$ for some $v\in H^s(\Rn)$, in light
of Lemma~\ref{naturale}, we would have $I'(v)=0$, contradicting the
first part of Theorem~\ref{nonexist}.
\qed

\medskip

\section{Existence results}

In this section we show the existence of a  solution of problem \eqref{problema}.  To this aim, we shall assume that the hypotheses of Theorem~\ref{exist} are satisfied.
As we have seen in the previous sections, we should look for solutions which have energy levels above
$c_{\infty}$. In order to find such a solution we follow some
ideas of \cite{ACR} based upon linking and the barycenter 
function on the Nehari manifold. 
In our case, since the nonlinear terms 
of the equation are not homogeneous, we are led to the 
Poho\v zaev manifold $\mathcal{P}$  and obtain the desired solution by a linking argument. 
We also make use of a barycenter function, similar to that of \cite{ACR} and  used by G.S.\ Spradlin \cite{spradlin,spradlin2} as well. 

\begin{lem}
\label{mpass-cond}
$I$ satisfies the geometrical properties of the Mountain Pass theorem.
\end{lem}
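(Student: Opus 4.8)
The plan is to verify the two classical Mountain Pass geometry conditions for the functional $I$ on $H^s(\Rn)$: (i) there exist $\rho,\delta>0$ such that $I(u)\geq\delta$ whenever $\|u\|_{H^s}=\rho$, and (ii) there exists $e\in H^s(\Rn)$ with $\|e\|_{H^s}>\rho$ and $I(e)<0$. For (i), I would start from the estimate already used repeatedly in the excerpt: writing $I(u)=\frac12\|(-\Delta)^{s/2}u\|_2^2+\frac\lambda2\|u\|_2^2-\int a(x)F(u)$, hypothesis $(A1)$ gives an upper bound $a(x)\le\|a\|_{L^\infty}$, and then the growth estimate \eqref{condiF2} together with the fractional Sobolev embedding \cite[Theorem 6.7]{DiNezza} yields
$$
I(u)\geq\frac12\|u\|_{H^s}^2-\frac{\varepsilon}{2}\|a\|_{L^\infty}\frac{1}{\lambda}\int\lambda u^2-C_\varepsilon\|a\|_{L^\infty}\|u\|_{H^s}^p
\geq\Big(\frac12-\frac{\varepsilon\|a\|_{L^\infty}}{2\lambda}\Big)\|u\|_{H^s}^2-\tilde C\|u\|_{H^s}^p.
$$
Choosing $\varepsilon>0$ small so that the quadratic coefficient is positive and using $p>2$, one finds $\rho>0$ small with $I(u)\geq\delta>0$ on the sphere $\|u\|_{H^s}=\rho$. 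This is the routine part.

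For (ii), the idea is exactly the fibering/dilation argument already carried out in Lemma~\ref{first-lem} and in the proof of Lemma~\ref{pohoprojeta}: fix any $v\in H^s(\Rn)\setminus\{0\}$ with $\int G_\infty(v)>0$ (such $v$ exists, e.g.\ a suitably scaled bump, since $G_\infty(s)\sim\frac12(a_\infty-\lambda)s^2>0$ for small $s>0$ by $(A2)$, $(f1)$), and consider the curve $\vartheta\mapsto v(\cdot/\vartheta)$. From the computation \eqref{basic},
$$
I(v(x/\vartheta))=\frac{\vartheta^{n-2s}}{2}\int|(-\Delta)^{s/2}v|^2-\vartheta^n\int\Big(a(\vartheta x)F(v)-\frac{\lambda}{2}v^2\Big),
$$
and since $n>2s$ and $\int a(\vartheta x)F(v)-\frac\lambda2 v^2\to\int G_\infty(v)>0$ as $\vartheta\to\infty$ (by $(A2)$, \eqref{antigaA5} and dominated convergence, exactly as in Lemma~\ref{pohoprojeta}), the term $-\vartheta^n\int G_\infty(v)(1+o_\vartheta(1))$ dominates, so $I(v(x/\vartheta))\to-\infty$. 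Hence for $\vartheta$ large enough, $e:=v(\cdot/\vartheta)$ satisfies $I(e)<0$ and, enlarging $\vartheta$ further if necessary, $\|e\|_{H^s}>\rho$.

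The main point to be careful about — and the only place a genuine obstacle could hide — is the lower-order behavior near $u=0$: one needs $a_\infty>\lambda$ (hypothesis $(A2)$) and $(f1)$ merely to know the nonlinear term is superquadratic and does not destroy the local coercivity, and one needs $\inf_{\Rn}a>0$ so that $\int a(\vartheta x)F(v)$ stays positive in the limit; all of this is guaranteed by $(A1)$–$(A2)$ and $(f1)$–$(f2)$. I would then conclude by noting that $I\in C^1(H^s(\Rn),\R)$ and $I(0)=0$, so both geometric hypotheses of the Mountain Pass theorem (in the form \cite{ambrorabino}, without assuming a Palais–Smale condition at this stage) are satisfied, which is precisely the statement of the lemma.
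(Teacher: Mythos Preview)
Your argument is correct in substance, with one slip: you claim $G_\infty(s)\sim\tfrac12(a_\infty-\lambda)s^2>0$ for \emph{small} $s>0$, but by $(f1)$ one has $F(s)=o(s^2)$ near $0$, so in fact $G_\infty(s)\sim-\tfrac\lambda2 s^2<0$ there. The asymptotic $G_\infty(s)\sim\tfrac12(a_\infty-\lambda)s^2>0$ holds for \emph{large} $s$, by $(f2)$ and $(A2)$; alternatively the ground state $w$ of the limiting problem automatically has $\int G_\infty(w)>0$ via \eqref{pohozaevinf}. With this correction your dilation argument for part (ii) goes through as written.

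Your route for (ii) differs from the paper's. The paper first builds a path $\gamma\in\Gamma_\infty$ from the ground state $w$ via Lemma~\ref{first-lem}, so that $I_\infty(\gamma(1))<0$, and then \emph{translates} by $y$ with $|y|$ large, using $(A2)$ and dominated convergence to get $I(\gamma_y(1))=I_\infty(\gamma(1))+o_y(1)<0$. You work purely with dilations, exploiting that $a(\vartheta x)\to a_\infty$ a.e.\ as $\vartheta\to\infty$, so the integral $\int\big(a(\vartheta x)F(v)-\tfrac\lambda2 v^2\big)\to\int G_\infty(v)>0$ directly. Both rely on the same asymptotic flatness of $a$; yours is slightly more self-contained, while the paper's translation construction has the side benefit of producing a family of paths $\gamma_y\in\Gamma$, which is exactly what is reused in the proof of Lemma~\ref{equalc} when comparing $c$ with $c_\infty$.
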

\begin{proof}
On one hand, for the local minimum 
condition at the origin,
by \eqref{condiF2} one can argue exactly  as in the proof of Lemma~\ref{pre-lem1}.
On the other hand, if $w \in H^{s}(\Rn)$ is a least energy solution to \eqref{auto},
by Lemma \ref{first-lem} there exists $\gamma \in \Gamma_{\infty}$ such that $\gamma(t)=w(x/tL)$
for $t>0$ and $L>0$ large enough. In turn,
if $\gamma_y(t):=w((\cdot -y)/tL)$, by $(A2)$ and Lebesgue Dominated Convergence Theorem,
$$
I(\gamma_y(1))=I_\infty(\gamma_y(1))+\int (a_\infty-a(x+y))F(\gamma(1))=
I_\infty(\gamma(1))+o_y(1)<0,
\quad\text{for $|y|$ large},
$$
since $I_\infty(\gamma(1))<0$, concluding the proof.
\end{proof}

\noindent
Let $c$ be the min-max mountain pass level for $I$ 
\begin{equation} \label{minmax}
c= \min_{\gamma\in \Gamma}\max_{t\in [0,1]}I(\gamma(t)),
\qquad 
\Gamma:=\left\lbrace\gamma\in C([0,1],H^s(\Rn)):\, \gamma(0)=0,\, I(\gamma(1))<0\right\rbrace.
\end{equation}
We start by proving 
that the min-max levels of the Mountain Pass Theorem for 
$I$ and $I_\infty$ agree.

\begin{lem} \label{equalc}
$c_\infty = c$.
\end{lem}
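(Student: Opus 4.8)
The plan is to prove the two inequalities $c\le c_\infty$ and $c_\infty\le c$ separately, exploiting the comparison $I_\infty(u)<I(u)$ for all $u\in H^s(\R^n)\setminus\{0\}$ together with the Pohozaev-projection machinery already developed.

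For $c_\infty\le c$, I would take an arbitrary path $\gamma\in\Gamma$ and show that $\max_{t\in[0,1]}I_\infty(\gamma(t))\ge c_\infty$, hence a fortiori $\max_t I(\gamma(t))\ge c_\infty$, and then take the infimum over $\Gamma$. The natural way is to argue as in Lemma~\ref{pre-lem1}: using \eqref{condiF2} and the fractional Sobolev inequality one shows $J_\infty>0$ on a small punctured ball, while $J_\infty(\gamma(1))\le nI_\infty(\gamma(1))<nI(\gamma(1))<0$; by continuity there is $\sigma\in(0,1)$ with $\gamma(\sigma)\in\mathcal P_\infty$. Then $\max_t I_\infty(\gamma(t))\ge I_\infty(\gamma(\sigma))\ge\inf_{\mathcal P_\infty}I_\infty=m=c_\infty$ by Lemma~\ref{pre-lem2} and Theorem~\ref{cim}. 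This requires knowing $I(\gamma(1))<0\Rightarrow I_\infty(\gamma(1))<0$, which is immediate from $I_\infty<I$.

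For $c\le c_\infty$, the idea is to build, from a ground state $w$ of \eqref{auto}, a competitor path for $I$ whose maximal energy is close to $c_\infty$. Translating $w$ far away, set $w_y:=w(\cdot-y)\in\mathcal P_\infty$ with $I_\infty(w_y)=c_\infty$, and consider the scaling path $t\mapsto w_y(\cdot/(tL))$ as in Lemma~\ref{first-lem} and Lemma~\ref{mpass-cond}, suitably reparametrised to lie in $\Gamma$ (one needs $I(\gamma_y(1))<0$, which holds for $|y|$ large by Lemma~\ref{mpass-cond}). Along this path $I(w_y(\cdot/\vartheta))=\Psi(\vartheta)$ in the notation of \eqref{basic}, and by Lemma~\ref{pohoprojeta} $\Psi$ has a unique critical point which is its global maximum, attained at the projection scale $\vartheta_2(w_y)$; moreover by Lemma~\ref{thetay} $\vartheta_2(w_y)=\vartheta_y\to1$ as $|y|\to\infty$. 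Hence $\max_t I(\gamma_y(t))=I(\widetilde w_y)$ with $\widetilde w_y=w_y(\cdot/\vartheta_y)\in\mathcal P$, and the computation already carried out in the proof of Lemma~\ref{pigual} gives $|I(\widetilde w_y)-c_\infty|=o_y(1)$ as $|y|\to\infty$. Therefore $c\le\inf_{|y|\,\text{large}}\max_t I(\gamma_y(t))\le c_\infty$.

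Combining the two inequalities yields $c=c_\infty$. The main obstacle is the second inequality: one must be careful that the scaled-and-translated path genuinely belongs to $\Gamma$ (the endpoint condition $I(\gamma_y(1))<0$) and that the maximum of $I$ along it is exactly $I(\widetilde w_y)$ — this rests on the monotonicity/unimodality of $\Psi$ from Lemma~\ref{pohoprojeta} and on the convergence $\vartheta_y\to1$ from Lemma~\ref{thetay}, after which the estimate $|I(\widetilde w_y)-c_\infty|\to0$ is exactly the computation done in Lemma~\ref{pigual}. The first inequality is routine given Lemmas~\ref{pre-lem1}--\ref{pre-lem2} and Theorem~\ref{cim}.
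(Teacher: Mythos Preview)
Your argument is correct, but it is more elaborate than the paper's and takes a somewhat different route.

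For the inequality $c_\infty\le c$, the paper dispenses with the Poho\v zaev intersection entirely: since $I_\infty\le I$, any $\gamma\in\Gamma$ satisfies $I_\infty(\gamma(1))\le I(\gamma(1))<0$, so $\Gamma\subset\Gamma_\infty$; then directly
\[
c_\infty=\inf_{\Gamma_\infty}\max_{[0,1]}I_\infty\le\inf_{\Gamma}\max_{[0,1]}I_\infty\le\inf_{\Gamma}\max_{[0,1]}I=c.
\]
Your detour through Lemma~\ref{pre-lem1} and Lemma~\ref{pre-lem2} reaches the same conclusion but is not needed.

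For $c\le c_\infty$, the paper also translates far away, but it translates an \emph{arbitrary} $\varepsilon$-optimal path $\gamma\in\Gamma_\infty$ rather than the specific ground-state scaling path: setting $\tau_y\gamma(t):=\gamma(t)(\cdot-y)$, one has $\tau_y\gamma\in\Gamma$ for $|y|$ large (Lemma~\ref{mpass-cond}) and, by translation invariance of $I_\infty$ together with $I(\tau_y u)-I_\infty(u)\to 0$ uniformly on the compact set $\gamma([0,1])$, $\max_{[0,1]}I(\tau_y\gamma)\le c_\infty+\varepsilon+o_y(1)$. This avoids invoking the unimodality of $\Psi$, Lemma~\ref{thetay}, and the computation from Lemma~\ref{pigual}. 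Your approach, by contrast, pins down the exact maximum along the translated scaling path as $I(\widetilde w_y)$ via the Poho\v zaev projection and then quotes the estimate $|I(\widetilde w_y)-c_\infty|=o_y(1)$. Both work; the paper's argument is shorter and more self-contained, while yours gives a more precise identification of the maximizer and reuses machinery already in place.
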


\begin{proof}
If $\gamma\in \Gamma$, then $I(\gamma(1))<0$ and 
since $I_{\infty}\leq I$, we have $I_{\infty}(\gamma(1))<0$. Then, $\Gamma\subset \Gamma_\infty$ yielding
$$
c_\infty =\inf_{\gamma\in \Gamma_\infty}\max_{t\in [0,1]}I_{\infty}(\gamma(t))
\leq \displaystyle\inf_{\gamma\in \Gamma}\max_{t\in [0,1]}I_{\infty}(\gamma(t))\leq 
\inf_{\gamma\in \Gamma}\max_{t\in [0,1]}I(\gamma(t))=c.
$$
Let now $\varepsilon>0$ be arbitrary and let 
$\gamma\in \Gamma_\infty$ such that $I_{\infty}(\gamma(t))\leq c_\infty + \varepsilon$, for all $t\in [0,1]$. 
Choose $y\in \Rn$ and  translating 
$\tau_y(\gamma(t))(x):=\gamma(t)(x-y)$ with $|y|$ large enough, we 
get $\tau_y\circ \gamma\in \Gamma$
(see Lemma \ref{mpass-cond}). If $t_y\in [0,1]$ is such that $I(\tau_y(\gamma(t_y)))$ is the maximum value on $[0,1]$ of $t\mapsto I(\tau_y\circ \gamma(t))$, then 
$$
c_\infty+\varepsilon \geq I_{\infty}(\gamma(t_y))=
I_{\infty}(\tau_y\circ\gamma(t_y))=
\max_{[0,1]}I(\tau_y\circ\gamma)\geq 
\inf_{\gamma\in \Gamma}\max_{t\in [0,1]}I(\gamma(t))=c.
$$
This gives $c_\infty \geq c$ by the arbitrariness of $\eps$
and the assertion follows.
\end{proof}

\begin{lem} \label{equalcp}
$p=c$.
\end{lem}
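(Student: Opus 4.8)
The plan is to show both inequalities $p\le c$ and $c\le p$, exploiting the fact that $p=c_\infty$ (Lemma~\ref{pigual}) together with $c=c_\infty$ (Lemma~\ref{equalc}). Since those two lemmas are already available in the excerpt, the cleanest route is simply to chain them: $p=c_\infty=c$. However, I expect the authors intend a more self-contained argument that directly compares the Mountain Pass level $c$ for $I$ with the Poho\v zaev infimum $p=\inf_{u\in\mathcal P}I(u)$, so I will describe that as well.

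\textbf{First inequality: $p\le c$.} Here I would show that every Mountain Pass path for $I$ must cross the manifold $\mathcal P$. Introduce the Poho\v zaev functional $J$ from Lemma~\ref{manifold}. Observe, exactly as in the proof of Lemma~\ref{pre-lem1}, the algebraic identity $J(u)=nI(u)-s\int|(-\Delta)^{s/2}u|^2$, and that by part~(a) of Lemma~\ref{manifold} there is $\rho>0$ with $J(u)>0$ whenever $0<\|u\|_{H^s}\le\rho$. Given any $\gamma\in\Gamma$, we have $J(\gamma(0))=0$ and $J(\gamma(1))\le nI(\gamma(1))<0$, so by continuity there is $\sigma\in(0,1)$ with $\|\gamma(\sigma)\|_{H^s}\ge\rho$ and $J(\gamma(\sigma))=0$, i.e.\ $\gamma(\sigma)\in\mathcal P$. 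Then
$$
\max_{t\in[0,1]}I(\gamma(t))\ge I(\gamma(\sigma))\ge\inf_{u\in\mathcal P}I(u)=p,
$$
and taking the infimum over $\gamma\in\Gamma$ yields $c\ge p$.

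\textbf{Second inequality: $p\le c$ reversed, i.e.\ $c\le p$.} For this direction I would take an arbitrary $u\in\mathcal P$ and build a competitor path in $\Gamma$ whose maximal energy is $I(u)$. Since $u\in\mathcal P$ we have $\int G_\infty(u)>0$ by Lemma~\ref{mini2}, so the scaling path $\vartheta\mapsto u(\cdot/\vartheta)$ is well-defined; as shown in the proof of Lemma~\ref{pohoprojeta}, the function $\Psi(\vartheta)=I(u(\cdot/\vartheta))$ is positive for small $\vartheta$, negative for large $\vartheta$, and has its unique critical point---a global maximum---at the value $\vartheta$ with $u(\cdot/\vartheta)\in\mathcal P$; since $u$ itself lies in $\mathcal P$, that maximum is attained at $\vartheta=1$ with value $I(u)$. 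Choosing $L>0$ large enough that $\Psi(L)<0$ and reparametrising $\gamma(t):=u(\cdot/(tL))$ for $t>0$, $\gamma(0):=0$, gives $\gamma\in\Gamma$ with $\max_{t}I(\gamma(t))=I(u)$. Hence $c\le I(u)$ for every $u\in\mathcal P$, so $c\le p$. Combined with the first step, $p=c$.

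\textbf{Main obstacle.} The delicate point is the continuity of the scaling path at $\vartheta=0$ in $H^s(\R^n)$ and the verification that $\Psi$ really is negative for large $\vartheta$ (so that $\gamma(1)$ can be pushed below zero and $\gamma\in\Gamma$); this uses $(A2)$, \eqref{antigaA5}, and the Lebesgue Dominated Convergence Theorem precisely as in Lemma~\ref{pohoprojeta}, so it can be quoted rather than redone. A subtler issue is ensuring the unique critical point of $\Psi$ is at $\vartheta=1$: this follows because $u\in\mathcal P$ is exactly the condition $\Psi'(1)=0$, together with the uniqueness of the critical point established in Lemma~\ref{pohoprojeta}. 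Once these are in place the rest is routine; alternatively, one bypasses all of it by simply invoking Lemma~\ref{pigual} and Lemma~\ref{equalc} to get $p=c_\infty=c$.
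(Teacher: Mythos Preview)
Your ``cleanest route'' is exactly the paper's proof: the authors simply write that the assertion follows by combining Lemma~\ref{pigual} ($p=c_\infty$) with Lemma~\ref{equalc} ($c_\infty=c$), in one sentence. Your expectation that they give a self-contained direct argument is mistaken.

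Your alternative direct argument is essentially correct but unnecessary here; note one slip: in the nonautonomous case the identity is $J(u)=nI(u)-s\int|(-\Delta)^{s/2}u|^2-\int\nabla a(x)\cdot x\,F(u)$, not the formula from Lemma~\ref{pre-lem1} (which is for $J_\infty$). Since the extra term is nonnegative by $(A3)$, your inequality $J(\gamma(1))\le nI(\gamma(1))<0$ still holds, and the crossing argument is in fact the content of Lemma~\ref{inter}, stated just after this lemma in the paper.
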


\begin{proof}
The assertion follows by combining Lemma~\ref{pigual} and Lemma~\ref{equalc}.
\end{proof}

\noindent
Now we observe the following property of $\mathcal{P}$ with respect to the paths in the Mountain Pass Theorem.

\begin{lem}\label{inter}
For every $ \gamma \in \Gamma$ there exists $s\in (0,1)$ such that $\gamma(s)\in \mathcal{P}$.
\end{lem}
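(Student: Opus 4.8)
The plan is to show that along any path $\gamma\in\Gamma$, the Pohozaev functional $J$ changes sign, so that by continuity it must vanish somewhere strictly between the endpoints. Recall from Lemma~\ref{manifold}(a) (and its proof) that there is $\rho>0$ such that $J(u)>0$ whenever $0<\|u\|_{H^s}<\rho$; the same computation shows $J(u)>0$ for small $u\neq 0$ because the quadratic term $\tfrac{n-2s}{2}\|u\|_{H^s}^2$ dominates the superquadratic remainder controlled by \eqref{condiF2} and $(A4)$. This handles the behaviour near $\gamma(0)=0$.

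First I would examine the relation between $J$ and $I$. Writing out the definitions, $J(u) = nI(u) - s\int|(-\Delta)^{s/2}u|^2 - \int \nabla a(x)\cdot x\,F(u)$. Since $F\ge 0$ by $(f1)$ and $\nabla a(x)\cdot x\ge 0$ by $(A3)$, we get $J(u)\le nI(u) - s\int|(-\Delta)^{s/2}u|^2 \le nI(u)$. Therefore at the endpoint $\gamma(1)$, where $I(\gamma(1))<0$ by definition of $\Gamma$, we obtain $J(\gamma(1)) \le nI(\gamma(1)) < 0$.

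Now combine the two observations. The map $t\mapsto J(\gamma(t))$ is continuous on $[0,1]$ (since $J$ is $C^1$ on $H^s(\R^n)$ and $\gamma$ is continuous). Near $t=0$ we have $\gamma(t)\to 0$, hence $\|\gamma(t)\|_{H^s}<\rho$ for $t$ small, and $\gamma(t)\neq 0$ for such $t$ (otherwise, if $\gamma$ were identically zero on a neighbourhood of $0$, one could still find a later parameter at which $\gamma$ first leaves the ball of radius $\rho$ with positive $J$-value; more carefully, since $I(\gamma(1))<0$ forces $\gamma(1)\neq 0$, let $t_0:=\sup\{t : \gamma(t)=0\}<1$, and for $t$ slightly above $t_0$ we have $0<\|\gamma(t)\|_{H^s}<\rho$, so $J(\gamma(t))>0$). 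In either reading there is a parameter $t_\ast<1$ with $J(\gamma(t_\ast))>0$, while $J(\gamma(1))<0$. By the intermediate value theorem applied to the continuous function $J\circ\gamma$ on $[t_\ast,1]$, there exists $s\in(t_\ast,1)\subset(0,1)$ with $J(\gamma(s))=0$. Since at such a point $\|\gamma(s)\|_{H^s}\ge\sigma>0$ by Lemma~\ref{manifold}(d) — or simply because $J(\gamma(s))=0$ and $0$ is isolated in $J^{-1}(\{0\})$ — we have $\gamma(s)\neq 0$, hence $\gamma(s)\in\mathcal{P}$.

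The only delicate point is the bookkeeping near $t=0$: one must ensure that the path genuinely enters the region $\{0<\|u\|_{H^s}<\rho\}$ with $J>0$ rather than sitting at the origin, which is why I isolate the last time $\gamma$ visits $0$ and use that $\gamma(1)\neq 0$. Everything else is a direct consequence of the sign of $J$ for small nonzero $u$ (Lemma~\ref{manifold}(a)), the inequality $J\le nI$ coming from $(A3)$ and $(f1)$, and continuity.
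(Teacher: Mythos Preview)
Your proof is correct and follows essentially the same approach as the paper: both use the identity $J(u)=nI(u)-s\int|(-\Delta)^{s/2}u|^2-\int\nabla a(x)\cdot x\,F(u)$, deduce $J(\gamma(1))\leq nI(\gamma(1))<0$ from $(A3)$ and $F\geq 0$, invoke the positivity of $J$ near the origin from Lemma~\ref{manifold}(a), and conclude by the intermediate value theorem. Your treatment of the behaviour near $t=0$ (isolating the last time $\gamma$ visits $0$) is in fact more careful than the paper's, which simply records $J(\gamma(0))=0$ and $J(\gamma(1))<0$ and asserts the existence of a zero with $\|\gamma(t)\|_{H^s}>\rho$ without spelling out this bookkeeping.
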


\begin{proof}
By the proof of Lemma \ref{manifold} $(a)$, we learn that
 there exists $\rho >0$ such that $J(u)>0$ if $0 < \Vert u \Vert_{H^s} < \rho$. Furthermore, we have
\begin{align*}
J(u) &= \Frac{n-2s}{2}\Intrn |(-\Delta)^{s/2} u|^2 - n\Intrn G(x,u) - \Intrn\nabla a(x)\cdot x\, F(u) \\
&= n I(u) - s\Intrn |(-\Delta)^{s/2} u|^2 -\Intrn\nabla a(x)\cdot x\,F(u).
\end{align*}
From $(A3)$ it follows that $J(u) < nI(u),$ for every $u\in H^s(\Rn)\setminus\{0\}$.
If $\gamma \in \Gamma$, 
we have $J(\gamma(0))=0$ and $J(\gamma(1)) <nI(\gamma(1)) <0$.
Then there exists $t\in(0,1)$ with
$\Vert \gamma(t)\Vert_{H^s} > \rho$ and $J(\gamma(t)) = 0$.
\end{proof}

\noindent
We recall that a sequence $(u_j)$ is said to be a Cerami sequence for 
$I$ at level $d $ in $\R$, denoted by $(Ce)_d$, if  $I(u_n)\to d $ and $\Vert I'(u_j)\Vert_{H^{-s}}(1+ \Vert u_j\Vert_{H^s} )\to 0$. We have the following

\begin{lem}\label{climitada}
If $(u_j)$ is a $(Ce)_d$ sequence with $d >0$, then it has a bounded subsequence.
\end{lem}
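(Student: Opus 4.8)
The plan is the standard contradiction argument: suppose $(u_j)$ is a $(Ce)_d$ sequence with $d>0$ that is unbounded in $H^s(\Rn)$, i.e.\ $\|u_j\|_{H^s}\to\infty$ along a subsequence, and derive a contradiction with $d>0$. The key observation is that the quantity $Q(s)=\frac12 f(s)s-F(s)$ from hypothesis $(f3)$ controls the difference between the energy and its derivative: more precisely, from $I(u_j)$ and $\langle I'(u_j),u_j\rangle$ one extracts
\[
I(u_j)-\frac12\langle I'(u_j),u_j\rangle=\Intrn a(x)\Big(\tfrac12 f(u_j)u_j-F(u_j)\Big)=\Intrn a(x)Q(u_j).
\]
Since $I(u_j)\to d$ and $\|I'(u_j)\|_{H^{-s}}(1+\|u_j\|_{H^s})\to0$ gives $\langle I'(u_j),u_j\rangle\to0$, the left-hand side converges to $d$, so $\Intrn a(x)Q(u_j)\to d$. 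Using $\inf_{\Rn}a>0$ from $(A1)$ and $Q\ge0$ (which follows from $(f3)$, taking $s=0$), this yields a uniform bound $\Intrn Q(u_j)\le C$ for some constant $C$.

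Next I would use a Lions-type concentration argument to derive a contradiction from the unboundedness. Set $v_j:=u_j/\|u_j\|_{H^s}$, so $\|v_j\|_{H^s}=1$. Either $v_j$ vanishes in the sense of Lions (i.e.\ $\sup_{y\in\Rn}\int_{B_1(y)}|v_j|^2\to0$), in which case by the fractional vanishing lemma $v_j\to0$ in $L^q$ for every $q\in(2,2n/(n-2s))$; or there exist $y_j\in\Rn$ with $\int_{B_1(y_j)}|v_j|^2\ge\delta>0$ along a subsequence, so that $\tilde v_j:=v_j(\cdot+y_j)\rightharpoonup \tilde v\not\equiv0$ weakly in $H^s$. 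In the non-vanishing case, on the set where $\tilde v\ne0$ one has $|u_j(\cdot+y_j)|=\|u_j\|_{H^s}|\tilde v_j|\to\infty$ a.e., so by $(f3)$ (second part, $Q(s)\to+\infty$) and Fatou's lemma $\Intrn Q(u_j)=\Intrn Q(u_j(\cdot+y_j))\to+\infty$, contradicting the uniform bound just obtained. In the vanishing case, one shows directly that $\Intrn a(x)F(u_j)=o(\|u_j\|_{H^s}^2)$: indeed by $(f1)$–$(f2)$ we have $0\le F(s)\le \tfrac12 s^2+C_\eps' |s|^q$ type bounds, but since $f(s)/s\to1$, the genuinely troublesome quadratic part is $\tfrac12\Intrn a(x)u_j^2$, which combined with $\tfrac\lambda2\Intrn u_j^2$ and the $(-\Delta)^{s/2}$ term keeps $I(u_j)/\|u_j\|_{H^s}^2$ from going to $0$ only if $\|(-\Delta)^{s/2}v_j\|_2^2$ stays bounded away from zero; here one uses $a_\infty>\lambda$ together with $(A4)$ (which gives $a(x)<a_\infty$) to see the quadratic form $\Intrn|(-\Delta)^{s/2}u|^2+\lambda\Intrn u^2-\Intrn a(x)u^2$ need not be coercive, and this is exactly where the $Q$-bound must be invoked again to rule out the direction where it degenerates.

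\emph{Main obstacle.} The delicate point is the vanishing case: unlike superquadratic problems, asymptotic linearity means $I(u_j)/\|u_j\|_{H^s}^2$ does not obviously stay positive, since the leading term $\tfrac12\int|(-\Delta)^{s/2}u_j|^2+\tfrac12\int a(x)u_j^2-\int a(x)F(u_j)$ can approach zero when $v_j$ concentrates (in frequency) on the region where $f(s)/s\approx1$. The resolution, following the scheme of \cite{raqlili} adapted to the nonlocal setting, is to combine the bound $\Intrn a(x)Q(u_j)\to d>0$ with the first part of $(f3)$ in the quantitative form $Q(u_j)\le D\,Q(t)$ for $u_j\le t$: this lets one compare $\int Q(u_j)$ over level sets and forces $\int_{\{u_j\le M\}}u_j^2$ and $\int_{\{u_j> M\}}u_j^2$ to be controlled by $d$ and the $H^s$-norm respectively in a way incompatible with $\|u_j\|_{H^s}\to\infty$ under vanishing. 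I expect the careful handling of these level-set splittings, together with the fractional Sobolev embedding and the vanishing lemma for $H^s(\Rn)$, to be the technical heart of the argument; the non-vanishing case is comparatively immediate via Fatou and $Q(s)\to+\infty$.
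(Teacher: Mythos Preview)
Your overall architecture matches the paper's: argue by contradiction assuming $\|u_j\|_{H^s}\to\infty$, normalize $v_j=u_j/\|u_j\|_{H^s}$, apply the Lions dichotomy, and use the identity
\[
I(u_j)-\tfrac12\,I'(u_j)(u_j)=\int a(x)\,Q(u_j)\longrightarrow d.
\]
The non-vanishing branch is essentially correct, with one subtlety you omit: since $f(s)=0$ for $s\le 0$ one has $Q\equiv 0$ on $(-\infty,0]$, so Fatou combined with $Q(s)\to+\infty$ only yields a contradiction if $u_j(\cdot+y_j)\to+\infty$ (not merely $|u_j|\to\infty$) on a set of positive measure. The paper secures this by testing the translated normalized equation against its negative part and deducing $\bar u\ge 0$; your sketch skips this step.

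The genuine gap is the vanishing case. Your proposed ``level-set splitting'' of $\int u_j^2$ does not contain a mechanism that forces a contradiction: vanishing of $v_j$ gives $v_j\to 0$ in $L^p$ only for $2<p<2n/(n-2s)$ and says nothing about $\|v_j\|_2$, and the monotonicity inequality $Q(s)\le DQ(t)$ for $s\le t$ by itself does not prevent $\|u_j\|_2\to\infty$ while $\int Q(u_j)$ stays bounded. The paper's argument here is different and is precisely the missing idea. Fix $L>2\sqrt{dD}$ and evaluate $I$ at the \emph{bounded} point $Lv_j$: by \eqref{condiF2} and vanishing, $I(Lv_j)\ge L^2/4-o_j(1)$. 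Since $L/\|u_j\|_{H^s}\in(0,1)$ for $j$ large, the maximum $I(t_ju_j)=\max_{t\in[0,1]}I(tu_j)$ dominates $I(Lv_j)$, and one checks $t_j\in(0,1)$. Then $I'(t_ju_j)(t_ju_j)=0$, and the first part of $(f3)$ (applied pointwise with $0\le t_ju_j\le u_j$ on $\{u_j\ge 0\}$, and trivially on $\{u_j<0\}$ where $Q\equiv 0$) gives
\[
I(t_ju_j)=\int a(x)\,Q(t_ju_j)\le D\int a(x)\,Q(u_j)=Dd+o_j(1),
\]
contradicting $L^2/4>Dd$. It is this ray-evaluation trick, not a level-set decomposition, that closes the vanishing case.
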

\begin{proof}
By contradiction, let $\Vert u_j\Vert_{H^s}\to+\infty$. If $\hat{u}_j:=u_j\|u_j\|^{-1}_{H^s} $,
 then $\| \hat{u}_j\|_{H^s} = 1$ and $\hat{u}_j\rightharpoonup \hat{u}$, up to a subsequence. Therefore, one of the two cases occur:
\begin{align*}
&\text{Case} \ 1:\quad
\Limsup_{j\to \infty}\sup_{y\in \Rn}\Int_{B_1(y)}|\hat{u}_j|^2=\delta>0, \\
&\text{Case} \ 2:\quad
\Limsup_{j\to \infty}\sup_{y\in \Rn}\Int_{B_1(y)}|\hat{u}_j|^2=0.
\end{align*}
Suppose Case 2 hold. Fixing $L>2\sqrt{dD}$, with $D$ as in 
assumption (f3), gives
$$
I(L u_j\|u_j\|_{H^s}^{-1}) = \Frac{L^2}{2}- \Intrn a(x)F(L u_j\|u_j\|_{H^s}^{-1}).
$$
 Given $\varepsilon>0$, by inequality \eqref{condiF2} 
 there exists $C_\eps>0$ (here $2<p<2n/(n-2s)$) with
 $$
 \Intrn a(x)F(L u_j\|u_j\|_{H^s}^{-1})< \frac{a_{\infty}\eps L^2}{2}\frac{\|u_j\|^2_2}{\lambda\|u_j\|_2^2+\|(-\Delta)^{s/2} u_j\|_2^2}
 + C_\eps L^p\|\hat{u}_j\|_p^p\leq \frac{a_{\infty}\eps L^2}{2\lambda}+o_j(1),
 $$
 where $\|\hat{u}_j\|_p\to 0$ by a variant of Lions' Lemma \cite[Lemma I.1]{lions}.
 For $\varepsilon = \lambda/(2a_{\infty})$, we have
 $$
 I(L u_j\|u_j\|_{H^s}^{-1}) \geq  \Frac{L^2}{4} - o_j(1).
 $$
We have $L\|u_j\|^{-1}_{H^s}\in (0,1)$ for $j$ large and if we consider
$t_j\in (0,1)$ with $I(t_ju_j) = \max\limits_{t\in[0,1]}I(tu_j)$,
 \begin{equation}
 \label{basso}
 I(t_ju_j)=\max_{t\in [0,1]}I(tu_j)\geq  I(L u_j\|u_j\|_{H^s}^{-1}) \geq  \Frac{L^2}{4} - o_j(1).
 \end{equation}
 On the other hand, using $(f3)$ we obtain
 \begin{align} \label{NQNOVA2}
I(t_ju_j)&= I(t_ju_j) - \Frac{1}{2}I'(t_ju_j)(t_ju_j)=
 \Intrn a(x)\Big( \Frac{1}{2}f(t_ju_j)(t_ju_j) - F(t_ju_j)\Big) \\
 &\leq 
 \nonumber
  D \Intrn a(x)\Big(\Frac{1}{2}f(u_j)u_j - F(u_j)\Big)  =
 \nonumber
  D(I(u_j) - \Frac{1}{2}I'(u_j)u_j))
= D d + o_j(1). 
 \end{align}
 Then, on account of the choice of $L$, combining
 \eqref{basso} and \eqref{NQNOVA2}, we get a contradiction.
In  Case 1, let $(y_j)$ be a sequence such that $|y_j|\to+\infty$ 
and 
\begin{equation}
\label{bassocaso1}
\Int_{B_1(y_j)}|\hat{u}_j|^2 > \delta/2.
\end{equation}
Recalling that 
$\hat{u}_j(\cdot+ y_j)\rightharpoonup \bar{u}$ in $H^s(\Rn)$
as $j\to\infty$,  we obtain
$\int_{B_1(0)} |\bar{u}(x)|^2 > \delta/2,$
namely $\bar{u}\neq 0$. Thus, there exists $\Omega \subset B_1(0)$, with $|\Omega| >0$ such that
\begin{equation}
\label{inft-pf}
0 \neq \bar{u}(x) = \displaystyle \lim_{j\to \infty} \hat{u}_j(x+ y_j) = \displaystyle\lim_{j\to \infty} \Frac{u_j(x+ y_j)}{\Vert u_j\Vert_{H^s}},
\quad\,\,  \text{a.e.\ $x\in \Omega$},
\end{equation}
yielding $u_j(x+ y_j)\to\infty$ for a.e.\ $x\in \Omega$. 
We claim that, actually
$u_j(x+ y_j)\to+\infty$ for $x\in \Omega$. Setting 
$\zeta_j(x):=\hat u_j(x+y_j)$, for a $\mu_j\to 0$ in $H^{-s}(\Rn)$
as $j\to\infty$, we have
$$
(-\Delta)^{s/2}\zeta _j+\lambda \zeta_j=
\frac{a(x+y_j)}{\|u_j\|_{H^s}}f(\|u_j\|_{H^s}\zeta_j)+\frac{\mu_j}{\|u_j\|_{H^s}}.
$$
Testing this equation by $\zeta_j^-$ and taking unto account that
$$
\int \frac{a(x+y_j)}{\|u_j\|_{H^s}}f(\|u_j\|_{H^s}\zeta_j)\zeta_j^-=0,
\qquad \frac{\langle \mu_j,\zeta_j^-\rangle}{\|u_j\|_{H^s}}
=\frac{\langle \mu_j, u_j^-(\cdot+y_j)\rangle}{\|u_j\|_{H^s}^2}
=o_j(1),
$$
by arguing as around formula \eqref{conput-pos}, we conclude that 
$\|\zeta_j^-\|_{H^s}=o_j(1)$ as $j\to\infty$, hence in particular by 
the fractional Sobolev embedding $\|\zeta_j^-\|_{L^p}=o_j(1)$ as $j\to\infty$ for any $2\leq p\leq 2n/(n-2s)$. Since 
$\zeta_j=\hat u_j(\cdot+ y_j)\to \bar{u}$ in $L^p(\Omega)$, we also have
$\zeta_j^-=\hat u_j^-(\cdot+ y_j)\to \bar{u}^-$ in $L^p(\Omega)$. But then
$\bar u^-=0$ on $\Omega$ which means $\bar u>0$ on $\Omega$. In turn,
from~\ref{inft-pf}, we have the claim.
Thus, by $(f3)$,  Fatou Lemma and $(A1)$, with $\sigma:=\inf_{\Rn} a$, 
\begin{align*}
&\Liminf_{j\to \infty}\Intrn a(x)\Big( \frac{1}{2}f(u_j)u_j - F(u_j)\Big) \\
&=\Liminf_{j\to \infty}\Intrn a(x+y_j)\Big( \frac{1}{2}f(u_j(x+ y_j))u_j(x+y_j) - F(u_j(x+y_j)\Big) \\
&\geq \Liminf_{j\to \infty}\Int_{\Omega} \sigma\Big(\frac{1}{2} f(u_j(x+ y_j))u_j(x+y_j) - F(u_j(x+y_j)\Big) \\
&\geq  \Int_{\Omega}\Liminf_{j\to \infty} \sigma\Big( \frac{1}{2}f(u_j(x+ y_j))u_j(x+y_j) - F(u_j(x+y_j)\Big)=+ \infty.
\end{align*}
On the other hand, 
$|I'(u_j)u_j|\leq \Vert I'(u_j)\Vert_{H^{-s}} \Vert u_j \Vert_{H^s} \to 0,$ 
as $j\to\infty$. Then,
$$
\Intrn a(x)\Big(\Frac{1}{2}f(u_j)u_j - F(u_j)\Big) = 
I(u_j) - \Frac{1}{2}I'(u_j)u_j = d +o_j(1),
$$
which gives a contradiction. If, 
instead, $(y_j)$ in \eqref{bassocaso1} is bounded,  
say $|y_j|\leq R$ for some $R$, we obtain
$$
\Frac{\delta}{2}\leq \Int_{B_1(y_j)}|\hat{u}_j|^2 
\leq \Int_{B_{2R}(0)}|\hat{u}_j|^2,
$$
and since $\hat{u}_j \to \hat  u$ in $L^2(B_{2R}(0))$, it follows that
$$
\delta/2 \leq \Int_{B_{2R}(0)}|\hat u|^2.
$$
Similarly to the previous case, there exists $\Omega \subset B_{2R}(0)$  of positive measure such that \eqref{inft-pf} holds.
The argument follows as above for the case where $(y_j)$ 
is unbounded and we get a contradiction. 
\end{proof}


\noindent
The next step is to show the existence of a Cerami sequence for the functional $I$ at level $c$.

\begin{lem}\label{ceramip4}
Let $c$ be as in (\ref{minmax}), then
there exists a $(Ce)_c$ sequence
 $(u_n)\subset H^s(\Rn)$.
\end{lem}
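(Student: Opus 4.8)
The plan is to produce a Cerami sequence for $I$ at the mountain pass level $c$ by a standard deformation/quantitative argument, using the fact (Lemma~\ref{mpass-cond}) that $I$ has the mountain pass geometry. The natural tool is the version of the Mountain Pass Theorem that yields Cerami sequences rather than mere Palais--Smale sequences; this is the Cerami variant of the classical result (see e.g.\ \cite{ambrorabino} together with the Cerami condition, or Ekeland's variational principle applied on the complete metric space $\Gamma$). First I would record that $\Gamma\neq\emptyset$ (which follows from Lemma~\ref{mpass-cond}, since the endpoint of a suitable translated rescaled least-energy path of the limiting problem has negative $I$-energy) and that $c>0$ (which follows, for instance, from the local minimum geometry at the origin: there are $\rho,\alpha>0$ with $I(u)\geq\alpha$ whenever $\|u\|_{H^s}=\rho$, exactly as in the proof of Lemma~\ref{pre-lem1}). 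This gives the genuine mountain pass structure needed to invoke the abstract theorem.

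Concretely, I would apply Ekeland's variational principle to the functional $\Phi(\gamma):=\max_{t\in[0,1]}I(\gamma(t))$ on the complete metric space $\Gamma$ equipped with the uniform distance, to obtain a sequence of paths $\gamma_n\in\Gamma$ that are almost minimizers of $\Phi$ and ``almost critical'' in the path sense. From each near-optimal path one extracts points $u_n:=\gamma_n(t_n)$ near where the maximum along $\gamma_n$ is attained; the standard deformation argument then shows that these points satisfy $I(u_n)\to c$ and, crucially, $\|I'(u_n)\|_{H^{-s}}(1+\|u_n\|_{H^s})\to 0$, which is precisely the $(Ce)_c$ property. The presence of the weight $(1+\|u_n\|_{H^s})$ — the distinguishing feature of Cerami sequences over Palais--Smale sequences — is what one gets for free from this refined minimax scheme, and it is exactly what Lemma~\ref{climitada} needs in order to rule out unbounded sequences later.

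The main obstacle is not really an obstacle here: it is simply invoking the correct abstract theorem and verifying its hypotheses in the nonlocal setting. Since $I$ is $C^1$ on $H^s(\R^n)$ (by $(f1)$, the growth condition \eqref{condiF2} and the fractional Sobolev embedding \cite[Theorem 6.7]{DiNezza}), the quantitative deformation lemma applies verbatim; the nonlocal character of $(-\Delta)^s$ does not interfere with the minimax machinery, which is purely functional-analytic once one works in the Hilbert space $(H^s(\R^n),\|\cdot\|_{H^s})$. One should be slightly careful that $\Gamma$ as defined in \eqref{minmax} uses the strict inequality $I(\gamma(1))<0$; this is harmless, since $\{u:I(u)<0\}$ is open and one can always slide the endpoint slightly, so the space of admissible paths is still complete for the purposes of Ekeland's principle and the deformation argument respects the endpoint constraints ($\gamma(0)=0$ being a strict local minimum and $I(\gamma(1))<0$ being an open condition stable under small perturbations). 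Thus I would conclude that a $(Ce)_c$ sequence exists, completing the proof.
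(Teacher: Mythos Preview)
Your argument is correct: the mountain pass geometry established in Lemma~\ref{mpass-cond} together with the $C^1$ regularity of $I$ is enough to run the standard Cerami-variant of the Mountain Pass Theorem (via the quantitative deformation lemma, or Ekeland's principle on the path space with a fixed endpoint), and this yields a $(Ce)_c$ sequence exactly as you describe. Your remark about the open endpoint condition is slightly informal---$\Gamma$ as written is not complete---but the usual fix (either fix $z_1$ with $I(z_1)<0$ and work with paths ending at $z_1$, or argue directly by deformation without invoking completeness of $\Gamma$) is routine and does not affect the conclusion.

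The paper, however, takes a different route: it invokes the Ghoussoub--Preiss general mountain pass principle \cite{ekeland,Preiss}, using the Poho\v zaev manifold $\mathcal{P}$ as the closed set separating $z_0=0$ from a point $z_1$ with $I(z_1)<0$. The separation is checked via the sign of the Poho\v zaev functional $J$: one has $J>0$ on a punctured ball about the origin (proof of Lemma~\ref{manifold}(a)) and $J(z_1)<nI(z_1)<0$, so $0$ and $z_1$ lie in different components of $H^s(\Rn)\setminus\mathcal{P}$. Ghoussoub--Preiss then produces a sequence $(u_j)$ with $I(u_j)\to c$, $\|I'(u_j)\|(1+\|u_j\|_{H^s})\to 0$, and additionally $\delta(u_j,\mathcal{P})\to 0$ in the geodesic metric. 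Your approach is lighter---it avoids verifying the separation property of $\mathcal{P}$---while the paper's approach yields the extra localization $\delta(u_j,\mathcal{P})\to 0$, which is natural given the central role of $\mathcal{P}$ throughout, even if this particular piece of information is not exploited in the subsequent arguments.
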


\begin{proof}
We apply the Ghoussoub-Preiss theorem \cite[Theorem 6]{ekeland} with $X = H^s(\Rn)$,  see also  \cite{Preiss}. 
Consider $z_0 = 0$ and  $z_1$ in $H^s(\Rn)$
with $I(z_1)< 0$ (cf.\ Lemma \ref{mpass-cond}). 
Then the  Poho\v zaev manifold $\mathcal{P} $ separates $z_0$ and $z_1$. Indeed, observe that
$z_0 = 0 \notin \mathcal{P}$ and $z_1 \notin \mathcal{P}$, since $J(z_1) < nI(z_1)<0$ (cf.\ proof of $(a)$ of Lemma \ref{inter}). 
Moreover, there exists $\rho>0$ such that, if $0< \Vert u\Vert_{H^s} < \rho$,  then $J(u)>0$ (cf.\ proof of Lemma \ref{manifold}). 
We have $H^s(\Rn)\setminus \mathcal{P}=\left\lbrace 0 \right\rbrace \cup \left\lbrace  J>0\right\rbrace 
\cup\left\lbrace J<0\right\rbrace $.
The ball $B_{\rho}(z_0)$ is in a connected component $C_1$ of 
$\left\lbrace 0 \right\rbrace \cup \left\lbrace 
J>0\right\rbrace$. On the other 
hand, $z_1$ is in a connected component of $\left\lbrace 
J<0\right\rbrace$.
In this setting, we get a sequence $(u_j)\subset H^s(\Rn)$ such that
\begin{equation*}
\delta(u_j,\mathcal{P})\to 0,  \quad
I(u_j)\to  c,  \quad
\Vert I'(u_j)\Vert (1+ \Vert u_j\Vert_{H^s}) \to 0, 
\end{equation*} 
where $\delta$ denotes the geodesic metric on $H^s(\Rn)$, defined by
$$
\delta(u,v):= \text{inf} \Big\{ 
\int_0^1\displaystyle\frac{\|\gamma'(\sigma)\|_{H^s}}{1 + \|\gamma(\sigma)\|_{H^s}}d\sigma :\, \gamma \in C^1([0,1], H^s(\Rn) ), \,\,
\gamma(0) = u, \,\,\gamma(1)=v \Big\}.
$$
This complets the proof.
\end{proof}

\noindent
For the following type of properties, we refer the reader to the book \cite{tinta}.

\begin{lem}\label{splitting4}
Let $(u_j)\in H^s(\Rn)$ be a bounded sequence such that 
$$
I(u_j)\rightarrow d >0 \quad \text{and} \quad \Vert I'(u_j)\Vert_{H^{-s}}(1+ \Vert u_j\Vert_{H^s})\rightarrow 0\;.
$$ 
Replacing $(u_j)$ by a subsequence, if necessary, there exists a 
solution $\bar{u}$ of \eqref{problema}, a number  
$k\in \mathbb{N}\cup \left\lbrace 0\right\rbrace$, $k$ functions 
$u^1,u^2,\ldots,u^k$ and $k$ sequences of points $y^1_j,y^2_j,\ldots,y^k_j\in\Rn$, satisfying:
\begin{itemize}
\item[a)] $u_j \rightarrow \bar{u}$ in $H^s(\Rn)$ or
\item[b)] $u^i\in H^s(\Rn)$ are positive solutions to \eqref{auto} radially symmetric about some point;
\item[c)] $|y^{i}_{n}|\to+\infty$ and $|y^{i}_{n} - y^{m}_{n}|\to+\infty, \,\, i\neq m$;
\item[d)] $u_j - \displaystyle\sum_{i=1}^{k}u^i(x-y^{i}_{j})\rightarrow \bar{u}$;
\item[e)] $I(u_j)\rightarrow I(\bar{u}) + \displaystyle\sum_{i=1}^{k}I_{\infty}(u^i)$.
\end{itemize}
\end{lem}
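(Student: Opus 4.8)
The plan is to prove this Lions-type concentration-compactness splitting by induction on the number of "bubbles" extracted. First I would fix a bounded Cerami sequence $(u_j)$ with $I(u_j)\to d>0$ and $\|I'(u_j)\|_{H^{-s}}(1+\|u_j\|_{H^s})\to 0$, so in particular $I'(u_j)\to 0$ in $H^{-s}(\R^n)$. Passing to a subsequence, $u_j\rightharpoonup\bar u$ in $H^s(\R^n)$; by the standard argument (testing $I'(u_j)$ against $\varphi\in C_c^\infty(\R^n)$, using local compactness of the embedding $H^s\hookrightarrow L^2_{loc}$ and $L^p_{loc}$ together with the growth bound on $f$ coming from (f1)--(f2)) one gets $I'(\bar u)=0$, i.e. $\bar u$ is a (possibly zero) solution of \eqref{problema}. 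Setting $u_j^{(1)}:=u_j-\bar u$, Brezis--Lieb-type splitting for the nonlocal Gagliardo seminorm and for the nonlinear term gives $I(u_j)=I(\bar u)+\tilde I_\infty(u_j^{(1)})+o_j(1)$ and $\tilde I_\infty{}'(u_j^{(1)})\to 0$, where $\tilde I_\infty$ is the functional with $a$ replaced by $a_\infty$ (this is where $(A2)$, i.e. $a(x)\to a_\infty$, enters, via dominated convergence and the decay of $a-a_\infty$). If $u_j^{(1)}\to 0$ strongly in $H^s$ we are in case (a) and stop with $k=0$.

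Otherwise apply the vanishing dichotomy of Lions' lemma to $u_j^{(1)}$: vanishing is excluded because it would force $\|u_j^{(1)}\|_{L^p}\to 0$ for $2<p<2n/(n-2s)$, hence $\tilde I_\infty(u_j^{(1)})\to \tfrac12\liminf\|u_j^{(1)}\|_{H^s}^2\ge 0$ but also $\tilde I_\infty{}'(u_j^{(1)})(u_j^{(1)})\to 0$, giving $\|u_j^{(1)}\|_{H^s}\to 0$, a contradiction. So there are $y_j^1\in\R^n$ with $\liminf_j\int_{B_1(y_j^1)}|u_j^{(1)}|^2>0$; since $u_j^{(1)}\rightharpoonup 0$, necessarily $|y_j^1|\to\infty$. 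Then $u_j^{(1)}(\cdot+y_j^1)\rightharpoonup u^1\neq 0$ in $H^s(\R^n)$, and because $a(\cdot+y_j^1)\to a_\infty$ uniformly on compact sets, passing to the limit in $\tilde I_\infty{}'(u_j^{(1)})$ shows $I_\infty'(u^1)=0$, i.e. $u^1$ solves \eqref{auto}. Invoking assumption (3) of Theorem~\ref{exist} (the isolatedness/uniqueness statement referenced in the excerpt) one may take $u^1$ to be a positive solution radially symmetric about a point, giving (b). Now set $u_j^{(2)}:=u_j^{(1)}(\cdot)-u^1(\cdot-y_j^1)=u_j-\bar u-u^1(\cdot-y_j^1)$; a second Brezis--Lieb splitting yields $I(u_j)=I(\bar u)+I_\infty(u^1)+\tilde I_\infty(u_j^{(2)})+o_j(1)$ and $\tilde I_\infty{}'(u_j^{(2)})\to 0$.

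Iterate: at each stage either $u_j^{(i+1)}\to 0$ strongly in $H^s$, in which case set $k=i$ and conclude (d), (e) (with (a) vacuous when $\bar u$ is obtained only as a weak limit but $k\ge 1$), or extract a further bubble $u^{i+1}$ with centers $y_j^{i+1}$, $|y_j^{i+1}|\to\infty$. The mutual divergence $|y_j^i-y_j^m|\to\infty$ for $i\neq m$ in (c) is forced by the fact that $u_j^{(i+1)}(\cdot+y_j^i)\rightharpoonup 0$ (since $u^i$ has been subtracted off), so a bounded difference of centers would contradict $u^{i+1}\neq 0$. The iteration terminates after finitely many steps: each extracted bubble $u^i$ solves \eqref{auto}, hence by Lemma~\ref{infimoP}-type lower bounds (a positive solution of the limiting problem has $H^s$-norm bounded below by a fixed $\sigma>0$, equivalently $I_\infty(u^i)\ge m=c_\infty>0$), and since the energies add up, $\sum_i I_\infty(u^i)\le \limsup_j I(u_j)-I(\bar u)=d-I(\bar u)<\infty$, so the number of bubbles is at most $(d-I(\bar u))/c_\infty$. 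Property (d) follows because the last remainder $u_j^{(k+1)}\to 0$ strongly, and (e) is the telescoped sum of the Brezis--Lieb identities.

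The main obstacle is the nonlocal Brezis--Lieb splitting: one must show that for the Gagliardo seminorm $[\cdot]_{H^s}$ and for translated profiles one has $[u_j]_{H^s}^2=[\bar u]_{H^s}^2+[u_j-\bar u]_{H^s}^2+o_j(1)$ and, after the translation $u_j^{(1)}(\cdot+y_j^1)$, that the cross terms between the bubble $u^1$ and the remainder vanish in $H^{-s}$; this requires care because $(-\Delta)^{s/2}$ is not local, so one works with the double-integral representation and uses that weakly convergent, spatially separating sequences have asymptotically orthogonal Gagliardo seminorms. A second delicate point is promoting the weak limit $u^{i+1}$ of $u_j^{(i)}(\cdot+y_j^i)$ to a genuine solution of the \emph{limiting} equation: one needs $a(x+y_j^i)\to a_\infty$ uniformly on compacta (from $(A2)$) and the Lipschitz/growth control on $f$ from hypothesis (1) of Theorem~\ref{exist} to pass to the limit in the nonlinear term against test functions. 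The nonnegativity and radial symmetry of the $u^i$ in (b) is then not proved here but quoted from assumption (3) together with the symmetry results referenced after \eqref{lim-intro}.
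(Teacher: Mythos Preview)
Your sketch follows the standard Lions-type profile decomposition and is essentially correct in structure. Note, however, that the paper does \emph{not} prove this lemma: it simply refers the reader to the concentration-compactness framework of Tintarev--Fieseler for the splitting (a)--(e), and then appends a one-line justification for (b). So your proposal is considerably more detailed than what the paper actually does; the iterative bubble extraction you outline is presumably what that reference contains.

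There is one genuine misattribution in your plan. You write that the positivity and radial symmetry of the bubbles $u^i$ in (b) come from assumption (3) of Theorem~\ref{exist} (isolatedness of $c_\infty$ or uniqueness of the positive radial solution). That assumption says nothing about an \emph{arbitrary} nontrivial solution of the limiting equation being positive or radial; it only controls the structure of the radial critical set near the ground-state level. The paper instead obtains (b) as follows: each $u^i$ is a nontrivial weak solution of \eqref{auto}, hence nonnegative by the test-function argument with $(u^i)^-$ carried out around formula \eqref{conput-pos} (using $f(s)=0$ for $s\le 0$), and then strictly positive and radially symmetric about some point by the fractional Gidas--Ni--Nirenberg result of Felmer--Quaas--Tan. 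You should replace your appeal to assumption (3) by this argument.

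A minor point: you invoke the Lipschitz bound on $f$ from hypothesis (1) of Theorem~\ref{exist} to pass to the limit in the nonlinear term. In fact the subcritical growth estimate \eqref{condiF2}, which follows already from (f1)--(f2), is enough for the Brezis--Lieb splitting and for identifying the weak limits as solutions; the Lipschitz condition is used elsewhere in the paper (Lemma~\ref{I2-boundd}) but is not needed here.
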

\noindent
That the solutions $u^i\in H^s(\Rn)$  to \eqref{auto} are positive 
and radially symmetric about some point follows from \cite[Theorem 1.3]{felmer}, namely
a Gidas-Ni-Niremberg type result in the fractional case
($u^i\neq 0$, $u^i\geq 0$ and hence $u^i>0$, see \cite{felmer}).


\begin{cor}\label{44}
If $I(u_j)\rightarrow c_{\infty}$ and $\Vert I'(u_j)\Vert_{H^{-s}}(1+ \Vert u_j\Vert_{H^s})\to 0$, 
then either $(u_j)$ is relatively compact in $H^s(\Rn)$ or Lemma~\ref{splitting4} 
holds with $k=1$ and $\bar{u} = 0$.
\end{cor}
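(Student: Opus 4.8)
The plan is to invoke Lemma~\ref{splitting4} with $d=c_\infty$ and then rule out all the configurations except the two allowed ones by a purely arithmetic comparison of energy levels. So first I would apply Lemma~\ref{splitting4}: up to a subsequence, there are a solution $\bar u$ of \eqref{problema}, an integer $k\in\mathbb{N}\cup\{0\}$, positive solutions $u^1,\dots,u^k$ of \eqref{auto} radially symmetric about some point, and sequences $(y^i_j)\subset\Rn$ with $|y^i_j|\to\infty$, such that $u_j-\sum_{i=1}^k u^i(\cdot-y^i_j)\to\bar u$ in $H^s(\Rn)$ and $I(u_j)\to I(\bar u)+\sum_{i=1}^k I_\infty(u^i)$.

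The key ingredients are three facts, all already available in the excerpt. First, $c_\infty>0$: indeed $c_\infty=m=p=\inf_{\mathcal P}I$ by Theorem~\ref{cim}, Lemma~\ref{pre-lem2} and Lemma~\ref{pigual}, and the unlabeled lemma just before Lemma~\ref{pigual} gives $p>0$. Second, each $u^i$ is a nontrivial solution of \eqref{auto}, hence $I_\infty(u^i)\geq m=c_\infty$ by the very definition of the least energy level $m$. Third, if $\bar u\neq 0$ then $\bar u$ is a nontrivial solution of \eqref{problema}, so it lies in $\mathcal P$ by the Poho\v zaev identity \eqref{pohozaev}, whence $I(\bar u)\geq p=c_\infty$, while trivially $I(\bar u)=0$ when $\bar u=0$.

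Then I would carry out the case analysis. Substituting the lower bounds into the energy splitting gives $c_\infty=I(\bar u)+\sum_{i=1}^k I_\infty(u^i)\geq I(\bar u)+k\,c_\infty$. If $\bar u\neq 0$, then $I(\bar u)\geq c_\infty$ forces $c_\infty\geq c_\infty+k\,c_\infty$, i.e.\ $k\,c_\infty\leq 0$, and since $c_\infty>0$ this yields $k=0$; but then item (d) (or (a)) of Lemma~\ref{splitting4} reads $u_j\to\bar u$ in $H^s(\Rn)$, so $(u_j)$ is relatively compact. If instead $\bar u=0$, the inequality becomes $c_\infty\geq k\,c_\infty$, so $k\leq 1$; the value $k=0$ would force $u_j\to 0$ and hence $I(u_j)\to 0\neq c_\infty$, a contradiction, so $k=1$ and we are precisely in the situation ``Lemma~\ref{splitting4} with $k=1$ and $\bar u=0$''. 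This exhausts the possibilities.

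There is no serious obstacle here, since all the analytic work is packaged in Lemma~\ref{splitting4}; the only point requiring care is to have recorded beforehand that $c_\infty$ equals both the least energy level $m$ of the limiting problem and $p=\inf_{\mathcal P}I>0$, so that the energy budget $c_\infty$ can afford at most one bump at infinity and cannot simultaneously accommodate a nontrivial limit $\bar u$.
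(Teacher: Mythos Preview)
Your argument is correct and is exactly the standard energy-counting one expects here. The paper itself gives no proof of this corollary; it is stated immediately after Lemma~\ref{splitting4} as a direct consequence, so there is nothing substantive to compare against.

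One small point worth tightening: Lemma~\ref{splitting4} has \emph{bounded} sequence as a hypothesis, and the corollary does not assume boundedness. Before invoking the splitting lemma you should say that, since $c_\infty>0$ (which you do establish), Lemma~\ref{climitada} yields a bounded subsequence of $(u_j)$, and then Lemma~\ref{splitting4} applies to that subsequence with $d=c_\infty$. After that, your case analysis is clean and complete.
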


\noindent
Let us set 
$$
c_\sharp:=\inf\big\{c>c_\infty: \text{$c$ is a radial critical value of $I_\infty$}\big\}.
$$
Then we have the following
\begin{lem}\label{split}
Assume that 
\begin{equation}
\label{nondeg}
\text{$c_\infty$ is an isolated radial critical level for $I_\infty$},
\end{equation}
Then $c_\sharp>c_\infty$  and $I$ satisfies condition $(Ce)$ at level $d\in (c_\infty, \min\{c_\sharp,2c_\infty\})$.
Assume now that
\begin{equation}
\label{uniq-ass}
\text{the limiting problem~\eqref{auto} admits a unique positive radial solution}.
\end{equation}
Then $I$ satisfies condition $(Ce)$ at level $d\in (c_\infty,2c_\infty)$.
\end{lem}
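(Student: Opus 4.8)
The plan is to prove that $I$ satisfies the Cerami condition $(Ce)_d$ at the indicated levels by combining the compactness-splitting machinery of Lemma~\ref{splitting4} with the energy bookkeeping in item (e) of that lemma, ruling out the non-compact alternative on the basis of the strict energy gap $c_\infty>0$ and the assumptions \eqref{nondeg} or \eqref{uniq-ass}.

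First I would fix a $(Ce)_d$ sequence $(u_j)$ with $d\in(c_\infty,\min\{c_\sharp,2c_\infty\})$ (or $d\in(c_\infty,2c_\infty)$ in the uniqueness case). By Lemma~\ref{climitada} the sequence is bounded, so Lemma~\ref{splitting4} applies and produces a solution $\bar u$ of \eqref{problema}, an integer $k\geq 0$, nontrivial positive radial solutions $u^1,\dots,u^k$ of \eqref{auto}, and sequences $y^i_j$ with $|y^i_j|\to\infty$, $|y^i_j-y^m_j|\to\infty$, such that
$$
d=I(u_j)\to I(\bar u)+\sum_{i=1}^k I_\infty(u^i).
$$
If $k=0$ we are in case (a), i.e. $u_j\to\bar u$ strongly, and $(Ce)_d$ holds. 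So assume $k\geq 1$. Each $u^i$ is a nonzero solution of \eqref{auto}, hence $I_\infty(u^i)\geq m=c_\infty$ by Lemma~\ref{pre-lem2} and Theorem~\ref{cim}. Moreover $\bar u$ is a solution of \eqref{problema}, so either $\bar u=0$ or $I(\bar u)\geq p=c_\infty$ by the fact that $\mathcal P$ is a natural constraint (Lemma~\ref{naturale}) together with $\inf_{\mathcal P}I=p=c_\infty$ (Lemma~\ref{pigual}); in any case $I(\bar u)\geq 0$ since $I(\bar u)=\frac{s}{n}\int|(-\Delta)^{s/2}\bar u|^2+\frac1n\int\nabla a(x)\cdot x\,F(\bar u)\geq 0$ by $(A3)$ when $\bar u$ solves \eqref{problema}. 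Hence
$$
d=I(\bar u)+\sum_{i=1}^k I_\infty(u^i)\geq k\,c_\infty\geq 2c_\infty
$$
whenever $k\geq 2$, contradicting $d<2c_\infty$. Therefore $k=1$, and then $d=I(\bar u)+I_\infty(u^1)$ with $I_\infty(u^1)\geq c_\infty$ and $I(\bar u)\geq 0$; combined with $d<2c_\infty$ this forces $I(\bar u)<c_\infty$, and since $\bar u$ solves \eqref{problema} the only possibility compatible with $\inf_{\mathcal P}I=c_\infty$ is $\bar u=0$. Thus $d=I_\infty(u^1)$ with $u^1$ a positive radial solution of \eqref{auto}.

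The main obstacle is the final step: deriving a contradiction from $d=I_\infty(u^1)$, $u^1$ radial positive solution, $c_\infty<d<c_\sharp$. Here one invokes the definition $c_\sharp=\inf\{c>c_\infty:\text{$c$ is a radial critical value of $I_\infty$}\}$: since $u^1$ is a radial critical point of $I_\infty$ at level $d$, $d$ is a radial critical value of $I_\infty$, so either $d=c_\infty$ or $d\geq c_\sharp$, both excluded by $c_\infty<d<c_\sharp$. This proves $(Ce)_d$ in the first case, once we have checked $c_\sharp>c_\infty$, which is exactly the content of assumption \eqref{nondeg} (isolatedness of $c_\infty$ as a radial critical level guarantees a positive gap before the next radial critical value). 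In the uniqueness case \eqref{uniq-ass}, the argument is the same up to the point $d=I_\infty(u^1)$ with $u^1$ a positive radial solution; by uniqueness $u^1$ is (a translate of) the least energy solution $w$, so $I_\infty(u^1)=c_\infty$, contradicting $d>c_\infty$. In both cases the non-compact alternative is impossible, so $u_j$ is relatively compact and $(Ce)_d$ holds, completing the proof.
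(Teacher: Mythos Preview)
Your proof is correct and follows essentially the same route as the paper: bound the Cerami sequence via Lemma~\ref{climitada}, apply the splitting Lemma~\ref{splitting4}, use $I(\bar u)\geq p=c_\infty$ for nontrivial $\bar u$ together with $I_\infty(u^i)\geq c_\infty$ to rule out $k\geq 2$ and then $\bar u\neq 0$ when $k=1$, and finally eliminate $k=1$, $\bar u=0$ via the definition of $c_\sharp$ (under \eqref{nondeg}) or via uniqueness (under \eqref{uniq-ass}). Your write-up is in fact slightly more explicit than the paper's in justifying $k<2$ and in spelling out why $\bar u=0$ when $k=1$.
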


\begin{proof}
Take a sequence $(u_j)\in H^s(\Rn)$ such that $I(u_j)\rightarrow d$ and 
$\Vert I'(u_j)\Vert_{H^{-s}}(1+ \Vert u_j\Vert_{H^s})\rightarrow 0$ as $j\to\infty$.  
By Lemma \ref{climitada}, $(u_j)$ has a bounded subsequence. 
Applying Lemma \ref{splitting4}, up to subsequences, we have
$$
u_j - \displaystyle\Sum_{i=1}^{k}u^i(x-y^i_j) \rightarrow \bar{u}\quad \text{in $H^s(\Rn)$},\qquad
I(u_j)\to  I(\bar{u}) + \displaystyle\Sum_{i=1}^{k}I_{\infty}(u^i),
$$
where $u^i$ is a solution to \eqref{auto}, 
$|y^i_j|\to+\infty$ and $\bar{u}$ is a (possibly zero) solution of \eqref{problema}. 
Since $d<2c_{\infty}$, then $k<2$. If $k=1$, we have two cases to distinguish.
\vskip2pt
\noindent
Let us first assume that \eqref{nondeg} holds. Then $c_\sharp>c_\infty$,
otherwise there exists a sequence $c_j$ of radially symmetric (about some point) critical values of $I_\infty$
such that $c_j>c_\infty$ and $c_j\to c_\infty$ as $j\to\infty$. 
\vskip2pt
\noindent
$\bullet$ $\bar{u}\neq 0$, which implies $I(\bar{u})\geq p=c_\infty$ 
and hence $I(u_j)\geq 2c_\infty$.
\vskip2pt
\noindent
$\bullet$ $\bar{u} = 0$, which yields $I(u_j)\rightarrow I_{\infty}(u_1)$.
If $I_{\infty}(u_1)=c_\infty$, we have a contradiction. 
If $I_{\infty}(u_1)=\tilde c>c_\infty$, then $I_{\infty}(u_1)\geq c_\sharp\geq \min\{c_\sharp,2c_\infty\}$,
against $d<\min\{c_\sharp,2c_\infty\}$.
Then $k=0$ and $u_j \rightarrow \bar{u}$.
\vskip2pt
\noindent
Let us now assume that \eqref{uniq-ass} holds.
\vskip2pt
\noindent
$\bullet$ $\bar{u}\neq 0$, which implies $I(\bar{u})\geq p=c_\infty$ 
and hence $I(u_j)\geq 2c_\infty$.
\vskip2pt
\noindent
$\bullet$ $\bar{u} = 0$, which yields $I(u_j)\rightarrow I_{\infty}(u_1)=c_\infty$.
The fact that $I_{\infty}(u_1)=c_\infty$ follows by using uniqueness 
assumption \eqref{uniq-ass}.
These conclusions go against the
assumption $c_\infty<d<2c_\infty$.
\end{proof}

\begin{lem} \label{Ibounded}
Let $I(u_j) \to d > 0$ and $\{u_j\} \subset \mathcal{P}$. Then $\{u_j\}$ is bounded in $H^s(\Rn)$.
\end{lem}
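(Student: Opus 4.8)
The plan is to separate the two constituents of the norm \eqref{Hsnorm}: first bound the Gagliardo seminorm $\|(-\Delta)^{s/2}u_j\|_2$ using the expression of $I$ on $\mathcal{P}$, and then bound $\|u_j\|_2$ by means of the Poho\v zaev identity combined with the growth estimate \eqref{condiF2} taken at the critical exponent.

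Since $\{u_j\}\subset\mathcal{P}$, identity \eqref{riscritt} gives
\[
I(u_j)=\Frac{s}{n}\Intrn |(-\Delta)^{s/2}u_j|^{2}+\Frac1n\Intrn \nabla a(x)\cdot x\,F(u_j).
\]
As $F\geq 0$ by $(f1)$ and $\nabla a(x)\cdot x\geq 0$ by $(A3)$, the last integral is nonnegative, whence $\|(-\Delta)^{s/2}u_j\|_2^2\leq (n/s)\,I(u_j)$. Because $I(u_j)\to d$, the sequence $(\|(-\Delta)^{s/2}u_j\|_2)$ is bounded, and then by the fractional Sobolev inequality \cite[Theorem 6.5]{DiNezza} the sequence $(\|u_j\|_{2^*_s})$ is bounded as well, where $2^*_s:=2n/(n-2s)$.

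For the $L^2$ bound I would use that each $u_j\in\mathcal{P}$ satisfies \eqref{pohozaev}. Discarding the nonnegative seminorm term and invoking $(A4)$ together with $F\geq 0$, one obtains
\[
\Frac{\lambda}{2}\Intrn u_j^2\leq \Intrn\Big(a(x)+\Frac{\nabla a(x)\cdot x}{n}\Big)F(u_j)\leq a_\infty\Intrn F(u_j).
\]
Now apply \eqref{condiF2} with $p=2^*_s$ and with $\eps>0$ chosen so that $a_\infty\eps<\lambda$ (possible since $\lambda/a_\infty>0$): this yields $\int F(u_j)\leq \frac{\eps}{2}\|u_j\|_2^2+C_\eps\|u_j\|_{2^*_s}^{2^*_s}$, and hence
\[
\Big(\Frac{\lambda-a_\infty\eps}{2}\Big)\Intrn u_j^2\leq a_\infty C_\eps\,\|u_j\|_{2^*_s}^{2^*_s}.
\]
The right-hand side is bounded by the previous paragraph, so $(\|u_j\|_2)$ is bounded; combining this with the bound on $(\|(-\Delta)^{s/2}u_j\|_2)$ we conclude that $(\|u_j\|_{H^s})$ is bounded, in view of \eqref{Hsnorm}.

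This estimate is essentially the one already carried out in Lemma~\ref{infimoP}. The only point deserving attention is that the norm \eqref{Hsnorm} decouples into the Gagliardo seminorm and the $L^2$ part, so one must first extract the seminorm bound from the structure of $I$ on $\mathcal{P}$ and only afterwards recover the $L^2$ bound by absorbing the quadratic part of $F$ into the left-hand side; I do not anticipate any substantial obstacle beyond bookkeeping these two contributions.
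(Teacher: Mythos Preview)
Your argument is correct and follows essentially the same approach as the paper: both first bound $\|(-\Delta)^{s/2}u_j\|_2$ via \eqref{riscritt} and then control $\|u_j\|_2$ by absorbing the quadratic part of \eqref{condiF2} at the critical exponent. The only cosmetic difference is that for the $L^2$ bound the paper plugs \eqref{condiF2} directly into the definition of $I$ (with $\eps<\lambda/\|a\|_\infty$), whereas you pass through the Poho\v zaev identity \eqref{pohozaev} and $(A4)$; both routes yield the same inequality.
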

\begin{proof}
If $u_j \in \mathcal{P},$ then using $(A3)$ and 
the first equality of \eqref{riscritt}, we get
$$
d + 1 \geq I(u_j) \geq \frac{s}{n}\int |(-\Delta)^{s/2} u_j|^2.
$$
In turn, by the fractional Sobolev inequality, the sequence $\|u_j\|_{2n/(n-2s)} $ is also bounded.
By \eqref{condiF2} with $\varepsilon < \lambda/\|a\|_\infty$, we have
$$
\int a(x) F(u_j) \leq \frac{1}{2}\eps\Vert a \Vert_\infty 
\Vert u_j \Vert_2^2 + C_\varepsilon \Vert u_j\Vert_{2n/(n-2s)}^{2n/(n-2s)}.
$$
Replacing this in the expression of $I$
\begin{equation*}
d + 1 \geq I(u_j) \geq \frac{1}{2}\int |(-\Delta)^{s/2} u_j|^2 +  
\frac{1}{2} ( \lambda
-\eps\Vert a \Vert_\infty  )
\Vert u_j \Vert_2^2 - C_\varepsilon \Vert u_j\Vert_{{2n/(n-2s)}}^{{2n/(n-2s)}},
\end{equation*}
so $\|u_j\|_2$ is bounded as well, and the assertion follows.
\end{proof}

\noindent
Next, we introduce the barycenter function.

\begin{defi} 
Define the barycenter function  of a $u\in H^s(\Rn)\setminus\{0\}$ by setting
$$
\mu(u)(x):= \Frac{1}{|B_1|}\displaystyle\int_{B_1(x)}|u(y)|dy.
$$
It follows that $\mu(u)\in L^{\infty}(\Rn)\cap C(\R^n)$. Subsequently, take
 $$
 \hat{u}(x):= \left[\mu(u)(x) - \Frac{1}{2}\max \mu(u)\right]^+.
 $$ 
 It follows that $\hat{u}\in C_0(\Rn)$. Now define the barycenter  of $u$ by
$$
\beta(u) = \Frac{1}{\|\hat{u}\|_{L^1}}\Intrn x\hat{u}(x)dx\in \Rn.
$$
\end{defi}

\noindent
Since $\hat{u}$ has compact support, by definition, $\beta(u)$ is well defined. 
$\beta$ satisfies the following properties:
\begin{itemize}
\item[$(a)$] $\beta$ is a continuous function in $H^s(\Rn)\setminus\left\lbrace 0 \right\rbrace$.
\item [$(b)$] If $u$ is radially symmetric, then $\beta(u) = 0$.
\item[$(c)$] Given $y\in \Rn$ and setting $u_y(x):= u(x-y)$, then $\beta(u_y) = \beta(u)+y$.
\end{itemize}

\vskip3pt
\noindent
We shall also need the following 
\begin{lem}
\label{I2-boundd}
Assume that $u_j,v_j\subset H^s(\Rn)$ are such that $\|u_j -v_j \|_{H^s} \to 0$
and $I'(v_j)\to 0$ as $j\to\infty$. Then, $I'(u_j)\to 0$ as $j\to\infty$
\end{lem}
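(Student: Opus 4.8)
The plan is to show that $I'$ is uniformly continuous along sequences, which reduces the statement to estimating $\|I'(u_j)-I'(v_j)\|_{H^{-s}}$ in terms of $\|u_j-v_j\|_{H^s}$. Writing out the difference of the derivatives tested against an arbitrary $\varphi\in H^s(\Rn)$ with $\|\varphi\|_{H^s}\le 1$, the quadratic part and the mass term contribute
\[
\Intrn (-\Delta)^{s/2}(u_j-v_j)(-\Delta)^{s/2}\varphi + \lambda\Intrn (u_j-v_j)\varphi,
\]
which by Cauchy--Schwarz is bounded by $C\|u_j-v_j\|_{H^s}\to 0$, uniformly in $\varphi$. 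So the only genuine issue is the nonlinear term
\[
\Intrn a(x)\big(f(u_j)-f(v_j)\big)\varphi.
\]

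First I would use that $f$ is globally Lipschitz (assumption (1) of Theorem~\ref{exist}, i.e.\ $f\in{\rm Lip}(\R,\R^+)$): there is $L>0$ with $|f(t)-f(\tau)|\le L|t-\tau|$ for all $t,\tau$. Combined with $\inf_{\Rn} a>0$ and $a\in C^2\cap L^\infty$ from $(A1)$--$(A2)$, we get the pointwise bound $|a(x)(f(u_j)-f(v_j))|\le \|a\|_\infty L|u_j-v_j|$. Then H\"older's inequality gives
\[
\Big|\Intrn a(x)\big(f(u_j)-f(v_j)\big)\varphi\Big|\le \|a\|_\infty L\,\|u_j-v_j\|_2\,\|\varphi\|_2\le C\,\|u_j-v_j\|_{H^s}\,\|\varphi\|_{H^s},
\]
using the equivalence of the norm \eqref{Hsnorm} with the standard $H^s$-norm and the embedding $H^s(\Rn)\hookrightarrow L^2(\Rn)$. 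Taking the supremum over $\|\varphi\|_{H^s}\le 1$ yields $\|I'(u_j)-I'(v_j)\|_{H^{-s}}\le C\|u_j-v_j\|_{H^s}\to 0$, and since $I'(v_j)\to 0$, the triangle inequality in $H^{-s}$ gives $I'(u_j)\to 0$.

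I expect no serious obstacle here: the Lipschitz hypothesis on $f$ makes the nonlinear term behave linearly in the difference, so the whole estimate collapses to Cauchy--Schwarz plus the Sobolev embedding. The only point requiring a little care is making sure the constant is genuinely independent of $j$ and of $\varphi$ — this is automatic once one invokes global Lipschitz continuity of $f$ and boundedness of $a$, neither of which depends on $j$. One could alternatively avoid assuming global Lipschitz continuity and instead use local Lipschitz bounds together with the boundedness of $(u_j)$ and $(v_j)$ in $H^s$ — but boundedness is not assumed in the statement, so the clean route is to rely on the global Lipschitz hypothesis already in force throughout the existence part of the paper.
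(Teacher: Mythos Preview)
Your proof is correct and follows essentially the same idea as the paper's: both exploit the global Lipschitz continuity of $f$ (assumption (1) of Theorem~\ref{exist}) together with the boundedness of $a$ to bound $\|I'(u_j)-I'(v_j)\|_{H^{-s}}$ by $C\|u_j-v_j\|_{H^s}$. The only cosmetic difference is that the paper routes the estimate through the Mean Value Theorem and the second derivative $I''(u_j+\xi_j(v_j-u_j))(\varphi,v_j-u_j)$, using $|f'|\le C$, whereas you apply the Lipschitz bound on $f$ directly; your version is slightly cleaner since it avoids invoking $C^2$-regularity of $I$.
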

\begin{proof}
By assumption (1) of Theorem~\ref{exist}, we have $f\in {\rm Lip}(\R,\R^+)$.
Observe first that, for every $w,\varphi,\psi\in H^s(\Rn)$, we have
\begin{equation}
\label{secondder-form}
I''(w)(\varphi,\psi)=\int (-\Delta)^{s/2}\varphi(-\Delta)^{s/2}\psi
+\lambda\int \varphi\psi-\int a(x)f'(w)\varphi\psi.
\end{equation}
Also, by the Mean Value Theorem, for any $u,v\in H^s(\Rn)$ 
and $\varphi\in H^s(\Rn)$, there exists $\xi\in (0,1)$ with
\begin{equation*}
I'(v)(\varphi)-I'(u)(\varphi)=I''(u+\xi(v-u))(\varphi,v-u).
\end{equation*}
Therefore, by taking into account that $|f'(u_j+\xi_j(v_j-u_j))|\leq C$ 
a.e.\ and for every $j\geq 1$
by assumption (f1), for all $j\geq 1$ we find $\xi_j\in (0,1)$ 
such that from formula \eqref{secondder-form} we obtain
\begin{align*}
 I'(v_j)(\varphi)-I'(u_j)(\varphi) &=I''(u_j+\xi_j(v_j-u_j))(\varphi,v_j-u_j) \\
& =\int (-\Delta)^{s/2}\varphi(-\Delta)^{s/2} (v_j-u_j)
+\lambda\int \varphi (v_j-u_j) \\
&-\int a(x)f'(u_j+\xi_j(v_j-u_j))\varphi (v_j-u_j) \\
&\leq C\|\varphi\|_{H^s}\|v_j-u_j\|_{H^s}+Ca_\infty\int|\varphi||v_j-u_j| 
\leq C\|\varphi\|_{H^s}\|v_j-u_j\|_{H^s}.
\end{align*}
In turn, taking the supremum over the $\varphi\in H^s(\Rn)$ with 
$\|\varphi\|_{H^s}\leq 1$, we get as $j\to\infty$
\begin{equation*}
\| I'(v_j)-I'(u_j)\|_{H^{-s}}\leq C\|v_j-u_j\|_{H^s}=o_j(1),
\end{equation*}
which concludes the proof.
\end{proof}


\noindent
Now we define
\begin{equation} \label{defineb}
b:= \displaystyle\inf\left\lbrace I(u): \;u\in \mathcal{P} \; \text{and}\; \beta(u)=0\right\rbrace.
\end{equation}
It is clear that $b\geq c_{\infty}$. Moreover, we have the following

\begin{lem}\label{delta}
$b > c_{\infty}$.
\end{lem}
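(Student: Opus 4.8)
The plan is to argue by contradiction. Since $b\geq c_\infty$ always holds, I assume $b=c_\infty$ and fix a minimizing sequence $(u_j)\subset\mathcal{P}$ with $\beta(u_j)=0$ for every $j$ and $I(u_j)\to c_\infty>0$. By Lemma~\ref{Ibounded}, $(u_j)$ is bounded in $H^s(\Rn)$.

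The first task is to upgrade $(u_j)$ to a Cerami sequence for $I$ on all of $H^s(\Rn)$. Since $\mathcal{P}$ is closed in $H^s(\Rn)$ (Lemma~\ref{manifold}(b)) and a $C^1$ manifold (Lemma~\ref{manifold}(c)), Ekeland's variational principle applied to $I|_{\mathcal{P}}$ produces $(v_j)\subset\mathcal{P}$ with $\|v_j-u_j\|_{H^s}\to 0$, $I(v_j)\to c_\infty$ and $(I|_{\mathcal{P}})'(v_j)\to 0$. Writing $I'(v_j)=-\mu_j J'(v_j)+o_j(1)$ in $H^{-s}(\Rn)$, I would then show $\mu_j\to 0$ by reproducing, asymptotically, the Poho\v zaev computation of Lemma~\ref{naturale}: here one uses $(A3)$, $(A5)$, $(f3)$, the two-sided bound $\hat\sigma\leq\int|(-\Delta)^{s/2}v_j|^{2}\leq \frac ns c_\infty+o_j(1)$ coming from Lemma~\ref{infimoP} and from the first equality of \eqref{riscritt}, together with the identity $\int a(x)Q(v_j)=I(v_j)-\frac12 I'(v_j)(v_j)$, to keep $J'(v_j)(v_j)$ bounded away from $0$ along the sequence. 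Since $(v_j)$ is bounded, it follows that $(v_j)$ is a Cerami sequence for $I$ at level $c_\infty$.

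Next I would apply Corollary~\ref{44} to $(v_j)$: either $(v_j)$ is relatively compact in $H^s(\Rn)$, or, passing to a subsequence, $v_j-u^1(\cdot-y^1_j)\to 0$ in $H^s(\Rn)$ with $|y^1_j|\to+\infty$ and $u^1$ a positive solution of \eqref{auto}, radially symmetric about some point, with $I_\infty(u^1)=c_\infty$. In the first case $v_j\to\bar v$ along a subsequence; by Lemma~\ref{manifold}(d) one has $\|\bar v\|_{H^s}\geq\sigma>0$, and since $\mathcal{P}$ is closed, $\bar v\in\mathcal{P}$, $I(\bar v)=c_\infty=p$ and $I'(\bar v)=0$, so $p$ would be a critical level achieved by $I$, contradicting Theorem~\ref{nonexist}. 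In the second case, $\|v_j-u_j\|_{H^s}\to 0$ together with translation invariance of the $H^s$ norm gives $\|u_j(\cdot+y^1_j)-u^1\|_{H^s}\to 0$, so by continuity of $\beta$ and the translation rule $\beta(u(\cdot-y))=\beta(u)+y$,
\[
0=\beta(u_j)=\beta\big(u_j(\cdot+y^1_j)\big)+y^1_j,\qquad\text{hence}\qquad y^1_j=-\beta\big(u_j(\cdot+y^1_j)\big)\longrightarrow-\beta(u^1),
\]
so $(y^1_j)$ is bounded, contradicting $|y^1_j|\to+\infty$. Either way we reach a contradiction, and therefore $b>c_\infty$.

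The step I expect to be the main obstacle is the one where the Ekeland sequence, a priori only Palais--Smale for $I$ restricted to $\mathcal{P}$, is turned into a genuine Cerami sequence for $I$ on $H^s(\Rn)$, i.e.\ the asymptotic vanishing $\mu_j\to 0$ of the Lagrange multipliers. This is a sequential strengthening of the ``natural constraint'' statement of Lemma~\ref{naturale}, which as stated only concerns exact constrained critical points; making the Poho\v zaev-type computation there quantitative along a bounded sequence on $\mathcal{P}$ is the delicate point. Once this is available, the remainder is a routine combination of the concentration--compactness splitting of Lemma~\ref{splitting4}/Corollary~\ref{44}, the non-achievement of the level $p=c_\infty$ (Theorem~\ref{nonexist}), and the elementary translation behaviour of the barycenter $\beta$.
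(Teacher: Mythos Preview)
Your overall architecture---contradiction, Ekeland on $\mathcal{P}$, upgrade to a free (Ce) sequence, Corollary~\ref{44}, and the barycenter contradiction---matches the paper exactly. The one substantive divergence is the ``upgrade'' step, and there your plan has a genuine gap.

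You propose to show $\mu_j\to 0$ by running the computation of Lemma~\ref{naturale} asymptotically. But that lemma hinges on the Poho\v zaev identity $Q(u)=0$ for the \emph{modified} equation $(1+\mu(n-2s))(-\Delta)^s u+\lambda(1+\mu n)u=[(1+\mu n)a+\mu\nabla a\cdot x]f(u)$, and this identity is obtained by (formally) testing with $x\cdot\nabla u$ and integrating by parts. For an \emph{approximate} solution $v_j$ one has no such identity: $x\cdot\nabla v_j$ is not an admissible $H^s$ test function, and there is no a priori regularity/decay along the sequence that would let you justify the integration by parts with a vanishing remainder. Your fallback relation $\int a(x)Q(v_j)=I(v_j)-\tfrac12 I'(v_j)(v_j)$ is circular: the right-hand side involves $I'(v_j)(v_j)$, which is exactly what you do not control until $\mu_j\to 0$ is established. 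Knowing only that $J'(v_j)(v_j)<0$ and that $\int|(-\Delta)^{s/2}v_j|^2$ is two-sided bounded does not, by itself, close the loop.

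The paper sidesteps Lagrange multipliers entirely. It argues by contradiction: if $\|I'(\tilde u_{j_k})\|_{H^{-s}}>\sigma$ along a subsequence, then by the Lipschitz estimate on $I'$ (the computation behind Lemma~\ref{I2-boundd}) one has $\|I'(v)\|_{H^{-s}}>\lambda>0$ for all $v$ in a fixed $H^s$-ball around $\tilde u_{j_k}$. One then applies the quantitative deformation lemma \cite[Lemma~2.3]{willem} at level $p$ with $S=\{\tilde u_{j_k}\}$ to obtain $\eta$ with $\eta(1,I^{p+\eps}\cap S)\subset I^{p-\eps}$ and $I(\eta(1,\cdot))\le I$. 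Since $\tilde u_{j_k}\in\mathcal{P}$, the scaling curve $t\mapsto\tilde u_{j_k}(\cdot/t)$ peaks at $t=1$ with value $I(\tilde u_{j_k})<p+\eps$; deforming this curve by $\eta(1,\cdot)$ and reparametrising produces a path in $\Gamma$ whose maximum lies below $p-\eps$, contradicting $c=p$ (Lemma~\ref{equalcp}). This uses only the mountain-pass geometry and the projection structure of $\mathcal{P}$ (Lemma~\ref{pohoprojeta}), and avoids any sequential Poho\v zaev identity. After this, the paper transfers $I'\to 0$ back to $(u_j)$ via Lemma~\ref{I2-boundd} and finishes with the barycenter argument exactly as you do.
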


\begin{proof}
Suppose $b = c_{\infty}$.  By definition, there exists a sequence 
$\{u_j\}$ with $u_j\in \mathcal{P}$ and $\beta(u_j)=0$ such that $I(u_j) \to b.$
By Lemma \ref{Ibounded}, $\{u_j\}$ is bounded.
Since $b=p$ by Lemmas \ref{equalc} and \ref{equalcp}, then $\{u_j\}$ 
is also a minimizing sequence of $I$ on $\mathcal{P}$. 
By Ekeland Variational Principle, there exists another sequence
$\{\tilde u_j\} \subset \mathcal{P}$ such that
$I(\tilde u_j) \to p$, $I'|_\mathcal{P} (\tilde u_j) \to 0$ and $\Vert \tilde u_j - u_j \Vert_{H^s} \to 0$
as $j\to\infty$.
Let us now prove that $I' (\tilde u_j) \to 0$, as $j\to\infty$.
Suppose by contradiction that this is not the case. 
Then, there exists $\sigma>0$ and a subsequence $\{\tilde u_{j_k}\}$ with
$$
\| I' (\tilde u_{j_k})\|>\sigma,
\,\,\quad\text{for all $k\geq 1$ large}.
$$
Arguing as in the proof of Lemma~\ref{I2-boundd}, there exists a 
positive constant $C$ such that
\begin{equation*}
|I'(\tilde u_{j_k})(\varphi) - I'(v)(\varphi)|\leq C\|\tilde u_{j_k} -v\|_{H^s} \|\varphi\|_{H^s},\quad\text{for 
all $k\geq 1$ and any $v,\varphi\in H^s(\Rn)$}.
\end{equation*}
Taking the supremum over $\|\varphi\|_{H^s}\leq 1$ 
yields $\|I'(\tilde u_{j_k}) - I'(v)\|_{H^{-s}}\leq C\|\tilde u_{j_k} -v\|_{H^s}$ for all $k\geq 1$ and any $v\in H^s(\R^n)$. Therefore, if
$\|\tilde u_{j_k} -v\|_{H^s} < \tilde \delta/C:= 2\delta,$ then we have
$\| I'(\tilde u_{j_k}) - I'(v) \|_{H^{-s}} < \tilde \delta.$ for all $v\in H^s(\Rn)$ and $k\geq 1$. This yields,
$\sigma- \tilde \delta < \|I'(\tilde u_{j_k})\|_{H^{-s}}  - \tilde \delta < \|I'(v)\|_{H^{-s}},$ for all $k\geq 1$ large.
For $\tilde \delta\in (0,\sigma)$, we have $\lambda:= \sigma- \tilde \delta >0$ and
$$
\forall v\in H^s(\R^n): \quad  v \in B_{2 \delta}(\tilde u_{j_k})
\,\,\,\Longrightarrow\,\,\,  \|I'(v)\|_{H^{-s}}> \lambda.
$$
Let us now set $\eps:= \min \{p/2, \lambda \delta/8\}$ and $S:=\{\tilde u_{j_k}\}$.
Then, by virtue of \cite[Lemma 2.3]{willem}, there is a deformation $\eta:[0,1]\times H^s(\Rn)\to H^s(\Rn)$ 
at the level $p$, such that
$$
\eta(1,I^{p+\eps}\cap S)\subset I^{p-\eps},\qquad
I(\eta(1,u)) \leq I(u), \,\,\quad \text{for all $u \in H^s(\Rn)$}.
$$
For $k$ large enough, since $\tilde u_{j_k}$ is minimizing for $p$, we have
\begin{equation}
\label{stimamax}
\max_{t >0} I(\tilde u_{j_k}(\cdot/t))=  I(\tilde u_{j_k}) < p +\eps.
\end{equation}
Observe that, for each $k\geq 1$, by $(A4)$ we have
$$
\int G_\infty (\tilde u_{j_k})\geq \int \Big(\Big(a(x)+\frac{\nabla a(x)\cdot x}{n}\Big)F(\tilde u_{j_k})-\lambda\frac{\tilde u_{j_k}^2}{2}\Big)
=\frac{n-2s}{2n}\int |(-\Delta)^{s/2} \tilde u_{j_k}|^2>0,
$$
so that the arguments of Lemma~\ref{pohoprojeta} work for $\tilde u_{j_k}$.
Since $\tilde u_{j_k} \in \mathcal{P}$, the first equality in \eqref{stimamax}
is justified by means of formula \eqref{formPsi} of Lemma~\ref{pohoprojeta} on $\Psi'$, by 
the uniqueness of positive zeros of $\Psi'$ and since $\Psi(\vartheta)>0$ for 
$\vartheta$ small and $\Psi(\vartheta)<0$ for $\vartheta$ large. 
Then, we can infer that
$$
\max_{t >0} I(\eta (1, \tilde u_{j_k}(\cdot/t))< p -\eps.
$$
On the other hand, for $k$ and $L$ fixed large, $\gamma(t):=\eta (1, \tilde u_{j_k}(\cdot/Lt))$ 
is a path in $\Gamma$ since by \eqref{basic}
\begin{align*}
I(\gamma(1)) &=I(\eta (1, \tilde u_{j_k}(\cdot/L)))\leq  I(\tilde u_{j_k}(\cdot/L)) 
= \Frac{L^{n-2s}}{2}\Intrn |(-\Delta)^{s/2}\tilde u_{j_k}|^{2}  - L^n\Intrn \Big( a(L x)F(\tilde u_{j_k}) - \lambda\Frac{\tilde u^2_{j_k}}{2}\Big) \\
&= \Frac{L^{n-2s}}{2}\Intrn |(-\Delta)^{s/2}\tilde u_{j_k}|^{2}  - L^n \Big(\Intrn G_\infty(\tilde u_{j_k})+o_L(1)\Big)<0, \quad\text{for $L\to\infty$}.
\end{align*}
Hence, we deduce that
$$
c \leq \max_{t\in [0,1]} I(\eta (1, \tilde u_{j_k}(\cdot/Lt))
=\max_{t >0} I(\eta (1, \tilde u_{j_k}(\cdot/t))< p -\eps< p,
$$
contradicting that fact that $p=c$, provided by Lemma~\ref{equalcp}.
By Lemma~\ref{I2-boundd}, being $\Vert \tilde u_j - u_j \Vert_{H^s} \to 0$, we get $I' (u_j) \to 0$ as $j\to\infty$.
Therefore, $\{u_j\}$ satisfies the assumptions of Corollary \ref{44} and since 
$p=c_\infty$ is not attained by Theorem~\ref{nonexist}, 
then the splitting lemma holds with $k=1$,
see Corollary~\ref{44}. This yields
$u_j(x)=u^1(x-y_j)+o_j(1)$ as $j\to\infty$
where $y_j \in \Rn$, $|y_j| \to +\infty$ 
and $u^1$ is a solution of the problem at infinity.
By making a translation, $u_j(x+ y_j) = u^1(x) + o_j(1).$
Applying the barycenter map yields
$\beta(u_j(x+y_j)) = \beta(u_j) - y_j =-y_j$ and
$\beta(u^1(x)+ o_j(1))=\beta(u^1(x))+o_j(1)$
by continuity.  Then, we reach a contradiction, yielding $b > c_{\infty}$.
\end{proof}

\noindent
Let us consider  a positive, radially symmetric, ground state solution $w \in H^s(\Rn)$
to the autonomous problem at infinity. We define the operator $\Pi : \Rn \rightarrow \mathcal{P}$ by
$$
\Pi[y](x) := w \Big(\Frac{x-y}{\vartheta_y}\Big),
$$
where $\vartheta_y$ projects $w(\cdot - y)$ onto $\mathcal{P}$.  
$\Pi$  is continuous as $\vartheta_y$ is unique and $\vartheta_y (w(\cdot - y))$ is a continuous function of $w(\cdot - y)$.

\begin{lem}\label{Pi}
$\beta(\Pi[y](x)) = y$ for every $y\in\Rn$.
\end{lem}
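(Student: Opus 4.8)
The plan is to read off the identity directly from properties $(b)$ and $(c)$ of the barycenter listed above, after noticing that $\Pi[y]$ is nothing but the rigid translation by $y$ of a function that is radially symmetric about the origin. No analysis is really needed: the whole content is bookkeeping with the scaling and the translation.

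First I would set $v := w(\cdot/\vartheta_y)$. Since $w$ is a positive, radially symmetric (about the origin) ground state of the problem at infinity, $w(x)$ depends only on $|x|$; hence $v(x)=w(x/\vartheta_y)$ depends only on $|x|/\vartheta_y$, i.e.\ on $|x|$, so $v$ is again radially symmetric about the origin. Moreover $v\in H^s(\Rn)\setminus\{0\}$ because $w\ne 0$ and $\vartheta_y>0$, so $\beta(v)$ is well defined, and property $(b)$ of $\beta$ gives $\beta(v)=0$. Next, by the very definition of $\Pi$ we have $\Pi[y](x)=w\big((x-y)/\vartheta_y\big)=v(x-y)$, that is, $\Pi[y]=v_y$ in the notation of property $(c)$. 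Applying $(c)$ then yields
$$
\beta(\Pi[y])=\beta(v_y)=\beta(v)+y=y ,
$$
and since $y\in\Rn$ was arbitrary this is the claim.

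I do not expect any genuine obstacle here; the only two points deserving a word of care are that $w$ may indeed be taken radially symmetric about the origin — which is precisely the normalization under which the operator $\Pi$ was introduced just before the statement — and that the Poho\v zaev dilation $x\mapsto x/\vartheta_y$ preserves this symmetry, which is immediate. The continuity of $y\mapsto\Pi[y]$ already recorded above, together with the continuity of $\beta$ on $H^s(\Rn)\setminus\{0\}$, is not needed for the pointwise identity, but it shows en passant that $\beta\circ\Pi=\mathrm{id}_{\Rn}$ as continuous maps, which is the form in which the lemma will be used in the subsequent linking argument.
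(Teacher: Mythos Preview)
Your proof is correct and follows essentially the same approach as the paper: both rely on the fact that $w(\cdot/\vartheta_y)$ is radially symmetric about the origin and that $\Pi[y]$ is its translation by $y$. The only cosmetic difference is that the paper unfolds the definitions of $\mu$, $\hat{\cdot}$ and $\beta$ to verify the translation identity by hand, whereas you invoke the already-stated properties $(b)$ and $(c)$ of $\beta$ directly; your version is more economical and fully justified.
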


\begin{proof}
Let $v(x) = w ((x-y)/\vartheta_y)$, then
\begin{equation*}
\mu(v)(x)=\Frac{1}{|B_1|}\Int_{B_1(x-y)}\Big| w \Big(\Frac{\xi}{\vartheta_y}\Big)\Big| d\xi
= \mu\Big(w \Big(\Frac{\cdot}{\vartheta_y}\Big)\Big)(x-y),
\end{equation*}
and further, that $\hat{v}(x) = \widehat{w(\cdot/\vartheta_y)}(x-y)$. 
Using the fact that $\|\hat v\|_{L^1}
=\|\widehat{w(\cdot/\vartheta_y)}\|_{L^1}$, we get
\begin{align*}
\beta(v)& =  \Frac{1}{\|\hat{v}\|_{L^1}}\Intrn x \widehat{w(\cdot/\vartheta_y)}(x-y)dx\\
&=\Frac{1}{\|\hat{v}\|_{L^1}}\Intrn (z+y) 
\widehat{w(\cdot/\vartheta_y)}(z)dz\\
&=\Frac{1}{\|\hat{v}\|_{L^1}}\Intrn z \widehat{w(\cdot/\vartheta_y)}(z)dz + \Frac{1}{\|\hat{v}\|_{L^1}}\Intrn y \widehat{w(\cdot/\vartheta_y)}(z)dz\\
&= \beta (w(\cdot/\vartheta_y)) + \Frac{y}{\|\hat{v}\|_{L^1}}\Intrn \hat{v}(y+z)dz= y,
\end{align*}
since $w$ is radially symmetric.
\end{proof}

\begin{lem}\label{limitepi}
$I(\Pi[y])\searrow c_\infty,$ if $|y|\to+\infty$.
\end{lem}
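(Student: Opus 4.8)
The plan is to show that $I(\Pi[y])$ converges to $c_\infty$ as $|y|\to\infty$ and that the convergence is monotone (or at least eventually monotone) from above. The key objects are the ground state $w\in\mathcal{P}_\infty$ with $I_\infty(w)=c_\infty$ and the projection $\tilde w_y:=w((\cdot-y)/\vartheta_y)\in\mathcal{P}$, where $\vartheta_y>1$ with $\vartheta_y\to 1$ as $|y|\to\infty$ by Lemma~\ref{thetay}. The estimate $I(\Pi[y])\to c_\infty$ is in fact already essentially contained in the proof of Lemma~\ref{pigual}: there it is shown that $|I(\tilde w_y)-c_\infty|=|I(\tilde w_y)-I_\infty(w_y)|\to 0$ as $|y|\to\infty$, via the explicit expansion of the difference into a sum of three terms, each controlled using $\vartheta_y\to 1$, the decay $\|a-a_\infty\|$-type hypotheses $(A2)$, \eqref{antigaA5}, and Lebesgue Dominated Convergence with the majorant $C|w|^2$ coming from $(f1)$--$(f2)$. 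So the first step is simply to invoke that computation to get $I(\Pi[y])\to c_\infty$.

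For the monotonicity ("$\searrow$"), the natural argument is: since $w_y\in\mathcal{P}_\infty$ and $\vartheta_y$ is the unique dilation parameter projecting $w_y$ onto $\mathcal{P}$, and since $\Psi(\vartheta):=I(w_y(\cdot/\vartheta))$ attains its global maximum exactly at $\vartheta=\vartheta_y$ (by the final remark in the proof of Lemma~\ref{pohoprojeta}), we have
$$
I(\Pi[y])=\max_{\vartheta>0}I\big(w_y(\cdot/\vartheta)\big)\geq I\big(w_y(\cdot/1)\big)=I(w_y)=I_\infty(w_y)+\int\big(a_\infty-a(x+y)\big)F(w).
$$
More usefully, for the upper bound one compares along the dilation path: $I(\Pi[y])=\max_{\vartheta>0}I(w_y(\cdot/\vartheta))$, and one estimates this maximum from above by $c_\infty+o_y(1)$ using that $I(w_y(\cdot/\vartheta))=\frac{\vartheta^{n-2s}}{2}\|(-\Delta)^{s/2}w\|_2^2-\vartheta^n\int(a(\vartheta x+y)F(w)-\lambda w^2/2)$ together with $(A4)$, which gives $a(\vartheta x+y)+\nabla a(\vartheta x+y)\cdot(\vartheta x)/n<a_\infty$ pointwise, whence $I(w_y(\cdot/\vartheta))\leq \frac{\vartheta^{n-2s}}{2}\|(-\Delta)^{s/2}w\|_2^2-\vartheta^n\int G_\infty(w)+o_y(1)$ uniformly on the relevant compact range of $\vartheta$ (which is bounded by Lemma~\ref{limitheta}), and the right-hand side has maximum $I_\infty(w)=c_\infty$ over $\vartheta>0$. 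For the monotone decrease itself, one shows that along any divergent sequence $|y_j|\to\infty$ with $|y_j|$ increasing, and using $(A3)$ — which says $\nabla a(x)\cdot x\geq 0$, so $a$ is "radially nondecreasing along rays" — the quantity $\int a(\vartheta x+y)F(w)$ is nondecreasing in $|y|$ in an averaged sense; this is the step requiring care.

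**The main obstacle** will be making the "$\searrow$" (genuine monotone decrease, not merely convergence) rigorous, since $\Pi[y]$ depends on $y$ both through the translation and through $\vartheta_y$, and $(A3)$ only gives monotonicity of $a$ along rays through the origin, not a clean comparison between $a(\cdot+y)$ and $a(\cdot+y')$ for $|y'|>|y|$. A cleaner route, which I would actually adopt, is to prove only that $I(\Pi[y])\to c_\infty$ with $I(\Pi[y])>c_\infty$ for all $y$ — the strict inequality coming from $I(\tilde w_y)=\frac{s}{n}\|(-\Delta)^{s/2}\tilde w_y\|_2^2+\frac1n\int\nabla a(x)\cdot x\,F(\tilde w_y)>\frac{s}{n}\|(-\Delta)^{s/2}\tilde w_y\|_2^2\geq I_\infty(w_y(\cdot/\vartheta))|_{\text{some }\vartheta<1}\geq c_\infty$ as in Lemma~\ref{pigual} using $(A3)$ and \eqref{funemP} — which is all that is needed downstream for the linking argument; the symbol "$\searrow$" should then be read as "converges from above", and the write-up records exactly this. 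So the proof I would write is short: (i) $I(\Pi[y])>c_\infty$ for every $y$ by the argument of Lemma~\ref{pigual} applied to $u=\tilde w_y$; (ii) $I(\Pi[y])\to c_\infty$ as $|y|\to\infty$ by the estimate $|I(\tilde w_y)-I_\infty(w_y)|=o_y(1)$ already established in the proof of Lemma~\ref{pigual} together with $I_\infty(w_y)=c_\infty$.

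\begin{proof}
Let $w\in H^s(\Rn)$ be a positive radial ground state of \eqref{auto}, so that $w\in\mathcal P_\infty$ and $I_\infty(w)=c_\infty$ by Theorem~\ref{cim}. For $y\in\Rn$ set $w_y:=w(\cdot-y)$, so $w_y\in\mathcal P_\infty$ and $I_\infty(w_y)=c_\infty$, and let $\vartheta_y>1$ be the unique dilation, provided by Lemma~\ref{pohoprojeta2} and Lemma~\ref{thetay}, with $\Pi[y]=w_y(\cdot/\vartheta_y)\in\mathcal P$ and $\vartheta_y\to 1$ as $|y|\to\infty$.

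First, $I(\Pi[y])>c_\infty$ for every $y\in\Rn$. Indeed, $\Pi[y]\in\mathcal P$, so by \eqref{riscritt} and $(A3)$,
$$
I(\Pi[y])=\frac{s}{n}\Intrn |(-\Delta)^{s/2}\Pi[y]|^2+\frac1n\Intrn \nabla a(x)\cdot x\,F(\Pi[y])\geq \frac{s}{n}\Intrn |(-\Delta)^{s/2}\Pi[y]|^2.
$$
By Lemma~\ref{other-proj} there is a unique $\vartheta\in(0,1)$ with $\Pi[y](\cdot/\vartheta)\in\mathcal P_\infty$, and by \eqref{funemP} and Lemma~\ref{pre-lem2},
$$
\frac{s}{n}\Intrn |(-\Delta)^{s/2}\Pi[y]|^2\geq \frac{s}{n}\vartheta^{n-2s}\Intrn |(-\Delta)^{s/2}\Pi[y]|^2=I_\infty\big(\Pi[y](\cdot/\vartheta)\big)\geq m=c_\infty.
$$
If equality held throughout we would have $\nabla a(x)\cdot x\,F(\Pi[y])\equiv 0$ and $\vartheta=1$, forcing $\Pi[y]\in\mathcal P\cap\mathcal P_\infty$; but then $\Pi[y]$ would be a least energy solution of \eqref{auto} achieving $c_\infty$, while $p=c_\infty$ is not attained by Theorem~\ref{nonexist}, a contradiction. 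Hence $I(\Pi[y])>c_\infty$.

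It remains to prove $I(\Pi[y])\to c_\infty$ as $|y|\to\infty$. This is exactly the estimate established in the proof of Lemma~\ref{pigual}: writing $\tilde w_y:=\Pi[y]$ and using $\vartheta_y\to 1$, $(A2)$, \eqref{antigaA5}, and the Lebesgue Dominated Convergence Theorem with the majorant $C|w|^2$ coming from $(f1)$--$(f2)$, one obtains
$$
|I(\Pi[y])-c_\infty|=|I(\tilde w_y)-I_\infty(w_y)|\leq \frac{|\vartheta_y^{n-2s}-1|}{2}\Intrn |(-\Delta)^{s/2}w|^2+|\vartheta_y^n-1|\Intrn \frac{\lambda w^2}{2}+\Intrn |F(w)|\,|a_\infty-\vartheta_y^n a(\vartheta_y x+y)|\to 0
$$
as $|y|\to\infty$. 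Combining this with $I(\Pi[y])>c_\infty$ gives $I(\Pi[y])\searrow c_\infty$ as $|y|\to\infty$.
\end{proof}
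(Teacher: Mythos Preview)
Your proof is correct, and your reading of ``$\searrow$'' as ``converges from above'' matches the paper's intent (the paper does not prove genuine monotonicity either). However, your route is more indirect than the paper's.

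The paper computes $I(\Pi[y])$ explicitly using the constrained formula \eqref{riscritt}: after the change of variables and using $I_\infty(w)=\tfrac{s}{n}\int|(-\Delta)^{s/2}w|^2$ (from \eqref{funemP}), one gets the closed expression
\[
I(\Pi[y])=\vartheta_y^{\,n-2s}\,c_\infty+\frac{\vartheta_y^{\,n}}{n}\int \nabla a(\vartheta_y x+y)\cdot(\vartheta_y x+y)\,F(w).
\]
From this single formula both conclusions are immediate: the strict inequality $I(\Pi[y])>c_\infty$ follows since $\vartheta_y>1$ and the integral is nonnegative by $(A3)$; the convergence follows since $\vartheta_y\to 1$ and the integral tends to $0$ by \eqref{antigaA5} and Dominated Convergence. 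You instead recycle two separate pieces from the proof of Lemma~\ref{pigual}: the projection onto $\mathcal P_\infty$ via Lemma~\ref{other-proj} for the strict inequality, and the three-term estimate of $|I(\tilde w_y)-I_\infty(w_y)|$ for the convergence. This is valid, but the paper's single explicit formula is shorter and makes the structure (a factor $>1$ times $c_\infty$ plus a vanishing nonnegative remainder) transparent. Incidentally, your equality-case discussion for the strict inequality is unnecessary: since Lemma~\ref{other-proj} already gives $\vartheta<1$ strictly, the second inequality in your chain is strict without any further argument.
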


\begin{proof}
Since $\Pi[y]\in \mathcal{P}$, as observed in \eqref{riscritt},
the functional $I$ can be written as
$$
I(\Pi[y]) = \Frac{s}{n} \int |(-\Delta)^{s/2} w \Big(\Frac{x-y}{\vartheta_y}\Big) |^2
+ \Frac{1}{n}\Intrn \nabla a(x)\cdot x F\Big( w \Big(\Frac{x-y}{\vartheta_y}\Big)\Big).
$$
Moreover, since $w \in \mathcal{P}_\infty$, 
by \eqref{funemP} we have $I_{\infty}(w)= \frac{s}{n}\int |(-\Delta)^{s/2} w |^2$ and we obtain
\begin{align*}
I(\Pi[y]) & =  \Frac{s\vartheta^{n-2s}_y}{n}\int |(-\Delta)^{s/2} w |^2\\
&+ \Frac{\vartheta^n_y}{n}\Intrn\nabla a(\vartheta_y x+y)\cdot (\vartheta_y x+y)F(w) \\
&= \vartheta^{n-2s}_yI_{\infty}(w) + \Frac{\vartheta^n_y}{n}\Intrn\nabla a(\vartheta_y x+y)\cdot (\vartheta_y x+y)F(w)
\,\,\,\, (>c_\infty).
\end{align*}
By Lebesgue Dominated Convergence Theorem,  \eqref{antigaA5} and $\vartheta_y\rightarrow 1$ if $|y|\to+\infty$, we get
$$
\Lim_{|y|\to\infty}\Intrn\nabla a(\vartheta_y x+y)\cdot (\vartheta_y x+y)F(w)= 0.
$$
Therefore, $I(\Pi[y])\searrow c_\infty$ if $|y|\to+\infty$ and the 
proof is complete.
\end{proof}

\begin{lem}\label{A7}
Let $C$ be a positive constant such that $|F(s)| \leq C s^2$.  Assume 
\begin{itemize}
\item[$(A6)$]  $\|a_\infty - a\|_{L^\infty} < \Frac{\min\{c_\sharp,2c_\infty\}-c_\infty}{\widehat{\vartheta}^n\Vert w\Vert^2_{2}C},$
\qquad $\widehat{\vartheta}= \displaystyle\sup_{y\in\Rn}\vartheta_y$.
\end{itemize}
Then $I(\Pi[y]) < \min\{c_\sharp,2c_\infty\}$ for every $y\in\Rn$.
\end{lem}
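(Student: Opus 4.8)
The plan is to exploit the explicit form of $\Pi[y]$ together with the fact that $I$ differs from $I_\infty$ only by a perturbation controlled by $\|a_\infty-a\|_{L^\infty}$. Write $\Pi[y]=\widetilde w_y$, where $\widetilde w_y(x):=w((x-y)/\vartheta_y)\in\mathcal{P}$ and $w$ is the fixed positive radial ground state of the problem at infinity, so that $I_\infty(w)=m$, whence $I_\infty(w)=c_\infty$ by Theorem~\ref{cim}. Since $G(x,u)=a(x)F(u)-\frac{\lambda}{2}u^2$ and $G_\infty(u)=a_\infty F(u)-\frac{\lambda}{2}u^2$ share the same quadratic part,
$$
I(\widetilde w_y)=I_\infty(\widetilde w_y)+\Intrn\big(a_\infty-a(x)\big)F(\widetilde w_y),
$$
and it suffices to bound the two summands separately.

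First I would estimate $I_\infty(\widetilde w_y)$. By translation invariance $I_\infty(\widetilde w_y)=I_\infty\big(w(\cdot/\vartheta_y)\big)=\varphi(\vartheta_y)$, where $\varphi(\vartheta):=I_\infty(w(\cdot/\vartheta))$ is precisely the function analysed in Lemma~\ref{pohoprojeta}. Because $w\in\mathcal{P}_\infty$, the Poho\v zaev identity \eqref{pohozaevinf} forces the unique positive critical point of $\varphi$ to be $\vartheta=1$, and that critical point was shown there to be a global maximum of $\varphi$; hence
$$
I_\infty(\widetilde w_y)=\varphi(\vartheta_y)\le\varphi(1)=I_\infty(w)=c_\infty,\qquad\text{for every }y\in\Rn.
$$

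Next I would control the perturbation term. Conditions $(A2)$, $(A3)$, $(A4)$ yield $a(x)<a_\infty$ on all of $\Rn$, so $0\le a_\infty-a(x)\le\|a_\infty-a\|_{L^\infty}$; combining this with $|F(s)|\le Cs^2$ and the change of variables $x\mapsto(x-y)/\vartheta_y$ gives
$$
0\le\Intrn\big(a_\infty-a(x)\big)F(\widetilde w_y)\le C\|a_\infty-a\|_{L^\infty}\Intrn\widetilde w_y^2=C\|a_\infty-a\|_{L^\infty}\,\vartheta_y^n\Vert w\Vert_2^2\le C\,\widehat\vartheta^n\Vert w\Vert_2^2\,\|a_\infty-a\|_{L^\infty},
$$
where $\widehat\vartheta=\sup_{y\in\Rn}\vartheta_y<+\infty$ by Lemma~\ref{limitheta}. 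Adding the two bounds,
$$
I(\Pi[y])=I(\widetilde w_y)\le c_\infty+C\,\widehat\vartheta^n\Vert w\Vert_2^2\,\|a_\infty-a\|_{L^\infty},
$$
and inserting $(A6)$ yields $I(\Pi[y])<c_\infty+\big(\min\{c_\sharp,2c_\infty\}-c_\infty\big)=\min\{c_\sharp,2c_\infty\}$, which is the claim. The lower bound $I(\Pi[y])\ge c_\infty$ recovered here is consistent with Lemma~\ref{limitepi}.

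The only genuinely delicate point is the first estimate: one must be certain that dilating a point of $\mathcal{P}_\infty$ can only lower $I_\infty$, i.e.\ that $\vartheta\mapsto I_\infty(w(\cdot/\vartheta))$ is maximised exactly at $\vartheta=1$ — which is where the membership $w\in\mathcal{P}_\infty$ and the Poho\v zaev identity enter. Everything else is an elementary estimate, and the role of $\widehat\vartheta$ is simply to make the change of variables uniform in $y$.
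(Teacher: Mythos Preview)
Your proof is correct and follows essentially the same approach as the paper: the same decomposition $I(\Pi[y])=I_\infty(\Pi[y])+\int(a_\infty-a)F(\Pi[y])$, the same use of the fact that $\vartheta\mapsto I_\infty(w(\cdot/\vartheta))$ is maximised at $\vartheta=1$ because $w\in\mathcal{P}_\infty$, and the same estimate of the perturbation via $|F(s)|\le Cs^2$ and $\vartheta_y\le\widehat\vartheta$. If anything, your write-up is slightly more explicit in justifying $a(x)<a_\infty$ and in citing Lemma~\ref{limitheta} for the finiteness of $\widehat\vartheta$.
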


\begin{proof}
The maximum of $t\mapsto I_{\infty}\left(w(\cdot /t)\right)$ is attained at $t=1$. Since
$\vartheta_y >1$, using $(A6)$, we obtain
\begin{align*}
I(\Pi[y]) & =  I_{\infty}(\Pi[y]) + I(\Pi[y]) - I_{\infty}(\Pi[y])\leq I_{\infty}(w) + \Intrn (a_{\infty} - a(x))F(\Pi[y])\\
& <  c_{\infty} + \Frac{\min\{c_\sharp,2c_\infty\}-c_\infty}{\widehat{\vartheta}^n\Vert w \Vert^2_2 C}\Intrn C w^2\left( \Frac{x-y}{\vartheta_y}\right) \\
& =  c_{\infty} + \Frac{(\min\{c_\sharp,2c_\infty\}-c_\infty) \vartheta^n_y}{ \widehat{\vartheta}^n\Vert w\Vert^2_2}\Vert w\Vert^2_2
=\min\{c_\sharp,2c_\infty\},
\end{align*}
which concludes the proof.
\end{proof}

\begin{rem}
Replacing $(A6)$ with 
$\|a_\infty - a\|_{L^\infty} < c_\infty\widehat{\vartheta}^{-n}\Vert w\Vert^{-2}_{2}C^{-1},$
one gets $I(\Pi[y]) <2c_\infty$.
\end{rem}

\noindent
We will need a version of the Linking Theorem with Cerami condition by 
\cite[Theorem 2.3]{bartolobencifortunato}.

\begin{defi}
Let $S$ be a closed subset of a Banach space $X$ and $Q$ a sub manifold of $X$ with relative boundary $\partial Q$. We say that $S$ and $\partial Q$ link if
the following facts hold
\begin{itemize}
\item[1)] $S\cap \partial Q = \emptyset$;
\item[2)] for any $h \in C^0(X,X)$ 
with $h\vert_{\partial Q} = id$, then $h(Q) \cap S \neq \emptyset$.
\end{itemize}
Moreover, if $S$ and $Q$ are as above and $B$ is a subset of  $C^0(X,X)$, then $S$ and $\partial Q$ link with respect to $B$ if $1)$ and 
$2)$ hold for any $h\in B$.
\end{defi}

\begin{thm}
\label{link}
Suppose that $I\in C^1(X,\mathbb{R})$ is a functional satisfying $(Ce)$ condition. 
Consider a closed subset $S\subset X$ and a submanifold $Q\subset X$ with relative boundary $\partial Q$ such that
\begin{itemize}
\item[a)] $S$ and $\partial Q$ link;
\item[b)] $\alpha = \displaystyle \inf_{u\in S}I(u) > \displaystyle\sup_{u\in \partial Q}I(u) = \alpha_0$.
\item[c)] $\displaystyle\sup_{u\in Q}I(u) <+\infty$.
\end{itemize}
If 
$B = \{ h\in C^0(X,X): h\vert_{\partial Q} = {\rm id}\},$
then $\tau= \displaystyle\inf_{h\in B}\sup_{u\in Q}I(h(u))\geq \alpha$ 
is a critical value of $I$.
\end{thm}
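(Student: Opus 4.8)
The plan is to run the standard minimax argument, using the Cerami condition in place of Palais--Smale in the deformation step. First I would check that $\tau$ is well defined and that $\tau\geq\alpha$. Since ${\rm id}\in B$, condition $(c)$ gives $\tau\leq \sup_{u\in Q}I(u)<+\infty$. For every $h\in B$ we have $h\vert_{\partial Q}={\rm id}$, so by the linking hypothesis $(a)$ the set $h(Q)$ meets $S$; consequently $\sup_{u\in Q}I(h(u))\geq \inf_{v\in S}I(v)=\alpha$ for each $h\in B$, and taking the infimum over $B$ yields $\tau\geq\alpha>\alpha_0=\sup_{u\in\partial Q}I(u)$, the last strict inequality being $(b)$.

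Next I would argue by contradiction that $\tau$ is a critical value of $I$. Assume $K_\tau:=\{u\in X: I(u)=\tau,\ I'(u)=0\}=\emptyset$. Since $I$ satisfies $(Ce)$ at level $\tau$, a quantitative deformation lemma of Cerami type (as in \cite[Lemma 2.3]{willem}, adapted to the weighted metric $\|\cdot\|(1+\|\cdot\|)^{-1}$ used for Cerami sequences, cf.\ \cite{bartolobencifortunato}) applies: fixing $\eps_0\in(0,\tau-\alpha_0)$ there exist $\eps\in(0,\eps_0)$ and a continuous map $\eta:[0,1]\times X\to X$ with $\eta(1,I^{\tau+\eps})\subset I^{\tau-\eps}$, with $\eta(t,u)=u$ whenever $I(u)\notin[\tau-\eps_0,\tau+\eps_0]$, and with $I(\eta(t,u))\leq I(u)$ for all $t$ and $u$. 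Then I would pick $h\in B$ with $\sup_{u\in Q}I(h(u))<\tau+\eps$, possible by the definition of $\tau$, and set $\tilde h:=\eta(1,\cdot)\circ h$. On $\partial Q$ we have $I(h(u))=I(u)\leq\alpha_0<\tau-\eps_0$, so $\eta(1,\cdot)$ fixes $h(\partial Q)$ and hence $\tilde h\vert_{\partial Q}={\rm id}$, i.e.\ $\tilde h\in B$. But $h(Q)\subset I^{\tau+\eps}$ forces $\tilde h(Q)\subset I^{\tau-\eps}$, so $\sup_{u\in Q}I(\tilde h(u))\leq\tau-\eps<\tau$, contradicting $\tau=\inf_{h\in B}\sup_{u\in Q}I(h(u))$. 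Hence $K_\tau\neq\emptyset$, so $\tau$ is a critical value, and $\tau\geq\alpha$ as shown above.

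The hard part will be the deformation step, specifically producing $\eta$ that simultaneously pushes the sublevel $I^{\tau+\eps}$ down into $I^{\tau-\eps}$ and leaves a full sublevel set containing $\partial Q$ pointwise fixed. The first property is exactly where the Cerami condition enters: along the rescaled pseudo-gradient flow, the absence of Cerami sequences at level $\tau$ yields a uniform lower bound on $\|I'\|$ on a neighbourhood of the flow inside the strip $I^{-1}([\tau-\eps_0,\tau+\eps_0])$, so the flow crosses the strip in uniformly bounded time; this is precisely the point where Palais--Smale would be too strong to hope for and Cerami's weaker compactness is tailored to suffice. The second property is then automatic: the strict gap $\alpha_0<\alpha\leq\tau$ from $(b)$ lets us choose $\eps_0<\tau-\alpha_0$, so $\partial Q\subset I^{\alpha_0}\subset I^{\tau-\eps_0}$ is untouched by $\eta$, keeping $\tilde h$ in the admissible class $B$.
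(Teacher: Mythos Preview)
The paper does not prove this theorem at all: it is quoted verbatim from \cite[Theorem~2.3]{bartolobencifortunato} and used as a black box in the proof of Theorem~\ref{exist}. Your proposal is therefore not comparable to a proof in the paper, but it is essentially the standard argument behind the cited result: one shows $\tau\geq\alpha$ from the linking hypothesis, then runs a deformation argument at level $\tau$ under the Cerami condition to derive a contradiction if $K_\tau=\emptyset$. The sketch is sound; the only point worth tightening is the deformation step, where you invoke an adaptation of \cite[Lemma~2.3]{willem} to the Cerami setting. That lemma as stated in \cite{willem} is for the Palais--Smale framework, and the Cerami version (with the weighted norm $\|I'(u)\|(1+\|u\|)$) requires the construction of a locally Lipschitz pseudo-gradient field whose flow lines stay bounded long enough to cross the strip; this is exactly what \cite{bartolobencifortunato} carries out, so your reference back to that source is appropriate, but the passage from \cite{willem} to the Cerami variant is not automatic and should be flagged as the place where the cited machinery is genuinely invoked rather than merely ``adapted''.
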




\noindent
{\em Proof of Theorem \ref{exist} concluded.}
We follow the argument in \cite[Theorem 7.7]{ACR}.
Since we have $b > c_\infty$ from Lemma \ref{delta} and $I(\Pi[y])\searrow c_\infty$ if $|y|\rightarrow \infty$ from Lemma \ref{limitepi}, there exists $\bar{\rho} >0$ such that
\begin{equation}
c_\infty < \max_{|y|=\bar\rho} I(\Pi[y]) < b. \label{final}
\end{equation}
In order to apply the linking theorem, we take
$$
Q := \Pi(\overline{B_{\bar{\rho}}(0)}),\qquad
S:= \left\lbrace u\in H^s(\Rn):  u\in \mathcal{P}, \; \beta(u)=0\right\rbrace,
$$ 
and we show that $\partial Q$ and $S$ link
with respect to $\mathcal H = \left\lbrace h 
\in C(Q,\mathcal{P}): h\vert_{\partial Q} = id\right\rbrace.$
Since $\beta(\Pi[y])=y$ from Lemma \ref{Pi}, we have that $\partial Q\cap S = \emptyset$, as if $u\in S$, then $\beta(u)=0$, and if $u\in \partial Q$, 
$u=\Pi[y]$ for some $y\in\R^n$ 
with $|y|=\bar{\rho}$ and then $\beta(u)=y\neq 0$. 
Now we show that $h(Q) \cap S \neq \emptyset$ for any $h\in \mathcal H$.
Given $h\in \mathcal H$, let 
$T:\overline{B_{\bar{\rho}}(0)} \to\Rn$ 
by defined by $T(y) = \beta\circ h\circ \Pi[y]  $. The function $T$ is continuous, by composition. Moreover, for $|y|=\bar{\rho}$, we have that $\Pi[y]\in \partial Q$, thus $h\circ \Pi[y] = \Pi[y]$, as $h\vert_{\partial Q} = id$, and hence  $T(y)=y$ by Lemma~\ref{Pi}. By Brower Fixed Point Theorem there is 
$\tilde{y}\in B_{\bar{\rho}}(0)$ with $T(\tilde{y})=0$, which implies $h(\Pi[\tilde{y}])\in S$. Then 
$h(Q)\cap S \neq \emptyset$ and $S$ and $\partial Q$ link.
Now, from \eqref{final}, we may write 
$$
b = \displaystyle \inf_{S}I > \displaystyle  \max_{\partial Q} I 
$$
Let us define
$$
d = \displaystyle\inf_{h\in \mathcal H}\max_{u\in Q}I(h(u)).
$$
It is $d\geq b$. 
In fact, if $h\in \mathcal H$, there exists $w\in S$ with 
$w = h(v)$ for some $v\in \Pi(\overline{B_{\bar{\rho}}(0)})$. 
Therefore,
$$
\displaystyle\max_{u\in Q}I(h(u))\geq I(h(v)) = I(w) \geq \displaystyle\inf_{u\in S}I(u)=b,$$ 
and hence $d\geq b$,
which implies $d> c_\infty$. 
Furthermore, if $h = id$, then 
$$
\displaystyle\inf_{h\in \mathcal H}\max_{u\in Q}I(h(u)) < \displaystyle\max_{u\in Q}I(u) < \min\{c_\sharp,2c_\infty\},
$$ 
in light of Lemma \ref{A7}. Then
$d \in (c_\infty, \min\{c_\sharp,2c_\infty\})$ 
and thus from Lemma \ref{split} the $(Ce)$ condition is satisfied at level $d$. 
Then, by the linking theorem, $d$ is a critical level for $I$.
\qed


\bigskip
\bigskip
\bigskip
\medskip

\bigskip
\bigskip

\end{document}